\def\abs#1{|#1|}
\def\norm#1{\|#1\|}
\newcommand{\N}{{\mathbb N}}
\newcommand{\R}{{\mathbb R}}
\newcommand{\Z}{{\mathbb Z}}
\newcommand{\ft}{{\mathcal{F}}}
\renewcommand{\leq}{\leqslant}
\renewcommand{\geq}{\geqslant}
\newcommand{\M}{{\mathcal{M}}}
\newcommand{\supp}{{\mbox{supp\,}}}
\newcommand{\eps}{\varepsilon}
\newcommand{\Real}{\mathbb R}
\def\FF{{\mathscr{F}^{-1}}}
\def\F{{\mathscr{F}}}
\numberwithin{equation}{section}
\newtheorem{theorem}{Theorem}[section]
\newtheorem{proposition}[theorem]{Proposition}
\newtheorem{lemma}[theorem]{Lemma}
\newtheorem{corollary}[theorem]{Corollary}
\newtheorem{remark}[theorem]{Remark}
\begin{document}

\pagestyle{myheadings} \markboth{H. Zhang }{GWP for derivative
4NLS}

\title{Global well-posedness and scattering for the fourth order nonlinear Schr\"{o}dinger equations with small data}

\author{\bf Hua Zhang\\
{\small \it School of Mathematical Sciences, Peking University,
Beijing 100871, P. R. China}\\
{\small E-mail: zhanghuamaths@163.com(H. Zhang)}}
\date{}
 \maketitle
\thispagestyle{empty}
{\bf Abstract:} For $n\geq 3$, we study the Cauchy problem for the
fourth order nonlinear Schr\"{o}dinger equations, for which the
existence of the scattering operators and the global well-posedness
of solutions with small data in Besov spaces $B^{s}_{2,1}(\R^n)$ are
obtained. In one spatial dimension, we get the global well-posedness
result with small data in the critical homogeneous Besov spaces
$\dot{B}^{s}_{2,1}$. As a by-product, the existence of the
scattering  operators with small data is also obtained. In order to
show these results, the global version of the estimates for the
maximal functions and the local smoothing effects on the fourth
order Schr\"{o}dinger semi-groups are established.

{\bf Keywords:} Fourth order nonlinear Schr\"{o}dinger equations,
the Cauchy problem, estimates for the maximal functions, global well-posedness, small data. \\

{\bf MSC 2000:} 35Q55, 35G25, 35A07.

\section{Introduction}
In the present paper, we consider the Cauchy problem for the fourth
order nonlinear Schr\"{o}dinger equations with derivatives (4NLS)
\begin{equation}\label{4NLS}
iu_{t}+\Delta^{2}u-\eps\Delta u=F((\partial_x^\alpha
u)_{\abs{\alpha}\leq 3},
(\partial_x^\alpha\bar{u})_{\abs{\alpha}\leq 3}),\quad
u(0,x)=u_0(x),
\end{equation}
where $\eps\in\{0,\, 1\}$,  $u$ is a complex valued function of
$(t,x)\in \R\times \R^n$,
\begin{equation*}
\Delta u=-\FF\abs{\xi}^{2}\F u,\quad \Delta^2 u=\FF\abs{\xi}^{4}\F
u,
\end{equation*}
$F:\mathbb{C}^{\frac{1}{3}n^3+2n^{2}+\frac{11}{3}n+2}\longrightarrow
\mathbb{C}$ is a polynomial of the form
\begin{equation}\label{Fd}
F(z)=P(z_{1},...,
z_{\frac{1}{3}n^3+2n^{2}+\frac{11}{3}n+2})=\sum_{m+1\leq\mid\beta\mid\leq
M+1}c_{\beta}z^{\beta},\quad c_{\beta}\in \mathbb{C}
\end{equation}
$m,\,M\in\N$ will be given below.

The fourth order nonlinear Schr\"odinger equation, including its
special forms, arise in  deep water wave dynamics, plasma physics,
optical communications (see \cite{Dys}). A large amount of
 work has been devoted to the  Cauchy problem of
dispersive equations, such as
\cite{Ben,Ca,Chihara,CS,GT,Guo1,Hao,Hayashi,Huo,Karpman1996,Kato,KP,KPV1,KPV2,KPV3,KPV98,KPV4,KF84,Pecher,Seg2003,Seg2004,Sj87,Ve88}
and references  therein. In \cite{Seg2003}, by using the method of
Fourier restriction norm, Segata studied a special fourth order
nonlinear Schr\"odinger equation in one dimensional space. And the
results have been improved in \cite{Huo,Seg2004}.

In order to study the influence of higher order dispersion on
solitary waves, instability and the collapse phenomena, Karpman
 introduced a class of nonlinear Schr\"odinger
equations (see \cite{Karpman1996})
\begin{equation*}
    i\Psi_t+\frac{1}{2}\Delta\Psi+\frac{\gamma}{2}\Delta^2\Psi+f(\abs{\Psi}^2)\Psi=0.
\end{equation*}

In \cite{Ben},  Ben-Artzi,  Koch and Saut discussed the sharp
space-time decay properties of fundamental solutions to the linear
equation
\begin{align*}
    i\Psi_t-\eps\Delta\Psi+\Delta^2\Psi=0,\
    \eps\in\{-1,0,1\}.
\end{align*}

In \cite{Guo1},  Guo and Wang considered the existence and
scattering theory for the Cauchy problem of nonlinear Schr\"odinger
equations with the form
\begin{equation}\label{w1}
    iu_t+(-\Delta)^m u+f(u)=0,\quad u(0,x)=\varphi(x).
\end{equation}
where $m\geq 1$ is an integer.  Pecher and Wahl in \cite{Pecher}
proved the existence of classical global solutions of \eqref{w1}
for the space dimensions $n\leq7m$ for the case $m\geq 1$. In
\cite{Hao}, Hao, Hsiao and Wang discussed the local well-posedness
of the Cauchy problem \eqref{w1} for the large initial data for
$m=2$ and $f(u)=P((\partial_x^j u)_{j\leq 2}, (\partial_x^j
\bar{u})_{j\leq 2})$ in one dimension.

  In \cite{Hao1}, Hao, Hsiao and Wang
considered the equation
\begin{equation}\label{w3}
i\partial_t u=\Delta^2-\eps\Delta u+P\left((\partial_x^\alpha
u)_{\abs{\alpha}\leq 2}, (\partial_x^\alpha
    \bar{u})_{\abs{\alpha}\leq2}\right),\quad
u(0,x)=\varphi(x).
\end{equation}
in multi-dimensional cases with $\eps=-1,0,1$,  where they
obtained the local well-posedness for the Cauchy problem
\eqref{w3} for the large initial data.

In \cite{KPV3}, Kenig,  Ponce and Vega  studied the nonlinear
Schr\"odinger equation of the form
$$
    \partial_{t} u=i\Delta u+P(u,\bar{u}, \nabla_x u,\nabla_x\bar{u}),\quad t\in\R,\ x\in\R^n
$$ and proved that the corresponding Cauchy problem is locally
well-posed for small data in the Sobolev spaces $H^s(\R^n)$ and in
its weighted version by pushing forward the linear estimates
associated with the Schr\"odinger group
$\{e^{it\Delta}\}_{-\infty}^\infty$ and by introducing suitable
function spaces where these estimates act naturally. They also
studied generalized nonlinear Schr\"odinger equations in
\cite{KPV98} and quasi-linear Schr\"odinger equations in
\cite{KPV4}. In one dimensional case, the smallness assumption on
the size of the data was removed by  Hayashi and  Ozawa
\cite{Hayashi} by using a change of variables to obtain an
equivalent system with a nonlinear term independent of $\partial_x
u$, where the new system could be treated by the standard energy
method. In \cite{Chihara},  Chihara was able to remove the size
restriction on the data in any dimensions by using an invertible
classical pseudo-differential operator of order zero.

In \cite{Wang}, Wang and Wang  discussed
\begin{equation}
iu_{t}+\Delta_{\pm}u=F(u,\bar{u},\nabla u,\nabla
 \bar{u}),\quad u(0,x)=u_{0}(x)\label{w5}
 \end{equation}
where $\Delta_{\pm}u=\sum_{i=1}^n \eps_i\partial_{x_i}^2u$ and
$\eps_i\in\{-1,1\}$.  They established an estimate for the global
maximal function and proved \eqref{w5} is global well-posed.
Moreover, t+he existence of the scattering operators to \eqref{w5}
with small data in Besov space $B^{s}_{2,1}(\R^{n})$ are obtained.

 In this paper, we mainly use the dispersive
smoothing effects of the linear Schr\"{o}dinger equation(cf. Kenig,
Ponce and Vega \cite{KPV3,KPV98}). The crucial point is that the
fourth order Schr\"{o}dinger semi-groups has the following local
smoothing effects for $n \geq2$:
\begin{equation}
\sup_{\alpha}\norm{e^{it(\Delta^2-\eps\Delta)}u_0}_{L^{2}_{t,x}(\R\times
Q_{\alpha})}\lesssim
\norm{u_0}_{\dot{H}^{-\frac{3}{2}}}\label{Kenig1}
\end{equation}
\begin{equation}
\sup_{\alpha}\norm{D^{3}\int_{0}^{t}
e^{i(t-\tau)(\Delta^2-\eps\Delta)}f(s)ds}_{L^{2}_{t,x}(\R\times
Q_{\alpha})}\lesssim \sum_{\alpha}\norm{f}_{L^{2}_{t,x}(\R\times
Q_{\alpha})}\label{Bh}
\end{equation}
where
$D^{3}u=\sum\limits_{\abs{\beta}=3}\norm{\partial^{\beta}_{x}u}$,
$Q_{\alpha}$ is the unit cube with center at $\alpha\in\Z^n$. The
estimate \eqref{Kenig1} was established by Kenig, Ponce and  Vega in
\cite{KPV3}. Since the estimate \eqref{Bh} contains three order
smoothing effect, with the method of Wang and Wang \cite{Wang}, we
can control three order derivative nonlinear terms in \eqref{4NLS}.
The estimate \eqref{Bh} improves the local smoothing effect
estimates of Hao, Hsiao and Wang \cite{Hao1}, see Proposition
\ref{p36} and Lemma \ref{l41} for details.

\subsection{Main results}

In this paper, we mainly use the method in \cite{Wang}
 to study the global well-posedness and the existence of the
scattering operator of \eqref{4NLS} with small data in
$B^{s}_{2,1},s>n/2+9/2$. We now state our main results, the
notations used in this paper can be found at the end of this part.
\begin{theorem}\label{t11}
Let $n\geq 3$ and $s>n/2+9/2$. Let $F(z)$ be as in \eqref{Fd} with
$2+8/n\leq m\leq M<\infty$. We have the following results.

(1) If $\norm{u_0}_{B^{s}_{2,1}}\leq \delta$ for $n\geq5$, or
$\norm{u_0}_{B^{s}_{2,1}\cap \dot{H}^{-\frac{3}{2}}}\leq \delta$ for
  $n=3,4$, where $\delta>0$ is a suitably small number, then \eqref{4NLS} has
  a unique global solution $u\in C(\R,B^{s}_{2,1})\cap
  X_0$, where
\begin{align*}
X_{0}=\left\{u:
\begin{array}{l}
\norm{D^{\beta}u}_{\ell^{1,s-3/2}_{\triangle}\ell^{\infty}_{\alpha}(L^{2}_{t,x}(\R\times
Q_{\alpha}))}\lesssim\delta,\; |\beta|\leq3\\
\norm{D^{\beta}u}_{\ell^{1,s-3/2}_{\triangle}\ell^{2+8/n}_{\alpha}(L^{\infty}_{t,x}\cap
(L^{2m}_{t}L^{\infty}_{x})(\R\times
Q_{\alpha}))}\lesssim\delta,\;|\beta|\leq3
\end{array}
\right\}.
\end{align*}
 Moreover, for $ n\geq 5$, the scattering operator of \eqref{4NLS}
carries the ball $\{u:\norm{u}_{B^{s}_{2,1}}\leq\delta\}$ into
$B^{s}_{2,1}$.

 (2) If $s+9/2\in\N$ and
$\norm{u_0}_{H^s}\leq\delta$ for
  $n\geq5$, or $\norm{u_0}_{{H^s}\cap{\dot{H}^{-3/2}}}\leq\delta$ for $n=3,4$, where $\delta>0$ is a suitably small number, then  \eqref{4NLS} has
  a unique global solution $u\in C(\R,H^{s})\cap
  X$, where
\begin{align*}
X=\left\{u:
\begin{array}{l}
\norm{D^{\beta}u}_{\ell^{\infty}_{\alpha}(L^{2}_{t,x}(\R\times
Q_{\alpha}))}\lesssim\delta,\;\abs{\beta}\leq s+3/2\\
\norm{D^{\beta}u}_{\ell^{2+8/n}_{\alpha}(L^{\infty}_{t,x}\cap
(L^{2m}_{t}L^{\infty}_{x})(\R\times
Q_{\alpha}))}\lesssim\delta,\;|\beta|\leq 3
\end{array}
\right\}.
\end{align*}
Moreover, for $n\geq5$, the scattering operator of \eqref{4NLS}
carries the ball $\{u:\norm{u}_{H^{s}}\leq\delta\}$ into $H^{s}$.
\end{theorem}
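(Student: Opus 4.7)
The plan is to recast the Cauchy problem as the Duhamel integral equation
\[
u(t)=S(t)u_0-i\int_0^t S(t-\tau)F\bigl((\partial_x^\alpha u)_{\abs{\alpha}\leq3},(\partial_x^\alpha\bar u)_{\abs{\alpha}\leq3}\bigr)(\tau)\,d\tau,\quad S(t):=e^{it(\Delta^2-\eps\Delta)},
\]
and to run a contraction mapping argument on a small ball of the resolution space $X_0$ in part~(1) (respectively $X$ in part~(2)). The space $X_0$ is designed precisely so that every seminorm appearing in it is controlled, on the linear side, by a single dispersive mechanism for $S(t)$.

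\textbf{Linear estimates.} First I would verify $\norm{S(\cdot)u_0}_{X_0}\les\norm{u_0}_{B^s_{2,1}}$ (augmented by a $\dot H^{-3/2}$ bound when $n=3,4$, to cover the low-frequency deficit of the smoothing estimate). The $\ell^\infty_\alpha L^2_{t,x}(Q_\alpha)$ components follow from \eqref{Kenig1} applied dyadically and losing $3/2$ derivatives; the $\ell^{2+8/n}_\alpha(L^\infty_{t,x}\cap L^{2m}_tL^\infty_x)(Q_\alpha)$ components come from the global maximal function estimate (Proposition~\ref{p36}), the fourth-order analogue of the Wang--Wang estimate in \cite{Wang}. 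Applying each inequality frequency-block-by-frequency-block and summing against the weight $2^{j(s-3/2)}$ defining $\ell^{1,s-3/2}_\triangle$ delivers the linear bound.

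\textbf{Multilinear estimate.} The decisive step is to control $F$ in the norm on the right-hand side of \eqref{Bh}, namely $\sum_\alpha\norm{\cdot}_{L^2_{t,x}(\R\times Q_\alpha)}$. For each monomial $c_\beta z^\beta$ of $F$, I would single out one factor as the ``derivative factor'' and place it in the local smoothing seminorm, then put the remaining $\abs{\beta}-1\geq m$ factors into $L^\infty_{t,x}\cap L^{2m}_tL^\infty_x$ on each $Q_\alpha$. Hölder in $(t,x)$ over $Q_\alpha$ followed by Hölder in $\alpha$ yields the product in $\ell^r_\alpha L^2_{t,x}(Q_\alpha)$ with $1/r=m/(2+8/n)$; the hypothesis $m\geq2+8/n$ forces $r\leq1$ so the required $\ell^1_\alpha$ summability follows by the embedding $\ell^r\hookrightarrow\ell^1$. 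Extra factors in higher-degree monomials ($\abs{\beta}-1>m$) are absorbed in $L^\infty_{t,x}$. The dyadic sum against $2^{j(s-3/2)}$ then closes through the Besov algebra structure, which is available because $s>n/2+9/2$ absorbs both the three derivatives produced by $F$ and the $3/2$-loss of the free smoothing estimate.

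\textbf{Contraction and scattering.} Combining the linear and multilinear estimates yields a self-map of a small $X_0$-ball; differencing produces the Lipschitz bound and hence a unique fixed point by Banach's theorem. For $n\geq5$, the $X_0$-integrability over all of $\R$ forces $\int_0^\infty S(-\tau)F(u(\tau))\,d\tau$ to converge in $B^s_{2,1}$, so the asymptotic states $u_\pm:=\lim_{t\to\pm\infty}S(-t)u(t)$ exist in $B^s_{2,1}$ and the map $u_-\mapsto u_+$ is the scattering operator. Part~(2) is obtained by the same contraction in the Sobolev framework $H^s$, using $s+9/2\in\N$ to enumerate integer derivatives directly and the standard Sobolev algebra property. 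The principal obstacle is the multilinear estimate: one must simultaneously reconcile the Hölder exponents for the cube summation in $\alpha$, the dyadic summation against $2^{j(s-3/2)}$, and the high--high Besov interactions, all with the exact derivative budget $\abs{\beta}\leq3$ permitted in the nonlinearity and the precise weight $s-3/2$ appearing in $X_0$.
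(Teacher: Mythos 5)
Your proposal follows essentially the same route as the paper: rewrite as Duhamel's formula, run a contraction on a small ball of $X_0$ (resp.\ $X$), estimate the nonlinearity by putting one factor in the local-smoothing slot $\ell^\infty_\alpha(L^2_{t,x}(Q_\alpha))$ and the remaining $\tilde R-1\geq m$ factors in the maximal-function norms $\ell^{2^*}_\alpha(L^\infty_{t,x}\cap L^{2m}_tL^\infty_x)$, then sum in $\alpha$ via $\ell^{2^*}\subset\ell^{\tilde R-1}$ and close the dyadic sum against $2^{j(s-3/2)}$ using the nonlinear mapping estimate. The hypothesis $m\geq 2+8/n=2^*$ enters exactly as you describe.

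One small but real slip and one gap in the writing. The slip: you repeatedly attribute the global maximal-function bound to Proposition~\ref{p36}; that proposition is the inhomogeneous local-smoothing effect $\sup_\alpha\norm{D^\nu u}_{L^2_{t,x}(\R\times Q_\alpha)}\les\sum_\alpha\norm{F}_{L^2_{t,x}(\R\times Q_\alpha)}$, not a maximal estimate. The maximal-function inputs you need are Propositions~\ref{p25} and~\ref{p29}, which give $\ell^{2^*}_\alpha(L^\infty_{t,x}(\R\times Q_\alpha))$ and $\ell^{2^*}_\alpha(L^q_tL^\infty_x(\R\times Q_\alpha))$ control by $H^s$ with $s>n/2$ (resp.\ $s>n/2-4/q$). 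The gap: your multilinear step silently assumes that after hitting $D^\beta$ on a monomial (needed for the high-regularity side of $X_0$ and $X$), you can still peel off exactly one full derivative burden and leave the other factors undifferentiated. In the paper's Sobolev version (part~(2)) this is handled explicitly by Leibniz followed by the interpolation inequality of Lemma~\ref{l41}, which spreads the intermediate derivatives $D^{\beta_i}$ across factors via $\norm{D^{\beta_i}u}_{L^{p_i}(Q_\alpha)}\les\norm{u}^{1-\theta_i}_{L^\infty(Q_\alpha)}\norm{u}^{\theta_i}_{W^{|\beta|}_2(Q_\alpha)}$ and collapses back to $\norm{u}_{W^{|\beta|}_2}\cdot\norm{u}^{\tilde R-1}_{L^\infty}$; in the Besov version (part~(1)) the analogous redistribution is packaged inside Lemma~\ref{l42}. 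Your appeal to ``the Besov algebra structure'' points in the right direction but does not by itself justify placing three derivatives on only one factor and zero on the others after a full Leibniz expansion; you should invoke Lemma~\ref{l41} (for $X$) or Lemma~\ref{l42} (for $X_0$) explicitly at that step.
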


Next, we consider one spatial dimension case. Denote
\begin{equation*}
s_{k}=\frac{1}{2}-\frac{4}{k},\quad
\tilde{s}_{k}=\frac{1}{2}-\frac{1}{k}.
\end{equation*}

\begin{theorem}\label{t14}
Let  $n=1$, $\eps=0$, $M\geq m\geq 8$, and $u_{0}\in
\dot{B}^{3+\tilde{s}_{M}}_{2,1}\cap\dot{B}^{s_{m}}_{2,1}$.  Assume
that there exists a small $\delta>0$ such that
$\norm{u_0}_{\dot{B}^{1+\tilde{s}_{M}}_{2,1}\cap\dot{B}^{1+s_{m}}_{2,1}}\leq\delta$.
Then \eqref{4NLS} has a global solution $u\in
X=\{u\in\mathscr{S}'(\R^{1+1}):\norm{u}_{X}\lesssim \delta\}$, where
\begin{align*}
\norm{u}_{X}=&\sup_{s_{m}\leq s\leq
\tilde{s}_{M}}\sum_{i=0,3}\sum_{j\in\mathbb{Z}}
|\!|\!|\partial^{i}_{x}\triangle_{j}u|\!|\!|_{s}
\ \ \text{for}\ \  m>8,\nonumber\\
\norm{u}_{X}=&\sum_{i=0,3}\big(\norm{\partial^{i}_{x}u}_{L^{\infty}_{t}L^{2}_{x}\cap
L^{10}_{t,x}}+\sup_{s_{9}\leq s\leq
\tilde{s}_{M}}\sum_{j\in\mathbb{Z}}\||\partial^{i}_{x}\triangle_{j}u\||_{s}\big)
\
\ \text{for}\ \  m=8,\nonumber\\
|\!|\!|\triangle_{j}v|\!|\!|_{s}:=&2^{sj}\big(\norm{\triangle_{j}v}_{L^{\infty}_{t}L^{2}_{x}\cap
L^{10}_{t,x}}+2^{\frac{3j}{2}}\norm{\triangle_{j}v}_{L^{\infty}_{x}L^{2}_{t}}\big)\nonumber\\
&+2^{(s-\tilde{s}_{m})j}\norm{\triangle_{j}v}_{L^{m}_{x}L^{\infty}_{t}}+2^{(s-\tilde{s}_{M})j}\norm{\triangle_{j}v}_{L^{M}_{x}L^{\infty}_{t}}.
\end{align*}
\end{theorem}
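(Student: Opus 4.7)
The plan is to solve \eqref{4NLS} via a contraction mapping argument in the Banach space $X$, applied to the Duhamel operator
\begin{equation*}
\mathcal{T}(u)(t)=e^{it\Delta^{2}}u_{0}-i\int_{0}^{t}e^{i(t-\tau)\Delta^{2}}F(u)(\tau)\,d\tau.
\end{equation*}
The two ingredients I need are a linear bound $\|e^{it\Delta^{2}}u_{0}\|_{X}\lesssim\|u_{0}\|_{\dot{B}^{1+\tilde{s}_{M}}_{2,1}\cap\dot{B}^{1+s_{m}}_{2,1}}$ and a multilinear bound controlling the Duhamel term by a superlinear power of $\|u\|_{X}$; combined with the smallness of $\delta$, these produce a self-map and a contraction on the ball $\{\|u\|_{X}\lesssim\delta\}$, and hence the required global solution.

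I would prove the linear bound one Littlewood--Paley block at a time. On $\triangle_{j}u_{0}$ the four pieces of the norm $|\!|\!|\cdot|\!|\!|_{s}$ are controlled as follows: the $L^{\infty}_{t}L^{2}_{x}$ bound is $L^{2}$-conservation of the group; the $L^{10}_{t,x}$ bound is the 1D fourth order Strichartz inequality with admissible pair $(10,10)$ (since $4/10+1/10=1/2$); the $2^{3j/2}$-weighted $L^{\infty}_{x}L^{2}_{t}$ bound comes from \eqref{Kenig1} combined with $\|\triangle_{j}u_{0}\|_{\dot{H}^{-3/2}}\simeq 2^{-3j/2}\|\triangle_{j}u_{0}\|_{L^{2}}$; and the $2^{(s-\tilde{s}_{p})j}$-weighted $L^{p}_{x}L^{\infty}_{t}$ bounds for $p\in\{m,M\}$ come from the global maximal function estimate $\|\triangle_{j}e^{it\Delta^{2}}u_{0}\|_{L^{p}_{x}L^{\infty}_{t}}\lesssim 2^{j\tilde{s}_{p}}\|\triangle_{j}u_{0}\|_{L^{2}}$ for the fourth order semigroup (one of the main technical results of the paper). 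Multiplying by the Besov weight $2^{sj}$, taking the supremum over $s\in[s_{m},\tilde{s}_{M}]$, and summing in $j\in\Z$ through the $\ell^{1}$ Besov structure then yields the claim.

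For the multilinear bound, \eqref{Bh} (in its 1D form) dominates the smoothing component of the Duhamel piece by $\|F(u)\|_{L^{1}_{x}L^{2}_{t}}$, and the remaining components of $\|\cdot\|_{X}$ are reduced to similar mixed-norm quantities by duality against the linear estimates above. A typical monomial in $F(u)$ is a product $\prod_{k=1}^{\ell}\partial^{\alpha_{k}}u_{k}$ with $\ell\in[m+1,M+1]$, $|\alpha_{k}|\leq 3$ and $u_{k}\in\{u,\bar{u}\}$. I would apply H\"older in mixed norms to distribute the factors between $L^{\infty}_{x}L^{2}_{t}$ (absorbed by the $2^{3j/2}$-weighted smoothing piece of $\|u\|_{X}$) and $L^{\ell-1}_{x}L^{\infty}_{t}$ (matched to the $L^{m}_{x}L^{\infty}_{t}$ and $L^{M}_{x}L^{\infty}_{t}$ pieces when $\ell-1\in\{m,M\}$), with the derivative order $|\alpha_{k}|\leq 3$ on each factor paid for by the Besov weight in the corresponding component of $\|u\|_{X}$. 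A paraproduct decomposition organises the assignment of derivatives to high or low dyadic frequencies.

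The main obstacle is the derivative-counting bookkeeping. The hypothesis $m\geq 8$ arises from the scaling needed to match the $\tilde{s}_{p}$ gains in the maximal function pieces against the three-derivative loss in the nonlinearity: after distributing derivatives across the $\ell\geq m+1$ factors, the resulting Besov exponent must fall back inside $[s_{m},\tilde{s}_{M}]$, which is exactly why the supremum over $s$ enters the definition of $\|u\|_{X}$. The endpoint $m=8$ is scale-critical, and this is why $\|u\|_{X}$ is augmented at $m=8$ by the $L^{\infty}_{t}L^{2}_{x}\cap L^{10}_{t,x}$ norms: they replace the degenerate $s=s_{m}=0$ endpoint that the supremum would otherwise have to cover. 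Once the multilinear estimate is in hand, an analogous difference estimate closes the contraction and yields the global solution in $X$.
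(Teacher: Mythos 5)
Your plan is the paper's: contraction for the Duhamel operator in $X$, driven by the four frequency-localized linear bounds — $L^2$-conservation for $L^{\infty}_tL^2_x$, the 1D fourth-order Strichartz pair $(10,10)$, the $3/2$-derivative local smoothing bound $\norm{\triangle_j S_0(t)g}_{L^\infty_x L^2_t}\lesssim 2^{-3j/2}\norm{\triangle_j g}_2$, and the maximal function bound $\norm{\triangle_j S_0(t)g}_{L^p_x L^\infty_t}\lesssim 2^{j(1/2-1/p)}\norm{\triangle_j g}_2$ — together with the retarded $3$-derivative smoothing estimate that controls $\partial_x^3\mathscr{A}_0$ by $\norm{F}_{L^1_xL^2_t}$; these are exactly the paper's Lemma \ref{l71}. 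Your reading of the $m=8$ endpoint (that $s_8=0$ forces the addition of the unweighted $L^\infty_tL^2_x\cap L^{10}_{t,x}$ norms) matches the paper's Step 2.

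The part you fold into ``the remaining components \dots{} by duality'' is where your sketch is genuinely incomplete. The paper must control $F$ in \emph{two} independent dual norms: $\norm{\triangle_j F}_{L^1_xL^2_t}$ (feeding \eqref{1str7}--\eqref{1str9}, which is what your H\"older split ``one factor in $L^\infty_x L^2_t$, rest in $L^{\ell-1}_xL^\infty_t$'' produces) and, separately, $\norm{\triangle_j F}_{L^{10/9}_{t,x}}$ (feeding \eqref{1str4}--\eqref{1str6} for the $i=0$ component of $\mathscr{T}u$); see \eqref{FJ}, where these appear as $I$ and $II$. The second bound requires a different assignment of factors — one factor in $\ell^{1,s}_\triangle(L^{10}_{t,x})$, the rest in $L^{5(\tilde R-1)/4}_{t,x}$ — together with the interpolation embedding $L^\infty_t\dot H^{s_\lambda}\cap L^{10}_t\dot H^{s_\lambda}_{10}\subset L^{5\lambda/4}_{t,x}$ (the paper's \eqref{IB}--\eqref{1Ib}), and it is precisely this interpolation that forces $m\geq 8$, not the $L^1_xL^2_t$ split you describe. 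Without the $L^{10/9}_{t,x}$ step, the $L^\infty_tL^2_x\cap L^{10}_{t,x}$ piece of $\norm{\mathscr{T}u}_X$ does not close at $i=0$, since no smoothing gain is available there. One smaller framing difference: the paper does not use a Bony paraproduct; the derivative/regularity bookkeeping is handled by the Wang--Wang multilinear Besov estimate (Lemma \ref{l72}), which places the $\ell^{1,s}_\triangle$ weight on a single factor and $\ell^1_\triangle$ on the rest — the same effect, but a ready-made product rule rather than a paraproduct decomposition.
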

Recall that the norm on homogeneous Besov spaces $\dot{B}^{s}_{2,1}$
can be defined in the following way:
$$\norm{f}_{\dot{B}^{s}_{2,1}}=\sum^{\infty}_{j=-\infty}2^{sj}\left(\int_{2^{j}\leq\abs{\xi}<2^{j+1}}|\mathscr{F} f(\xi)|^{2}d\xi\right)^{1/2}.$$

\subsection{Notations}
Throughout this paper,  we will always use the following notations.
$\mathscr{S}(\R^n)$ and $\mathscr{S}'(\R^n)$ stand for the Schwartz
space and its dual space, respectively.  We denote by $L^p(\R^n)$
the Lebesgue spaces with norms $\|\cdot\|_p:=
\|\cdot\|_{L^p(\R^n)}$. The Bessel potential space is defined by
$H^s_p(\R^n): =(I-\Delta)^{-s/2} L^p(\R^n)$, $H^s(\R^n)=H^s_2
(\R^n)$, $\dot H^s(\R^n) =(-\Delta)^{-s/2}
L^2(\R^n)$.\footnote{$\R^n$ will be omitted in the definitions of
various function spaces if there is no confusion.} For any
quasi-Banach space $X$, we denote by $X^*$ its dual space,
 by $L^p(I, X)$ the Lebesgue-Bochner space,  $\|f\|_{L^p(I,X)}:=
(\int_I \|f(t)\|^p_X dt)^{1/p}$.  If $X=L^r(\Omega)$, then we write
$L^p(I, L^r(\Omega))= L^p_tL^r_x(I\times \Omega)$ and
$L^p_{t,x}(I\times \Omega)= L^p_tL^p_x(I\times \Omega)$.   Let
$Q_\alpha$ be the unit cube with center at $\alpha\in \mathbb{Z}^n$,
i.e., $Q_\alpha=\alpha+ Q_0, Q_0= \{x=(x_1,...x_n): -1/2\le x_i<
1/2\}.$  We also need the function space $\ell^q_\alpha
(L^p_tL^r_{x}(I\times Q_\alpha))$ with the norm
$$
\|f\|_{\ell^q_\alpha (L^p_{t}L^r_{x}(I\times Q_\alpha))}:=
\left(\sum_{\alpha\in \mathbb{Z}^n} \|f\|^q_{L^p_{t}L^r_{x}(I\times
Q_\alpha)} \right)^{1/q}.
$$
We denote by $\mathscr{F}$ ($\mathscr{F}^{-1}$) the (inverse)
Fourier transform for the spatial variables; by $\mathscr{F}_t$
($\mathscr{F}^{-1}_{t}$) the (inverse) Fourier transform for the
time variable and by $\mathscr{F}_{t,x}$ ($\mathscr{F}^{-1}_{t,x}$)
the (inverse) Fourier transform for both time and spatial variables,
respectively.  If there is no additional explanation, we always
denote by $\varphi_k(\cdot)$ the dyadic decomposition functions as
in \eqref{dyadic-funct}; and by $\sigma_k(\cdot)$ the uniform
decomposition functions as in \eqref{Mod.2}.  $u \star v$ and $u*v$
will stand for the convolution on time and on spatial variables,
respectively, i.e.,
$$
(u\star v) (t,x)= \int_{\R} u(t-\tau,x) v(\tau,x)d\tau, \ \ (u* v)
(t,x)= \int_{\R^n} u(t,x-y) v(t,y)dy.
$$
Symbols $\R, \mathbb{N}$ and $ \mathbb{Z}$ will stand for the sets
of real numbers, natural numbers and integers, respectively.  $c<1$,
$C>1$ will denote positive universal constants, which may be
different at different places.  $a\lesssim b$ stands for $a\le C b$
for some constant $C>1$, $a\sim b$ means that $a\lesssim b$ and
$b\lesssim a$.  We denote by $p'$ the dual number of $p \in
[1,\infty]$, i.e., $1/p+1/p'=1$. For any $a>0$, we denote by $[a]$
the minimal integer that is larger than or equals to $a$. $B(x,R)$
will denote the ball in $\R^n$ with center at $x$ and radial $R$. We
denote $S_{\eps}(t)=e^{it(\Delta^{2}-\eps\Delta)}$ and
$\mathscr{A}_{\eps}f=\int_{0}^{t}S_{\eps}(t-\tau)f(\tau)d\tau$.
\subsection{Besov  spaces} \label{functionspace}
Let us recall that Besov spaces $B^s_{p,q}:=B^s_{p,q}(\R^n)$ are
defined as follows (cf. \cite{bergh,Triebel}).  Let $\psi: \R^n \to
[0,1]$ be a smooth radial bump function adapted to the ball
$B(0,2)$:
\begin{align}
\psi (\xi)=\left\{
\begin{array}{ll}
1, & |\xi|\leq 1,\\
{\rm smooth}, & |\xi|\in (1,2),\\
 0, & |\xi|\geq 2.
\end{array}
\right.
 \label{cutoff}\
\end{align}
We write $\delta(\cdot):= \psi(\cdot)-\psi(2\,\cdot)$ and
\begin{align}
\varphi_j:= \delta(2^{-j}\cdot) \ \ {\rm for} \ \  j\geq 1; \quad
\varphi_0:=1- \sum_{j\geq 1} \varphi_j.  \label{dyadic-funct}
\end{align}
We say that $ \triangle_j := \mathscr{F}^{-1} \varphi_j \mathscr{F},
\quad j\in \mathbb{N} \cup \{0\}$ are the dyadic decomposition
operators.  Besov spaces $B^s_{p,q}=B^s_{p,q}(\R^n)$ are defined in
the following way:
\begin{align}
B^s_{p,q} =\left \{ f\in \mathscr{S}'(\R^n): \; \|f\|_{B^s_{p,q}} =
\left(\sum^\infty_{j=0}2^{sjq} \|\,\triangle_j f\|^q_p
\right)^{1/q}<\infty \right\}. \label{Besov.1}
\end{align}
 Let $\rho\in
\mathscr{S}(\R^n)$ and $\rho:\, \R^n\to [0,1]$ be a smooth radial
bump function adapted to the ball $B(0, \sqrt{n})$, say
$\rho(\xi)=1$ as $|\xi|\le \sqrt{n}/2$, and $\rho(\xi)=0$ as
$|\xi| \ge \sqrt{n} $. Let $\rho_k$ be a translation of $\rho$: $
\rho_k (\xi) = \rho (\xi- k), \; k\in \mathbb{Z}^n$.  We write
(see \cite{Wang,Wang1,Wang2,Wang3})
\begin{align}
\sigma_k (\xi)= \rho_k(\xi) \left(\sum_{k\in \mathbb{
Z}^n}\rho_k(\xi)\right)^{-1}, \quad k\in \mathbb{Z}^n. \label{Mod.2}
\end{align}

 We can define the space $\ell^{1,s}_\triangle \ell^q_\alpha (L^p_{t,x}(I\times
Q_\alpha))$ with the following norm:
\begin{align}
\|f\|_{\ell^{1,s}_\triangle \ell^q_\alpha (L^p_{t} L^r_x (I\times
Q_\alpha))}:= \sum^\infty_{j=0} 2^{sj} \left(\sum_{\alpha\in
\mathbb{Z}^n} \|\triangle_j f\|^q_{L^p_{t}L^r_x (I\times Q_\alpha)}
\right)^{1/q}. \label{Besov.7}
\end{align}
A special case is $s=0$,  $\ell^{1,0}_\triangle \ell^q_\alpha
(L^p_{t}L^r_x(I\times Q_\alpha)) = \ell^{1}_\triangle \ell^q_\alpha
(L^p_{t}L^r_x (I\times Q_\alpha))$.
 The rest of this paper is organized as follows. In Section 2, we
 give the details of the estimates for the maximal function in
 certain function spaces. Section 3 is devoted to consider the
 spatial local versions for the Strichartz estimates and giving
 some remarks on the estimates of the local smoothing effects. In
 Sections 4-5, we prove our main Theorems \ref{t11}-\ref{t14}, respectively.

 \section{Estimates for the maximal function}
 We give some estimates for the maximal function related to the
 fourth order Schr\"odinger semi-groups,  an earlier time-local
 version is due to Kenig, Ponce and vega \cite{KPV2}. We give some time-global versions by using a different approach.
  These estimates are crucial in the proof of Theorems \ref{t11}-\ref{t14}, respectively.

 \subsection{Time-local version}
 Recall that
 $S_{\eps}(t)=e^{it(\Delta^2-\eps\Delta)}=\ft^{-1}e^{it(\abs{\xi}^{4}+\varepsilon\abs{\xi}^{2})}\ft$, where
\begin{equation*}
 \abs{\xi}^{4}=\left(\sum_{j=1}^{n}{\xi_{j}^{2}}\right)^{2},\abs{\xi}^{2}=\sum_{j=1}^{n}\xi_{j}^{2}.
\end{equation*}
Hao, Hsiao and Wang \cite{Hao1} established the following maximal
function estimate
\begin{align*}
\left(\sum_{\alpha\in
\Z^n}\norm{S_{\varepsilon}(t)u_0}^{2}_{L^{\infty}_{t,x}([0,T]\times
Q_\alpha)}\right)^{1/2}\lesssim C(T)\norm{u_0}_{H^s}
\end{align*}
where $s>n+1/2$, $T\in(0,1]$.

\subsection{Time-global version}
 Recall that we have the following equivalent norm on Besov
 spaces \cite{bergh,Triebel}:
\begin{lemma}\label{l22}
Let $1\leq p,q\leq\infty$, $\sigma>0$, $\sigma\notin\N$. Then we
have
\begin{equation*}
\norm{f}_{B^{\sigma}_{p,q}}\sim\sum_{|\beta|\leq|\sigma|}\norm{D^{\beta}f}_{L^{p}(\R^n)}
+\sum_{|\beta|\leq|\sigma|}\left(\int_{\R^n}|h|^{-n-q\{\sigma\}}\norm{\Delta_{h}D^{\beta}f}^{q}_{L^{p}(\R^n)}dh\right)^{1/q}
\end{equation*}
where $\Delta_{h}f=f(\cdot+h)-f(\cdot)$, $[\sigma]$ denotes the
minimal integer that is larger than or equals to $\sigma$,
$\{\sigma\}=\sigma-[\sigma]$.
\end{lemma}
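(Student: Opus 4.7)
The plan is to treat this as the standard difference-based characterization of inhomogeneous Besov spaces, which goes back to Besov's original definition and is worked out in detail in Bergh--L\"ofstr\"om and Triebel. My strategy is to reduce everything to the subunit-smoothness range via derivatives, and then to prove the reduced equivalence through the Littlewood--Paley decomposition, matching differences against dyadic blocks through Bernstein's inequality and the cancellation of the Littlewood--Paley kernel.

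First I would use the differentiation/lifting property of Besov spaces: for $\sigma>0$ with $\sigma\notin\N$, write $\sigma=k+\tau$ with $k=[\sigma]-1$ and $\tau=\sigma-k\in(0,1)$ (relative to the paper's ceiling convention, $\{\sigma\}$ has the same absolute value as $\tau$, and the exponent $-n-q\{\sigma\}$ is to be read accordingly). Then
\[
\|f\|_{B^\sigma_{p,q}}\sim \sum_{|\beta|\leq k}\|D^\beta f\|_p+\sum_{|\beta|=k}\|D^\beta f\|_{B^\tau_{p,q}},
\]
so it suffices to prove, for $g\in\mathscr{S}'(\R^n)$ and $0<\tau<1$,
\[
\|g\|_{B^\tau_{p,q}}\sim \|g\|_p+\left(\int_{\R^n}|h|^{-n-q\tau}\|\Delta_h g\|_p^q\,dh\right)^{1/q},
\]
and then apply this with $g=D^\beta f$ for each $|\beta|=k$; summing gives the claim up to cosmetic adjustments in the index set of $\beta$.

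For the upper bound in the reduced equivalence, I would decompose $g=\sum_{j\geq 0}\triangle_j g$ and use the two competing estimates
\[
\|\Delta_h\triangle_j g\|_p\leq 2\|\triangle_j g\|_p,\qquad \|\Delta_h\triangle_j g\|_p\leq |h|\,\|\nabla\triangle_j g\|_p\lesssim |h|\,2^j\|\triangle_j g\|_p,
\]
the second by the mean value theorem together with Bernstein. Splitting the $h$-integral over the annuli $|h|\sim 2^{-\ell}$ and inserting $\min(2,\,|h|2^j)$ reduces matters to a discrete Young-type convolution inequality on $\ell^q(\Z)$, which closes the bound by $\|g\|_{B^\tau_{p,q}}$. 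For the lower bound, I would use the cancellation of the Littlewood--Paley kernel: since $\varphi_j$ vanishes at the origin for $j\geq 1$, one can write $\triangle_j g(x)=\int K_j(y)\Delta_{-y}g(x)\,dy$ with $K_j$ a Schwartz function of fast decay at scale $2^{-j}$, so that $\|\triangle_j g\|_p\lesssim \int |K_j(y)|\,\|\Delta_y g\|_p\,dy$; a dyadic decomposition of the $y$-integral and a second application of discrete Young then yield control of $\sum_j 2^{j\tau q}\|\triangle_j g\|_p^q$ by the difference integral. The low-frequency term $\triangle_0 g$ is trivially controlled by $\|g\|_p$.

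The main obstacle is the bookkeeping: one must carefully reconcile the summation range $|\beta|\leq[\sigma]$ and the fractional part $\{\sigma\}=\sigma-[\sigma]$ (with $[\sigma]$ the ceiling, as fixed in the notation section) with the usual floor-based formulation of the result, verifying that both the integrability at $|h|\to 0$ and at $|h|\to\infty$ are consistent under the paper's convention --- this is a routine but nontrivial check that hinges on $\sigma\notin\N$. Beyond that, everything is classical and can be quoted directly from \cite{bergh,Triebel}.
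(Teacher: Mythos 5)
The paper supplies no proof of Lemma \ref{l22}: it merely quotes the equivalent-norm characterization from \cite{bergh,Triebel}, so you are really being compared against a citation. Your reconstruction follows the route taken in those references --- lifting by derivatives to reduce to $0<\tau<1$, then Littlewood--Paley together with Bernstein and the mean-value estimate $\norm{\Delta_h\triangle_j g}_p\les\min(1,|h|2^j)\norm{\triangle_j g}_p$ for the upper bound, kernel cancellation plus discrete Young for the lower bound --- so the strategy is the standard and correct one, and there is nothing further in the paper to compare it to.

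One slip in your bookkeeping parenthetical, however. Under the paper's stated convention that $[\sigma]$ is the \emph{ceiling}, the quantity $\{\sigma\}=\sigma-[\sigma]$ lies in $(-1,0)$; with your $k=[\sigma]-1$ and $\tau=\sigma-k\in(0,1)$ one gets $\{\sigma\}=\tau-1$, so $|\{\sigma\}|=1-\tau$, not $\tau$ as you claim. Read literally, the exponent $-n-q\{\sigma\}$ then equals $-n+q(1-\tau)$, which disagrees with the classical $-n-q\tau$ and in fact makes the $h$-integral diverge as $|h|\to\infty$. The correct resolution is not the identification you propose but to recognize a typo in the paper's statement: Lemma \ref{l22} is the standard result of \cite{bergh,Triebel} in which $[\sigma]$ is the integer part (floor) and $\{\sigma\}=\sigma-[\sigma]\in(0,1)$ the positive fractional part --- precisely the quantity your argument actually estimates. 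With that correction noted, your sketch proves the lemma.
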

 Taking $p=q$ in Lemma
\ref{l22}, we get
\begin{equation*}
\norm{f}_{B^{\sigma}_{p,p}}\sim\sum_{|\beta|\leq|\sigma|}
\norm{D^{\beta}f}_{L^{p}(\R^n)}
+\sum_{|\beta|\leq|\sigma|}\left(\int_{\R^n}|h|^{-n-p\{\sigma\}}\norm{\Delta_{h}D^{\beta}f}^{p}_{L^{p}(\R^{n})}dh\right)^{1/p}.
\end{equation*}
\begin{lemma}\label{l23}
Let $1<p<\infty$, $s>1/p$. Then we have
\begin{equation*}
\left(\sum_{\alpha\in \R^n}\norm{u}^{p}_{L^{\infty}_{t,x}(\R\times
Q_\alpha)}\right)^{1/p}\lesssim\norm{(I-\partial^{2}_{t})^{s/2}u}_{L^{p}(\R,B^{ns}_{p,p}(\R^{n}))}.
\end{equation*}
\end{lemma}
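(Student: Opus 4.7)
The inequality is an anisotropic Sobolev embedding: it gains regularity $s$ in time and $ns$ in space, exactly matching the threshold $s>1/p$ in each variable. My plan is to chain the one-dimensional Sobolev embedding in $t$ with the multidimensional Besov embedding in $x$, both valid precisely under the hypothesis $s>1/p$, and then reconcile the resulting mixed norms via Fubini together with a partition by unit cubes.

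First, for fixed $x\in\R^n$, the one-dimensional Sobolev embedding $H^{s}_{p}(\R)\hookrightarrow L^\infty(\R)$ (valid for $s>1/p$) applied in $t$ yields
$$\sup_{t\in\R}\abs{u(t,x)}\les\norm{(I-\partial_t^2)^{s/2}u(\cdot,x)}_{L^p_t}=:w(x),$$
hence $\norm{u}_{L^\infty_{t,x}(\R\times Q_\alpha)}\les\norm{w}_{L^\infty(Q_\alpha)}$. Second, the Besov embedding $B^{ns}_{p,p}(\R^n)\hookrightarrow L^\infty(\R^n)$ (valid since $ns>n/p$), localized to $Q_\alpha$ with constant uniform in $\alpha$ by translation invariance and expanded via Lemma \ref{l22} into $L^p$-norms of derivatives and first-order differences restricted to $Q_\alpha$, followed by $p$-summation over $\alpha\in\Z^n$ (the $L^p$-pieces on disjoint cubes add up to the global $L^p$-norm), gives
$$\sum_{\alpha\in\Z^n}\norm{w}^{p}_{L^\infty(Q_\alpha)}\les\norm{w}^{p}_{B^{ns}_{p,p}(\R^n)}.$$
Third, I commute the $L^p_t$-norm defining $w$ with the spatial Besov norm: Minkowski in $L^p_t$ gives the pointwise bound $\abs{\Delta_h w(x)}\leq\norm{\Delta_h (I-\partial_t^2)^{s/2}u(\cdot,x)}_{L^p_t}$, and Fubini --- legitimate because both Besov indices are $p$ --- converts the spatial-difference integrals appearing in the characterization of $\norm{w}_{B^{ns}_{p,p}(\R^n)}$ into $L^p_{t,x}$-norms of spatial differences of $(I-\partial_t^2)^{s/2}u$. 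This yields
$$\norm{w}_{B^{ns}_{p,p}(\R^n)}\les\norm{(I-\partial_t^2)^{s/2}u}_{L^p(\R,B^{ns}_{p,p}(\R^n))},$$
and chaining the three estimates completes the proof.

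The principal technical obstacle is the interchange in the last step. Lemma \ref{l22} also involves pure derivatives $D^{\beta}w$ for $\abs{\beta}\leq[ns]$, and because $w$ is a \emph{nonlinear} functional of $u$ (an $L^p_t$-norm), these derivatives do not directly open into $L^p_t$-norms of $D^{\beta}_x u$. The clean remedy is to invoke an equivalent purely difference-based (Nikol\!'ski\!\u{\i}-type) characterization of $B^{ns}_{p,p}$ using iterated differences $\Delta_h^N$ with integer $N>ns$; such differences of $w$ are then controlled in $L^p_x$ by $\norm{\Delta_h^N(I-\partial_t^2)^{s/2}u}_{L^p_{t,x}}$ via Minkowski applied to $L^p_t$, and Fubini finishes the commutation. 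That $q=p$ in $B^{ns}_{p,p}$ is exactly what makes the exchange lossless.
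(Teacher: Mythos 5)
Your Steps 1 and 2 are sound, but the "clean remedy" you propose in Step 3 has a genuine gap. You claim that for an iterated difference $\Delta_h^N$ with integer $N>ns$, Minkowski in $L^p_t$ gives $\abs{\Delta_h^N w(x)}\leq\norm{\Delta_h^N v(\cdot,x)}_{L^p_t}$, where $v=(I-\partial_t^2)^{s/2}u$ and $w(x)=\norm{v(\cdot,x)}_{L^p_t}$. This is only true for $N=1$, where it is the reverse triangle inequality. For $N\geq 2$ it is simply false: the map $a\mapsto\norm{a}_{L^p_t}$ is sublinear but not linear, and a second difference of a sublinear map need not be dominated by the second difference of its argument. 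A one-line counterexample with $N=2$: take $v(\tau,x)=a(x)$ constant in $\tau$ on a bounded interval (so $w\propto\abs{a}$), with $a(x)=1$, $a(x+h)=0$, $a(x+2h)=-1$; then $\Delta_h^2 a(x)=0$ while $\Delta_h^2 w(x)\propto 2\neq 0$. Since $ns>n/p\geq 1$ in every application in the paper ($n\geq 3$), you are forced to use differences of order $N\geq 2$, so the gap is real and your Step 3 does not close; the same objection applies to the pure derivatives $D^\beta w$ in Lemma \ref{l22}, which you flag but do not actually circumvent.

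The fix is to never form $w$. Write $u=G_s\star_t v$ with $G_s=\mathscr{F}^{-1}_t\langle\tau\rangle^{-s}\in L^{p'}(\R)$ (here $s>1/p$ is what puts $G_s$ in $L^{p'}$). Then for $x\in Q_\alpha$,
\begin{equation*}
\abs{u(t,x)}\leq\int_\R\abs{G_s(t-\tau)}\,\norm{v(\tau,\cdot)}_{L^\infty(Q_\alpha)}\,d\tau
\lesssim\norm{v}_{L^p_tL^\infty_x(\R\times Q_\alpha)},
\end{equation*}
so $\norm{u}_{L^\infty_{t,x}(\R\times Q_\alpha)}\lesssim\norm{v}_{L^p_tL^\infty_x(\R\times Q_\alpha)}$. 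Raise to the $p$-th power, sum over $\alpha\in\Z^n$, exchange $\sum_\alpha$ with $\int_\R dt$ (Tonelli), and only \emph{then} invoke your Step 2 at each fixed time $\tau$:
\begin{equation*}
\sum_\alpha\norm{u}^p_{L^\infty_{t,x}(\R\times Q_\alpha)}
\lesssim\int_\R\sum_\alpha\norm{v(\tau,\cdot)}^p_{L^\infty(Q_\alpha)}\,d\tau
\lesssim\int_\R\norm{v(\tau,\cdot)}^p_{B^{ns}_{p,p}(\R^n)}\,d\tau
=\norm{v}^p_{L^p(\R,B^{ns}_{p,p})}.
\end{equation*}
This reordering applies the spatial Besov embedding only to the linear object $v(\tau,\cdot)$ at fixed time, so the sublinearity issue never arises. (The paper does not reproduce the proof of this lemma; it cites \cite{Wang}. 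Its own sketch of the parallel Proposition~\ref{p29} proceeds by localizing with the cutoffs $\sigma_\alpha$ from \eqref{Mod.2} and using an $\ell^p$-decomposition estimate, which likewise keeps everything linear.)
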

The proof of lemma \ref{l23} can be found in \cite{Wang}.

For the case $\eps=1$, we recall some results of Guo and Wang
\cite{Guo}. For $S_{1}(t)=e^{it(\Delta^{2}-\Delta)}$,  we have
\begin{proposition}\label{pp}
 Assume $2\leq p\leq\infty$, $\frac{1}{p}+\frac{1}{p'}=1$, $1\leq q\leq \infty$, $\delta=\frac{1}{2}-\frac{1}{p}$, $-2n\delta\leq
 s'-s$, then
\begin{equation*}
\norm{S_{1}(t)g}_{B^{s}_{p,q}}\lesssim k(t)\norm{g}_{B^{s'}_{p',q}},
\end{equation*}
where
\begin{align*}
k(t)=\left\{
\begin{array}{ll}
\abs{t}^{\frac{1}{4}\min(s'-s-2n\delta,0)},& 0<t\leq1,\\
\abs{t}^{-n\delta},& t>1.
\end{array}
\right.
\end{align*}
\end{proposition}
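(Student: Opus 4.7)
The plan is to reduce the Besov estimate to a dispersive bound on each Littlewood--Paley block, the key input being a pointwise control of the frequency-localized kernel
\begin{equation*}
K_{j}(t,x)=\FF\bigl(\varphi_{j}(\xi)\,e^{it(|\xi|^{4}+|\xi|^{2})}\bigr)(x).
\end{equation*}
Writing $\triangle_{j}S_{1}(t)g=K_{j}\ast\widetilde{\triangle}_{j}g$ for a slightly fattened projection $\widetilde{\triangle}_{j}$ with $\widetilde{\triangle}_{j}\triangle_{j}=\triangle_{j}$, Riesz--Thorin interpolation between the trivial $L^{2}\to L^{2}$ bound and the Young inequality $L^{1}\to L^{\infty}$ yields
\begin{equation*}
\norm{\triangle_{j}S_{1}(t)g}_{p}\lesssim \norm{K_{j}(t,\cdot)}_{\infty}^{2\delta}\,\norm{\widetilde{\triangle}_{j}g}_{p'},\qquad\delta=\tfrac{1}{2}-\tfrac{1}{p}.
\end{equation*}
The Proposition then follows once the kernel bound is established and the resulting block inequalities are weighted by $2^{sj}$ and summed in $\ell^{q}_{j}$.

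For the kernel itself I would rescale $\xi=2^{j}\eta$ for $j\geq 1$; the resulting phase $\Phi_{j}(\eta)=2^{j}x\cdot\eta+t\cdot 2^{4j}|\eta|^{4}+t\cdot 2^{2j}|\eta|^{2}$ has Hessian
\begin{equation*}
D^{2}\Phi_{j}(\eta)=4t\cdot 2^{4j}\bigl(|\eta|^{2}I+2\eta\otimes\eta\bigr)+2t\cdot 2^{2j}I,
\end{equation*}
which is a sum of two positive semi-definite matrices; one may use whichever dominates. For $j\geq 1$ the quartic term is dominant on $|\eta|\sim 1$, giving $\det D^{2}\Phi_{j}\gtrsim (t\cdot 2^{4j})^{n}$ whenever $t\cdot 2^{4j}\gtrsim 1$; for the low block $j=0$ the quartic contribution degenerates near $\eta=0$, but the quadratic part alone yields $D^{2}\Phi_{0}\geq 2tI$, hence $\det\gtrsim t^{n}$ whenever $t\gtrsim 1$. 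Standard stationary phase, combined with the trivial $O(1)$ bound in the low-oscillation regime, then produces
\begin{equation*}
\norm{K_{j}(t,\cdot)}_{\infty}\lesssim 2^{jn}\min\bigl(1,(t\cdot 2^{4j}+t\cdot 2^{2j})^{-n/2}\bigr).
\end{equation*}
This kernel estimate is the main technical step and the principal obstacle, as it must reconcile the competing quadratic/quartic regimes of the phase and absorb the degeneracy of the quartic Hessian near the origin.

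Inserting this bound into the interpolation inequality produces two regimes. In the \emph{high} regime $t\cdot 2^{4j}\gtrsim 1$ (automatic when $t>1$) one obtains $\norm{\triangle_{j}S_{1}(t)g}_{p}\lesssim t^{-n\delta}2^{-2n\delta j}\norm{\widetilde{\triangle}_{j}g}_{p'}$; in the \emph{low} regime $t\cdot 2^{4j}\lesssim 1$ (relevant only when $t\leq 1$ and $j\leq j_{0}:=\tfrac{1}{4}\log_{2}(1/t)$) one has the trivial $\norm{\triangle_{j}S_{1}(t)g}_{p}\lesssim 2^{2n\delta j}\norm{\widetilde{\triangle}_{j}g}_{p'}$. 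Multiplying each inequality by $2^{sj}$ and demanding it be dominated by $k(t)\,2^{s'j}$ reduces, under the hypothesis $s'-s\geq-2n\delta$, to a single worst-case comparison at $j=j_{0}$ (for $0<t\leq 1$) or at $j=0$ (for $t>1$). A direct computation then shows the balance is met exactly when $k(t)\geq\max\bigl(1,t^{(s'-s-2n\delta)/4}\bigr)=t^{\frac{1}{4}\min(s'-s-2n\delta,0)}$ on $(0,1]$ and $k(t)\geq t^{-n\delta}$ on $(1,\infty)$, matching the expressions in the Proposition. A final $\ell^{q}_{j}$-summation, using that $\widetilde{\triangle}_{j}$ only mildly broadens the frequency support so that $\sum_{j}2^{s'jq}\norm{\widetilde{\triangle}_{j}g}_{p'}^{q}\lesssim\norm{g}_{B^{s'}_{p',q}}^{q}$, completes the proof.
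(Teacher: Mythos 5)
The paper does not prove Proposition \ref{pp}; it is simply quoted from Guo, Peng and Wang \cite{Guo}, so there is no in-text argument to compare against. That said, your approach is the standard one for Besov-space dispersive bounds of this type, and it is almost certainly the route taken in \cite{Guo} as well: Littlewood--Paley localization, a frequency-localized kernel estimate by stationary phase, Riesz--Thorin interpolation between the $L^{2}\to L^{2}$ multiplier bound and the $L^{1}\to L^{\infty}$ Young bound giving the $\norm{K_j}_{\infty}^{2\delta}$ factor, and then an $\ell^{q}_{j}$ bookkeeping of the exponents. Your exponent arithmetic is correct: in the high-oscillation regime the block estimate is $t^{-n\delta}2^{(s-s'-2n\delta)j}$, in the low-oscillation regime it is $2^{(s-s'+2n\delta)j}$, and under $s-s'-2n\delta\leq 0$ the worst block is $j\sim\frac14\log_{2}(1/t)$ for $0<t\leq 1$ (resp.\ $j=0$ for $t>1$), which reproduces $k(t)=t^{\frac14\min(s'-s-2n\delta,0)}$ and $k(t)=t^{-n\delta}$. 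You also correctly identify the only genuinely delicate point, the $j=0$ block, where the quartic Hessian $8\xi\otimes\xi+4|\xi|^{2}I$ degenerates at $\xi=0$ but the quadratic contribution $2I$ keeps $D^{2}_{\xi}\Phi\geq 2tI$ nondegenerate, so the threshold $t\gtrsim 1$ and the decay $t^{-n/2}$ are the same as what the naive scaling formula predicts at $j=0$. Two small remarks: (i) the kernel bound you state is equivalent to the perhaps more transparent $\norm{K_{j}(t,\cdot)}_{\infty}\lesssim\min(2^{jn},(t2^{2j})^{-n/2})$ (for $j\geq 1$), since $2^{jn}(t2^{4j})^{-n/2}=(t2^{2j})^{-n/2}$; and (ii) for a cleaner write-up one could instead quote the fixed-time $L^{p'}\to L^{p}$ decay of $S_{1}(t)$ directly from Ben-Artzi--Koch--Saut \cite{Ben}, apply it to each $\triangle_{j}$ block with the Bernstein gain $2^{2n\delta j}$ coming from the frequency localization, and recover exactly your two-regime block inequality without re-deriving the stationary-phase kernel estimate.
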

In particular, if we choose $s'=s=0$ in the above proposition, then
we have
\begin{equation*}
\norm{S_{1}(t)g}_{B^{0}_{p,q}}\lesssim k(t)\norm{g}_{B^{0}_{p',q}},
\end{equation*}
where
\begin{align}\label{Guo4}
k(t)=\left\{
\begin{array}{ll}
\abs{t}^{-\frac{1}{2}n\delta},& 0<t\leq1,\\
\abs{t}^{-n\delta},& t>1.
\end{array}
\right. \
\end{align}
Combining \eqref{Guo4} with the Strichartz estimate in \cite{Guo},
we have an especial version proposition of Guo, Peng and Wang
\cite{Guo}.
\begin{proposition}\label{Guo1}
Let $A_{1}f=\int_{0}^{t}S_{1}(t-\tau)f(\tau)d\tau$, for $2\leq
p,q\leq\infty$,
$\frac{n}{2}(\frac{1}{2}-\frac{1}{p})<\frac{2}{q}<n(\frac{1}{2}-\frac{1}{p})$,
\begin{align*}
\norm{S_{1}(t)f}_{L^{q}_{t}L^{p}_{x}(\R\times
\R^n)}\lesssim &\norm{f}_{L^{2}},\\
\norm{A_{1}f}_{L^{q}_{t}L^{p}_{x}(\R\times \R^n)}\lesssim
&\norm{f}_{L^{q'}_{t}L^{p'}_{x}(\R\times \R^n)},\\
\norm{A_{1}f}_{L^{\infty}_{t}L^{2}_{x}(\R\times \R^n)}\lesssim
&\norm{f}_{L^{q'}_{t}L^{p'}_{x}(\R\times \R^n)},\\
\norm{A_{1}f}_{L^{q}_{t}L^{p}_{x}(\R\times \R^n)}\lesssim
&\norm{f}_{L^{1}_{t}L^{2}_{x}(\R\times \R^n)}.
\end{align*}
\end{proposition}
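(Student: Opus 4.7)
The plan is to derive the homogeneous Strichartz estimate (the first inequality) by a $TT^*$ argument built on the two-scale dispersive bound from Proposition \ref{pp}, and then to obtain the three remaining estimates as standard consequences via duality, Minkowski's inequality, and the Christ--Kiselev lemma.

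First, set $Tf(t,\cdot):=S_1(t)f$. Its formal adjoint with respect to the natural time-space pairing is $T^*g=\int_\R S_1(-\tau)g(\tau,\cdot)\,d\tau$, so
\[
TT^*g(t,\cdot)=\int_\R S_1(t-\tau)g(\tau,\cdot)\,d\tau.
\]
Specializing Proposition \ref{pp} to $s=s'=0$ and using the Littlewood--Paley embeddings $L^{p'}\hookrightarrow B^0_{p',2}$ and $B^0_{p,2}\hookrightarrow L^p$ valid for $2\leq p\leq\infty$, one obtains the dispersive decay $\|S_1(t-\tau)g(\tau,\cdot)\|_{L^p_x}\lesssim k(t-\tau)\|g(\tau,\cdot)\|_{L^{p'}_x}$ with $k$ as in \eqref{Guo4}. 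Minkowski then gives $\|TT^*g(t,\cdot)\|_{L^p_x}\leq(k\star h)(t)$ where $h(\tau):=\|g(\tau,\cdot)\|_{L^{p'}_x}$, so by the $TT^*$ principle the first estimate reduces to the time-convolution bound $\|k\star h\|_{L^q_t}\lesssim\|h\|_{L^{q'}_t}$.

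To prove this, I would decompose $k=k_0+k_\infty$ with $k_0(t)=|t|^{-n\delta/2}\chi_{|t|\leq 1}$ and $k_\infty(t)=|t|^{-n\delta}\chi_{|t|>1}$, and apply Young's inequality $\|f\star h\|_{L^q}\lesssim\|f\|_{L^r}\|h\|_{L^{q'}}$ with $1+1/q=1/q'+1/r$, i.e., $r=q/2$. A direct computation shows $k_0\in L^{q/2}(\R)$ exactly when $(q/2)(n\delta/2)<1$, equivalently $1/q>n\delta/4$, and $k_\infty\in L^{q/2}(\R)$ exactly when $(q/2)(n\delta)>1$, equivalently $1/q<n\delta/2$. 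These two conditions are precisely $\frac{n}{2}(\frac{1}{2}-\frac{1}{p})<\frac{2}{q}<n(\frac{1}{2}-\frac{1}{p})$ in the hypothesis, and Young's inequality then delivers the desired bound, completing the proof of the first estimate.

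The remaining three estimates follow by standard soft arguments. The third, $\|A_1f\|_{L^\infty_tL^2_x}\lesssim\|f\|_{L^{q'}_tL^{p'}_x}$, is obtained by duality and unitarity: writing $A_1f(t)=S_1(t)\int_0^tS_1(-\tau)f(\tau)\,d\tau$ and pairing against a test function in $L^2_x$ reduces the bound to the first estimate restricted to $[0,t]$. The fourth, $\|A_1f\|_{L^q_tL^p_x}\lesssim\|f\|_{L^1_tL^2_x}$, follows by Minkowski's integral inequality and translation invariance in $t$ applied to the first estimate. Finally, the second (full retarded) estimate is a consequence of the $TT^*$ bound above via the Christ--Kiselev lemma, which applies because $q>q'$ throughout the admissible range. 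The main obstacle I anticipate is the kernel-splitting step: the admissibility interval is narrow and must be produced exactly by the two regimes of $k(t)$, with the short-time $|t|^{-n\delta/2}$ piece accounting for the lower bound $1/q>n\delta/4$ and the long-time $|t|^{-n\delta}$ piece accounting for the upper bound $1/q<n\delta/2$. Care is also needed to check the Littlewood--Paley embeddings that convert the Besov form of the dispersive estimate in Proposition \ref{pp} to the $L^p$-form required here; for $p\in[2,\infty]$ these are routine but must be invoked to avoid a logical gap.
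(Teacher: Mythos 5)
The paper itself gives no proof of Proposition~\ref{Guo1}: it simply declares it ``an especial version proposition of Guo, Peng and Wang \cite{Guo},'' obtained by combining the two-scale decay \eqref{Guo4} with the Strichartz machinery of that reference. Your $TT^*$ argument built on Young's inequality for the kernel $k$ is therefore a genuine reconstruction rather than a rederivation of an argument in the paper, and its core is sound: the reduction of the first estimate to $\|k\star h\|_{L^q_t}\lesssim\|h\|_{L^{q'}_t}$, the split $k=k_0+k_\infty$, and the computation showing $k_0\in L^{q/2}$ iff $\tfrac{n}{2}\delta<\tfrac{2}{q}$ while $k_\infty\in L^{q/2}$ iff $\tfrac{2}{q}<n\delta$, all check out and exactly reproduce the open admissibility window in the statement.

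There is, however, one genuine error in the passage from the full integral to the retarded one. You assert that ``$q>q'$ throughout the admissible range,'' so that the Christ--Kiselev lemma applies. That is false: taking $q=2$ (so $q'=2$) is allowed by the hypothesis whenever $\tfrac{1}{n}<\delta<\tfrac{2}{n}$, and the paper in fact relies on this case---Remark~\ref{rm} invokes Proposition~\ref{Guo1} with $q=2$, $p=8$, $n=3$, where $\delta=3/8\in(1/3,2/3)$. At $q=q'=2$ the Christ--Kiselev lemma does not apply, so your proof of the second estimate breaks exactly in a case the paper needs. Fortunately the fix is cheap and you already have the tools: because you bounded the kernel by genuine $L^{q/2}$ membership (Young's inequality) rather than by a weak-type Hardy--Littlewood--Sobolev bound, the same computation applies verbatim to the one-sided kernel $\tilde k(s)=k(s)\chi_{[0,\infty)}(s)$, since $\|\tilde k\|_{L^{q/2}}\le\|k\|_{L^{q/2}}$. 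Thus $\|A_1 f(t)\|_{L^p_x}\le(\tilde k\star h)(t)$ with $h(\tau)=\|f(\tau)\|_{L^{p'}_x}$, and Young's inequality gives the retarded estimate directly for every admissible $q\ge 2$, including $q=2$, with no appeal to Christ--Kiselev at all. You should replace that step accordingly. Two minor remarks: (i) your deduction of the $L^p$--$L^{p'}$ dispersive bound from Proposition~\ref{pp} via $L^{p'}\hookrightarrow B^0_{p',2}$ and $B^0_{p,2}\hookrightarrow L^p$ is fine for $2\le p<\infty$ but degenerates at $p=\infty$ (where $B^0_{\infty,2}\not\hookrightarrow L^\infty$); the paper sidesteps this by quoting the $L^p$--$L^{p'}$ form directly in \eqref{decays}, and in any event the admissible window is empty at $p=\infty$ unless $2/q<n/2$ with $q\ge 2$, so this is not a live issue here; (ii) your duality proof of the third estimate and Minkowski proof of the fourth are both correct and standard.
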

When $\eps=1$, the endpoint Strichartz estimate also hold. In fact,
we have
\begin{proposition}\label{eps}
Let $n\geq5$, $2\leq p$, $\rho\leq 2n/(n-4)$$(2\leq p$,
$\rho<\infty$, if $n=4)$, $4/\gamma(\cdot)=n(1/2-1/\cdot)$.  We have
\begin{align}
\norm{S_{1}(t)u_0}_{L^{\gamma(p)}(\R,L^{p}(\R^{n}))}\lesssim&\norm{u_0}_{L^{2}(\R^{n})},\label{str3}\\
\norm{\mathscr{A}_{1}F}_{L^{\gamma(p)}(\R,L^{p}(\R^{n}))}\lesssim&\norm{F}_{L^{\gamma(\rho)\prime}(\R,L^{\rho\prime}(\R^{n}))}.\label{str4}
\end{align}
\end{proposition}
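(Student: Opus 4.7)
My plan is to derive Proposition \ref{eps} from the abstract endpoint Strichartz theorem of Keel--Tao applied to the unitary group $S_1(t)$. The input needed is a uniform-in-$t$ dispersive bound
\begin{equation*}
\|S_1(t)g\|_{L^\infty(\R^n)}\lesssim |t|^{-n/4}\|g\|_{L^1(\R^n)},
\end{equation*}
which I would extract by splicing together the two regimes of the weight $k(t)$ in Proposition \ref{pp}: near $t=0$ one has $|t|^{-n\delta/2}=|t|^{-n/4}$ (the choice $\delta=1/2$ corresponds to the $L^1\to L^\infty$ exponent pair), while for $|t|\ge 1$ the Schr\"odinger-type decay $|t|^{-n/2}$ is strictly stronger than $|t|^{-n/4}$, so the biharmonic rate serves as a uniform upper bound for all $t\neq 0$. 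One could alternatively quote the fundamental-solution estimates of Ben--Artzi--Koch--Saut \cite{Ben}. Together with the $L^2$-unitarity $\|S_1(t)g\|_{L^2}=\|g\|_{L^2}$, this supplies the two hypotheses needed by the Keel--Tao framework with dispersive exponent $\sigma=n/4$.

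I would then invoke the Keel--Tao theorem with $\sigma=n/4$: its sharp admissibility relation $1/q=\sigma(1/2-1/p)$ rearranges exactly to $4/q=n(1/2-1/p)$, i.e.\ $q=\gamma(p)$. This delivers \eqref{str3} for every $(q,p)$ with $q\ge 2$, $p\ge 2$ satisfying this relation, in particular on the full range $2\le p\le 2n/(n-4)$ when $n\ge 5$ (respectively $2\le p<\infty$ when $n=4$). The forbidden corner in the Keel--Tao theorem is $(q,p,\sigma)=(2,\infty,1)$; for $n\ge 5$ one has $\sigma=n/4>1$ and $p\le 2n/(n-4)<\infty$, while for $n=4$ the hypothesis $p<\infty$ is exactly what avoids the degenerate corner. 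The inhomogeneous estimate \eqref{str4} is obtained from the same $TT^*$ bilinear form
\begin{equation*}
B(F,G)=\iint_{s<t}\langle S_1(t-s)F(s),G(t)\rangle\,ds\,dt
\end{equation*}
paired against $(\gamma(\rho)',\rho')$ and $(q',p')$; Keel--Tao's argument shows this bilinear form is bounded even at the double-endpoint.

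The only real obstacle is the endpoint $(q,p)=(2,2n/(n-4))$ itself, where $TT^*$ plus Hardy--Littlewood--Sobolev and the Christ--Kiselev lemma both fail. Away from it, Proposition \ref{Guo1} already covers the strict admissibility range, and the missing non-endpoint estimates can alternatively be recovered by complex interpolation between the Hardy--Littlewood--Sobolev output and the trivial $(\infty,2)$ pair. Handling the endpoint itself requires the atomic decomposition and bilinear interpolation of Keel--Tao, whose hypotheses are fulfilled since $\sigma=n/4>1$ in the allowed dimensions; this is the single step that justifies labelling Proposition \ref{eps} the \emph{endpoint} Strichartz estimate for $S_1(t)$.
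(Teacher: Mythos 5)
Your proposal is correct but takes a genuinely different, and cleaner, route than the paper. The paper does not reduce to a single power-law decay: it keeps the two-regime bound $\norm{S_1(t)u_0}_p\lesssim(|t|^{n/2}+|t|^{n/4})^{2/p-1}\norm{u_0}_{p'}$ intact, dyadically decomposes the $TT^*$ bilinear form $L(F,G)=\sum_j L_j(F,G)$ by hand, obtains a two-exponent bound $|L_j(F,G)|\lesssim[(2^jT)^{\beta_1}+(2^jT)^{\beta_2}]^{-1}\norm{F}\norm{G}$ tracking the two regimes, and then runs two separate applications of the Bergh--L\"ofstr\"om bilinear interpolation lemma. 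You instead observe that for $|t|\geq1$ the Schr\"odinger rate $|t|^{-n/2}$ is strictly stronger than the biharmonic rate $|t|^{-n/4}$, so $(|t|^{n/2}+|t|^{n/4})^{-1}\lesssim|t|^{-n/4}$ uniformly in $t\neq0$, and feed this single power-law dispersive estimate with exponent $\sigma=n/4$ into the abstract Keel--Tao theorem as a black box. This is shorter and delivers exactly the statement of Proposition \ref{eps}, including the endpoint for $n\geq5$ (where $\sigma>1$) and the open range for $n=4$. What your reduction gives up is the possibility of exploiting the faster large-time decay to enlarge the family of admissible pairs beyond the biharmonic line $q=\gamma(p)$; the paper's two-regime bookkeeping could in principle recover that, but Proposition \ref{eps} never uses it. One small imprecision worth flagging: you say Proposition \ref{Guo1} ``already covers the strict admissibility range'' away from the endpoint, but \ref{Guo1} is stated for $\frac{n}{2}(\frac12-\frac1p)<\frac2q<n(\frac12-\frac1p)$, which is the open interior of the admissible region and strictly excludes the line $q=\gamma(p)$ on which \ref{eps} lives; the non-endpoint pairs on that line are not a consequence of \ref{Guo1} but come, as you also say, from $TT^*$ plus Hardy--Littlewood--Sobolev, which is built into the non-endpoint Keel--Tao argument anyway.
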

\begin{proof}
Because the proof is similar to Keel and Tao \cite{Tao2}, we only
give the sketch. Let $2^{**}=2n/(n-4)$, we need to prove
\begin{equation}
\norm{S_{1}(t)u_{0}}_{L^{2}(\R,L^{2^{**}})}\lesssim\norm{u_{0}}_{L^2}
\end{equation}
let $u_{0}=\varphi$, for $\psi \in C^{\infty}$. According to dual
estimate, we need to prove
\begin{equation}\label{turn1}
\abs{\int_{-T}^{-T}(S_{1}(t)\varphi,\psi(t))dt}\lesssim\norm{\varphi}_{L^2}\norm{\psi}_{L^{2}(-T,T;L^{(2^{**})'})}
\end{equation}
By the $TT^*$ method  and symmetry, (\ref{turn1}) is in turn
equivalent to the bilinear form estimate
\begin{equation}\label{turn2}
\abs{L(F,G)}\lesssim\norm{F}_{L^{2}(-T,T;L^{(2^{**})'})}\norm{G}_{L^{2}(-T,T;L^{(2^{**})'})}
\end{equation}
Here $L(F,G)=\int\int_{D}(S_{1}(t-s)F(s),G(t))dsdt, D=\{(s,t)\in
[-T,T]^{2}, s\leq t\}$.

Now we  take a procedure which is suit our proof, one can see Wang
\cite{Wangbook} for details. We decompose $L(F,G)$ dyadically as
$\sum_{j}L_{j}(F,G)$. Here
\begin{equation}\label{turn3}
L_{j}(F,G)=\int\int_{D_{j}}(S_{1}(t-s)F(s),G(t))dsdt
\end{equation}
For simplicity, we assume $F,G \in C_{0}^{\infty}(I), I=[-T,T]$.
From Proposition \ref{pp},  for $\frac{1}{p}+\frac{1}{p'}=1, 2\leq
p\leq\infty$, we have
\begin{equation}\label{decays}
\norm{S_{1}(t)u_{0}}_{p}\lesssim(\abs{t}^{\frac{n}{2}}+\abs{t}^{\frac{n}{4}})^{\frac{2}{p}-1}\norm{u_{0}}_{p'}
\end{equation}
Using (\ref{decays}), similar to lemma 4.1 in \cite{Tao2},  we can
prove
\begin{equation}\label{turn4}
\abs{L_{j}(F,G)}\lesssim[(2^{j}T)^{\beta_{1}(a,b)}+(2^{j}T)^{\beta_{2}(a,b)}]^{-1}\norm{F}_{L^{2}_{I}(L^{a'})}\norm{G}_{L^{2}_{I}(L^{b'})}
\end{equation}
holds for all $j\in\Z$ and all $(\frac{1}{a},\frac{1}{b})$ in a
neighbourhood of $(\frac{1}{2^{**}},\frac{1}{2^{**}})$.

 Here
 $\beta_{1}(a,b)=\frac{2}{\gamma(b)}+\frac{2}{2\gamma(\frac{2a}{b})}-1$
 and
 $\beta_{1}(a,b)=\frac{4}{\gamma(b)}+\frac{4}{2\gamma(\frac{2a}{b})}-1$.
For later bilinear interpolation, we need
\begin{lemma}(\cite{bergh}, Section 3.13.5(b)) If $A_{0}, A_{1}, B_{0}, B_{1}, C_{0},
C_{1}$ are Banach spaces, and the bilinear operator $T$ is bounded
from
\begin{eqnarray*}
T:A_{0}\times B_{0}\rightarrow C_{0}\\
T:A_{0}\times B_{1}\rightarrow C_{1}\\
T:A_{1}\times B_{0}\rightarrow C_{1}
\end{eqnarray*}
then whenever $0<\theta_{0}, \theta_{1}<1, 1\leq p, q, r\leq\infty$
are such that $1\leq\frac{1}{p}+\frac{1}{q}$ and
$\theta=\theta_{0}+\theta_{1}$, one has
$$T:(A_{0},A_{1})_{\theta_{0}, pr}\times(B_{0},B_{1})_{\theta_{1},
qr}\rightarrow(C_{0},C_{1})_{\theta, r}$$.
\end{lemma}
To get the result we need, let
$A_{0}=B_{0}=L^{2}_{t}(L^{a_{0}'}_{x})$ and
$A_{1}=B_{1}=L^{2}_{t}(L^{a_{1}'}_{x})$. We should make the
following equations hold
\begin{align*} \left\{
\begin{array}{l}
\beta_{1}(a_{0}, a_{0})(1-\theta)+\beta_{1}(a_{0}, a_{1})\theta=0\\
\frac{1}{(2^{**})'}=\frac{1-\theta_{1}}{a_{0}'}+\frac{\theta_{2}}{a_{1}'}\\
\theta_{1}+\theta_{2}=\theta
\end{array}
\right.
\end{align*}
There are enough space to choose $\theta_{1}, \theta_{2}$. For
example, we can select $\theta_{1}=\theta_{2}=\frac{1}{3}$ to make
the above equations hold.

Moreover, we also need
\begin{align*} \left\{
\begin{array}{l}
\beta_{2}(a_{0}, a_{0})(1-\eta)+\beta_{2}(a_{0}, a_{1})\eta=0\\
\frac{1}{(2^{**})'}=\frac{1-\eta_{1}}{a_{0}'}+\frac{\eta_{2}}{a_{1}'}\\
\eta_{1}+\eta_{2}=\eta
\end{array}
\right.
\end{align*}
the situation is similar, we omit it.
\end{proof}

For the semi-group $S_{0}(t)$, we have the following Strichartz
estimate (cf.\cite{Tao2}):
\begin{proposition}\label{p24}
Let $n\geq4$, $2\leq p$, $\rho\leq 2n/(n-4)$$(2\leq p$,
$\rho<\infty$, if $n=4)$, $4/\gamma(\cdot)=n(1/2-1/\cdot)$.  We have
\begin{align}
\norm{S_{0}(t)u_0}_{L^{\gamma(p)}(\R,L^{p}(\R^{n}))}\lesssim&\norm{u_0}_{L^{2}(\R^{n})},\label{str3}\\
\norm{\mathscr{A}_{0}F}_{L^{\gamma(p)}(\R,L^{p}(\R^{n}))}\lesssim&\norm{F}_{L^{\gamma(\rho)\prime}(\R,L^{\rho\prime}(\R^{n}))}.\label{str4}
\end{align}
\end{proposition}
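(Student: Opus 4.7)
The plan is to reduce Proposition \ref{p24} to the abstract Strichartz theorem of Keel--Tao \cite{Tao2}, so that the only genuine work is to establish the dispersive decay for the free semi-group $S_0(t)=e^{it\Delta^{2}}$. Explicitly, I would first prove
\[
\norm{S_0(t)u_0}_{L^{\infty}(\R^{n})} \lesssim |t|^{-n/4}\,\norm{u_0}_{L^{1}(\R^{n})},\qquad t\neq 0.
\]
Writing $S_0(t)u_0 = K_t\ast u_0$ with $K_t(x)=\FF[e^{it|\xi|^{4}}](x)$, the scaling $\xi\mapsto|t|^{-1/4}\xi$ yields $K_t(x) = |t|^{-n/4}K_{\pm1}(|t|^{-1/4}x)$, so it suffices to check that $K_{\pm1}\in L^{\infty}(\R^{n})$. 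This follows from a standard stationary/non-stationary phase argument for $\int_{\R^{n}} e^{i(x\cdot\xi\pm|\xi|^{4})}d\xi$: a dyadic decomposition in $\xi$ combined with van der Corput--type bounds controls each shell, the Hessian of the phase being non-degenerate of size $\sim|\xi|^{2}$ away from the origin.

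With the dispersive estimate in hand, together with the $L^{2}$-unitarity $\norm{S_0(t)u_0}_{L^{2}}=\norm{u_0}_{L^{2}}$, we are exactly in the hypotheses of the Keel--Tao abstract Strichartz theorem with dispersion parameter $\sigma=n/4$. A pair $(q,r)$ is then admissible provided $q,r\geq 2$, $(q,r,\sigma)\neq(2,\infty,1)$, and $1/q=\sigma(1/2-1/r)$. With $\sigma=n/4$ this last identity becomes $4/q=n(1/2-1/r)$, which is precisely the relation $q=\gamma(r)$ from the statement, and the range $r\in[2, 2n/(n-4)]$ is forced by $q\geq 2$. Keel--Tao then delivers both the homogeneous bound \eqref{str3} and the inhomogeneous bound \eqref{str4} at once, via a standard $TT^{*}$ and bilinear interpolation procedure analogous to the one already carried out for $\varepsilon=1$ in the proof of Proposition \ref{eps}.

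The only delicate point, and what I would flag as the main obstacle, is the dimension endpoint. For $n\geq 5$ one has $\sigma=n/4>1$ and the sharp endpoint version of Keel--Tao applies, so the whole range $p,\rho\in[2, 2n/(n-4)]$ is allowed, including the double endpoint $(\gamma(p),p)=(2,2n/(n-4))$. When $n=4$, however, $\sigma=1$ sits exactly on the excluded diagonal of Keel--Tao, so the endpoint pair $(q,r)=(2,\infty)$ must be removed; this accounts precisely for the restriction $p,\rho<\infty$ imposed in the statement when $n=4$. Since the argument otherwise parallels the one already written out in detail for $\varepsilon=1$, the proof can be kept brief and essentially reduced to a reference to \cite{Tao2}.
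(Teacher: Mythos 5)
Your argument is correct and is precisely the route the paper intends: Proposition~\ref{p24} is stated with only the citation to Keel--Tao \cite{Tao2}, and your reduction — the scaling identity $K_t(x)=|t|^{-n/4}K_{\pm 1}(|t|^{-1/4}x)$, the boundedness of $K_{\pm 1}$ via stationary phase, and then the abstract Keel--Tao machinery with $\sigma=n/4$, including the exclusion of the $(2,\infty)$ pair when $n=4$ — is exactly what that citation supplies. No gap; this is the standard proof the paper delegates to the reference.
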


If both $p$ and $\rho$ are equal to $2n/(n-4)$, then \eqref{str3}
and \eqref{str4} are said to be the endpoint Strichartz estimates.
Using Proposition \ref{Guo1}-\ref{p24}, we have
\begin{proposition}\label{p25}
Let $2^{*}=2+8/n$. For any $p\geq2^{*},s>n/2$, we have
\begin{equation*}
\left(\sum_{\alpha\in\mathbb{Z}^{n}}\norm{S_{\eps}(t)u_{0}}^{p}_{L^{\infty}_{t,x}(\R\times
Q_{\alpha})}\right)^{1/p}\lesssim\norm{u_0}_{H^{s}}
\end{equation*}
\end{proposition}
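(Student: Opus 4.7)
The plan is to reduce the maximal function estimate to a space-time integral bound via Lemma \ref{l23}, and then combine endpoint Strichartz with dyadic Bernstein inequalities in both the space and time variables. Applied to $u(t,x)=S_{\eps}(t)u_{0}$ with an exponent $s_{0}>1/p$, Lemma \ref{l23} reduces matters to showing
\[
\norm{(I-\partial_{t}^{2})^{s_{0}/2}S_{\eps}(t)u_{0}}_{L^{p}(\R,B^{ns_{0}}_{p,p})}\lesssim\norm{u_{0}}_{H^{s}}.
\]
Because $S_{\eps}$ is the unitary group generated by $i(\Delta^{2}-\eps\Delta)$, the temporal Bessel potential commutes through as a spatial Fourier multiplier,
\[
(I-\partial_{t}^{2})^{s_{0}/2}S_{\eps}(t)=S_{\eps}(t)\,(I+(\Delta^{2}-\eps\Delta)^{2})^{s_{0}/2},
\]
which on the Littlewood--Paley block $\triangle_{j}$ (for $j\geq 1$) acts like multiplication by $2^{4js_{0}}$.

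The heart of the argument is the dyadic estimate
\[
\norm{\triangle_{j}S_{\eps}(t)u_{0}}_{L^{p}_{t,x}}\lesssim 2^{j(n+4)(1/2^{*}-1/p)}\norm{\triangle_{j}u_{0}}_{L^{2}}.
\]
I would prove it in three steps. First, Bernstein in $x$ (using the spatial localization $|\xi|\sim 2^{j}$) moves $L^{p}_{x}$ to $L^{2^{*}}_{x}$ at the price of $2^{jn(1/2^{*}-1/p)}$. Second, Minkowski's inequality (valid since $p\geq 2^{*}$) swaps the time and space norms. Third, the dispersion relation $\tau=|\xi|^{4}+\eps|\xi|^{2}$ shows that $\triangle_{j}S_{\eps}u_{0}(\cdot,x)$ has temporal Fourier support in an annulus $|\tau|\sim 2^{4j}$; Bernstein in $t$ then converts $L^{p}_{t}$ into $L^{2^{*}}_{t}$ at the price of $2^{4j(1/2^{*}-1/p)}$. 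The remaining quantity $\norm{\triangle_{j}S_{\eps}u_{0}}_{L^{2^{*}}_{t,x}}$ is controlled by $\norm{\triangle_{j}u_{0}}_{L^{2}}$ via the endpoint Strichartz estimates (Proposition \ref{p24} for $\eps=0$ and Proposition \ref{eps} for $\eps=1$).

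Substituting the dyadic bound into the Besov norm and using Fubini (both the time and the space exponents equal $p$),
\[
\norm{(I-\partial_{t}^{2})^{s_{0}/2}S_{\eps}u_{0}}^{p}_{L^{p}_{t}B^{ns_{0}}_{p,p}}\lesssim\sum_{j\geq 0}2^{jp(n+4)(s_{0}+1/2^{*}-1/p)}\norm{\triangle_{j}u_{0}}^{p}_{L^{2}}.
\]
The exponent $(n+4)(s_{0}+1/2^{*}-1/p)$ decreases to the critical value $(n+4)/2^{*}=n/2$ as $s_{0}\downarrow 1/p$. For any $s>n/2$, choosing $s_{0}$ slightly above $1/p$ makes this exponent strictly less than $s$, and the embedding $\ell^{2}\hookrightarrow\ell^{p}$ (valid since $p\geq 2^{*}>2$) then dominates the sum by $\norm{u_{0}}^{p}_{H^{s}}$.

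The hardest step is the Bernstein inequality in $t$: one must carefully exploit the dispersion of the fourth-order semigroup to identify the single-variable time-Fourier support of $\triangle_{j}S_{\eps}u_{0}(\cdot,x)$ as an annulus of size $\sim 2^{4j}$. The zero-frequency block ($j=0$) should be handled separately by a direct estimate on the smooth bounded semigroup action, and one should verify that the endpoint Strichartz at $p=2^{*}$ is actually available in the dimensions considered; for $n\geq 4$ this is covered by Propositions \ref{eps} and \ref{p24}, while marginal cases may require a small perturbation of the exponent.
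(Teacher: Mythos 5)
Your proof is correct and reaches the stated bound, but it takes a genuinely different route from the paper. Both proofs rest on Lemma~\ref{l23}, on the observation that $\langle\partial_t\rangle^{s_0}$ commuted through $S_\eps(t)$ becomes the spatial multiplier $\langle h_\eps(\xi)\rangle^{s_0}$ (the paper encodes exactly this fact in its convolution identity \eqref{convolution}), and on the diagonal Strichartz estimate $\norm{S_\eps u_0}_{L^{2^*}_{t,x}}\lesssim\norm{u_0}_{L^2}$. The paper reduces first to $p=2^*$, inserts a temporal dyadic decomposition $\varphi_j(\tau)$, transfers it to the spatial side as $\varphi_j(h_\eps(\xi))$, and finishes through Strichartz, Plancherel, the embedding $H^{(n+4)s_0}\subset B^{(n+4)s_0}_{2,2^*}$ and a H\"older weight $2^{4j\rho}$. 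You skip the temporal dyadic decomposition entirely, commute the full Bessel potential once, and replace the embedding gymnastics by the single explicit dyadic bound
\[
\norm{\triangle_j S_\eps(t)u_0}_{L^p_{t,x}}\lesssim 2^{j(n+4)(1/2^*-1/p)}\norm{\triangle_j u_0}_{L^{2}},
\]
obtained by Bernstein in $x$, Minkowski (valid in your direction since $p\geq 2^*$), Bernstein in $t$ (using the temporal Fourier support $\tau\sim 2^{4j}$), and Strichartz at the diagonal. This is equivalent to applying Bernstein in $x$ from $L^p_x$ to $L^r_x$ followed by the admissible pair $(p,r)$ with $4/p+n/r=n/2$, but either way the argument is sound and exposes the threshold $s>n/2$ cleanly through the algebraic identity $(n+4)/2^*=n/2$, which the paper's version obscures in nested sums. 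The caveats you flag about a Mikhlin-type bound for the multiplier $(1+h_\eps(\xi)^2)^{s_0/2}$ on $\triangle_j$ and the $j=0$ block are exactly the right ones. Your closing worry about the endpoint is, however, misplaced: the diagonal pair $L^{2^*}_{t,x}$ is \emph{not} the endpoint (that is $L^2_t L^{2n/(n-4)}_x$), so the Strichartz estimate you need holds for every $n\geq 1$ by the non-endpoint $TT^*$ argument, and no perturbation of exponents is required in low dimensions.
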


\begin{proof}
For short, we write $\langle \partial_t
\rangle=(I-\partial^{2}_{t})^{1/2}$. By Lemma~\ref{l23}, for any
$s_{0}>1/2^{*}$,
\begin{align}
\left(\sum_{\alpha\in\mathbb{Z}^{n}}\norm{S_{\eps}(t)u_{0}}^{p}_{L^{\infty}_{t,x}(\R\times
Q_{\alpha})}\right)^{1/p}\lesssim
&\left(\sum_{\alpha\in\mathbb{Z}^{n}}\norm{S_{\eps}(t)u_{0}}^{2^{*}}_{L^{\infty}_{t,x}(\R\times
Q_{\alpha})}\right)^{1/2^{*}}\nonumber\\
\lesssim &\norm{\langle \partial_t
\rangle^{s_{0}}S_{\eps}(t)u_0}_{L^{2^{*}}(\R,B^{ns_{0}}_{2^{*},2^{*}}(\mathbb{R}^{n}))}.\label{mfp}
\end{align}
We have
\begin{equation*}
\norm{\langle \partial_t
\rangle^{s_{0}}S_{\eps}(t)u_0}_{L^{2^{*}}(\R,B^{ns_{0}}_{2^{*},2^{*}}(\mathbb{R}^{n}))}
=\sum_{k=0}^{\infty}2^{ns_{0}k2^{*}}\norm{\langle
\partial_t
\rangle^{s_{0}}\triangle_{k}S_{\eps}(t)u_0}_{L^{2^{*}}_{t,x}(\R^{1+n})}.
\end{equation*}
Using the dyadic decomposition to the time-frequency, we obtain that
\begin{equation*}
\norm{\langle \partial_t
\rangle^{s_{0}}\triangle_{k}S_{\eps}(t)u_0}_{L^{2^{*}}_{t,x}(\R^{1+n})}\lesssim\sum_{j=0}^{\infty}\norm{\mathscr{F}^{-1}_{t,x}\langle
\tau
\rangle^{s_{0}}\varphi_{j}(\tau)\mathscr{F}_{t}e^{it(\abs{\xi}^{4}+\eps\abs{\xi}^{2})}\varphi_{k}(\xi)
\mathscr{F}_{x}u_{0}}_{L^{2^{*}}_{t,x}(\mathbb{R}^{1+n})}.
\end{equation*}
For convenience, we let
$h_{\eps}(\xi)=\abs{\xi}^{4}+\eps\abs{\xi}^{2}$. Observing that
\begin{equation}
(\mathscr{F}^{-1}_{t}\langle \tau
\rangle^{s_{0}}\varphi_{j}(\tau))\star
e^{ith_{\eps}(\xi)}=ce^{ith_{\eps}(\xi)}\varphi_{j}(h_{\eps}(\xi))\langle
h_{\eps}(\xi)\rangle^{s_0},\label{convolution}
\end{equation}
and using the Strichartz's inequality and Plancherel's identity, we
get
\begin{align}
\norm{\langle \partial_t
\rangle^{s_{0}}\triangle_{k}S_{\eps}(t)u_0}_{L^{2^{*}}_{t,x}(\R^{1+n})}\lesssim
&\sum_{j=0}^{\infty}\norm{S_{\eps}(t)\F^{-1}_{x}\langle h_{\eps}(\xi)\rangle^{s_{0}}\varphi_{j}(h_{\eps}(\xi))\varphi_{k}(\xi)\mathscr{F}_{x}u_{0}}_{L^{2^{*}}_{t,x}(\mathbb{R}^{1+n})}\nonumber\\
\lesssim
&\sum_{j=0}^{\infty}\norm{\F^{-1}_{x}\langle h_{\eps}(\xi)\rangle^{s_{0}}\varphi_{j}(h_{\eps}(\xi))\varphi_{k}(\xi)\mathscr{F}_{x}u_{0}}_{L^{2}_{x}(\R^n)}\nonumber\\
\lesssim
&2^{4ks_{0}}\sum_{j=0}^{\infty}\norm{\F^{-1}_{x}\varphi_{j}(h_{\eps}(\xi))\varphi_{k}(\xi)\mathscr{F}_{x}u_{0}}_{L^{2}_{x}(\R^n)}.\label{2*}
\end{align}
Combining \eqref{mfp} with \eqref{convolution}, together with
Minkowski's inequality, we have
\begin{align}
&\norm{\langle \partial_t \rangle^{s_0}S_{\eps}(t)u_0}_{L^{2^{*}}(\R,B^{ns_{0}}_{2^{*},2^{*}}(\R^n))}\nonumber\\
\lesssim&\sum_{j=0}^{\infty}\left(\sum_{k=0}^{\infty}2^{(n+4)s_{0}k2^{*}}\norm{\mathscr{F}^{-1}_{x}\varphi_{j}(h_{\eps}(\xi))\varphi_{k}(\xi)\mathscr{F}_{x}u_{0}}^{2^*}_{L^{2}_{x}(\R^n)}\right)^{1/2^{*}}\nonumber\\
\lesssim&\sum_{j=0}^{\infty}\norm{\mathscr{F}^{-1}_{x}\varphi_{j}(h_{\eps}(\xi))\mathscr{F}
u_{0}}_{B^{(n+4)s_{0}}_{2,2^{*}}}.\label{le1}
\end{align}
In view of $H^{(n+4)s_0}\subset B^{(n+4)s_{0}}_{2,2^{*}}$ and
H\"{o}lder's inequality, we have for any $\rho>0$,
\begin{align}
\sum_{j=0}^{\infty}\norm{\mathscr{F}^{-1}_{x}\varphi_{j}(h_{\eps}(\xi))\mathscr{F}
u_{0}}_{B^{(n+4)s_{0}}_{2,2^{*}}}\lesssim
&\sum_{j=0}^{\infty}\norm{\mathscr{F}^{-1}_{x}\varphi_{j}(h_{\eps}(\xi))\mathscr{F}
u_{0}}_{H^{(n+4)s_0}}\nonumber\\
\lesssim
&\left(\sum_{j=0}^{\infty}2^{4j\rho}\norm{\mathscr{F}^{-1}_{x}\varphi_{j}(h_{\eps}(\xi))\mathscr{F}
u_{0}}^{2}_{H^{(n+4)s_0}}\right)^{1/2}.\label{le2}
\end{align}
By Plancherel's identity and the fact that $\supp
\varphi_{j}(h_{\eps}(\xi))\subset\{\xi:h_{\eps}(\xi)\in[2^{j-1},2^{j+1}]\}$,
we easily see that
\begin{eqnarray}
\left(\sum_{j=0}^{\infty}2^{4j\rho}\norm{\mathscr{F}^{-1}_{x}\varphi_{j}(h_{\eps}(\xi))\mathscr{F}
u_{0}}^{2}_{H^{(n+4)s_0}}\right)^{1/2}&\lesssim&
\left(\sum_{j=0}^{\infty}\norm{\langle
h_{\eps}(\xi)\rangle^{\rho}\varphi_{j}(h_{\eps}(\xi))\mathscr{F}
u_{0}}^{2}_{H^{(n+4)s_0}}\right)^{1/2}\nonumber\\
&\lesssim&\left(\sum_{j=0}^{\infty}\norm{\varphi_{j}(h_{\eps}(\xi))\mathscr{F}
u_{0}}^{2}_{H^{(n+4)s_{0}+4\rho}}\right)^{1/2}\nonumber\\
&\lesssim &\norm{u_0}_{H^{(n+4)s_{0}+4\rho}}.\label{le3}
\end{eqnarray}
Taking $s_0$ such that $(n+4)s_{0}+4\rho <s$, from
\eqref{2*}-\eqref{le3} we have the result, as desired.
\end{proof}

By the sharp inclusion $H^{s}\subset L^{\infty}$ for $s>n/2$, it is
obvious that Proposition \ref{p25} is optimal in the sense that it
does not hold for $s=n/2$.

Using the ideas as in Lemma \ref{l23} and Proposition \ref{p25}, we
can show
\begin{proposition}\label{p29}
Let $2^{*}\leq p,q,r\leq\infty$, $s_{0}>1/2^{*}-1/q$ and
$s_{1}>n(1/2^{*}-1/r)$. Then we have
\begin{equation}
\left(\sum_{\alpha\in
\mathbb{Z}^n}\norm{S_{\eps}(t)u_0}^{p}_{L^{q}(\R,L^{r}(Q_{\alpha}))}\right)^{1/p}\lesssim\norm{u_0}_{H^{s_{1}+4s_{0}}}.\label{pqr1}
\end{equation}
In particular, for any $q,p\geq 2^{*}$, $s> n/2-4/q$, it holds
\begin{equation*}
\left(\sum_{\alpha\in
\mathbb{Z}^n}\norm{S_{\eps}(t)u_0}^{p}_{L^{q}(\R,L^{\infty}(Q_{\alpha}))}\right)^{1/p}\lesssim\norm{u_0}_{H^{s}}.
\end{equation*}
\end{proposition}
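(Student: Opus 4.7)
The plan is to follow the proof of Proposition \ref{p25} closely, generalizing each step so that $L^\infty_{t,x}$ is replaced by $L^q_t L^r_x$ on each cube, while keeping the Strichartz exponent $2^* = 2 + 8/n$ as the pivot integrability index. First, since $p \geq 2^*$, the inclusion $\ell^{2^*} \hookrightarrow \ell^p$ reduces the exterior sum to the case $p = 2^*$:
\begin{equation*}
\left(\sum_\alpha \|S_\eps(t)u_0\|_{L^q_t L^r_x(\R\times Q_\alpha)}^p\right)^{1/p} \leq \left(\sum_\alpha \|S_\eps(t)u_0\|_{L^q_t L^r_x(\R\times Q_\alpha)}^{2^*}\right)^{1/2^*}.
\end{equation*}

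Next, I would establish a generalization of Lemma \ref{l23} to anisotropic $L^q_t L^r_x$-norms on cubes: for $q,r \geq 2^*$ with $s_0 > 1/2^* - 1/q$ and $s_1 > n(1/2^* - 1/r)$,
\begin{equation*}
\left(\sum_\alpha \|u\|_{L^q_t L^r_x(\R\times Q_\alpha)}^{2^*}\right)^{1/2^*} \lesssim \|\langle\partial_t\rangle^{s_0} u\|_{L^{2^*}(\R,\, B^{s_1}_{2^*,2^*}(\R^n))}.
\end{equation*}
This is a two-parameter localized Sobolev embedding: the spatial half follows from the unit-cube embedding $B^{s_1}_{2^*,2^*}(Q_\alpha) \hookrightarrow L^r(Q_\alpha)$ for $s_1 > n(1/2^* - 1/r)$, combined with the Triebel--Lizorkin characterization of $B^{s_1}_{2^*,2^*}$ that respects the cube decomposition; the temporal half uses $W^{s_0,2^*}(\R) \hookrightarrow L^q(\R)$ for $s_0 > 1/2^* - 1/q$. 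The $\ell^{2^*}_\alpha$ structure is preserved because both embeddings are essentially local.

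Once the initial datum $S_\eps(t)u_0$ is in place, I would decompose spatially via $\triangle_k$ and in the time-frequency variable via $\varphi_j(\tau)$, then apply the identity
\begin{equation*}
(\F^{-1}_t\langle\tau\rangle^{s_0}\varphi_j(\tau))\star e^{ith_\eps(\xi)} = c\,e^{ith_\eps(\xi)}\varphi_j(h_\eps(\xi))\langle h_\eps(\xi)\rangle^{s_0}
\end{equation*}
to transfer the time weight onto the phase, exactly as in \eqref{convolution}. On the frequency annulus $|\xi|\sim 2^k$, this produces a factor $2^{4ks_0}$ from $\langle h_\eps(\xi)\rangle^{s_0}$. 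The Strichartz estimate at the admissible exponent $\gamma(2^*) = 2^*$ (from Propositions \ref{Guo1} and \ref{p24}) then converts the $L^{2^*}_{t,x}$-norm into an $L^2_x$-norm of the frequency-localized initial datum. Finally, I would sum in $j$ using Minkowski's inequality, the embedding $H^\bullet \hookrightarrow B^\bullet_{2,2^*}$, and the support property $\supp\varphi_j(h_\eps(\cdot)) \subset \{h_\eps(\xi) \sim 2^j\}$ together with Plancherel, as in \eqref{le1}--\eqref{le3}, to close with the norm $\|u_0\|_{H^{s_1+4s_0}}$.

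The main obstacle is the generalized Lemma \ref{l23} in Step 2: one must prove a local Sobolev-type embedding in both time and space while retaining the $\ell^{2^*}_\alpha$ sum structure over cubes. All subsequent dyadic and Strichartz manipulations are parallel to the $L^\infty_{t,x}$ case and should go through verbatim, with $s_1$ in place of $ns_0$ (spatial) and the same $s_0$ (temporal), yielding the total regularity $s_1 + 4s_0$ dictated by the fourth-order dispersion relation. The "in particular" case is then the specialization $r = \infty$, for which $s_1 > n/2^*$ and $s_1 + 4s_0 > (n+4)/2^* - 4/q = n/2 - 4/q$.
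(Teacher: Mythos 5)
Your proposal is correct and follows essentially the same route as the paper's sketch: reduce to $p=2^{*}$ via $\ell^{2^*}\hookrightarrow\ell^{p}$, prove the anisotropic local embedding $\bigl(\sum_\alpha\norm{u}^{2^*}_{L^q_tL^r_x(\R\times Q_\alpha)}\bigr)^{1/2^*}\lesssim\norm{\langle\partial_t\rangle^{s_0}u}_{L^{2^*}(\R,B^{s_1}_{2^*,2^*})}$ from $H^{s_0}_{2^*}(\R)\hookrightarrow L^q(\R)$ and $B^{s_1}_{2^*,2^*}\hookrightarrow L^r$ together with the uniform decomposition $\sigma_\alpha$, then run the dyadic/convolution-identity/Strichartz machinery of Proposition \ref{p25} verbatim (with $s_1$ replacing $ns_0$ in the spatial regularity), and finally specialize $r=\infty$ using $(n+4)/2^{*}=n/2$ to obtain the stated $s>n/2-4/q$. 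This matches the paper's argument step for step.
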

\begin{proof}[Sketch of the Proof]
 In view of $\ell^{2^*}\subset\ell^{p}$, it suffices to consider the
case $p=2^*$. Using the inclusions $H^{s_{0}}_{p}(\R)\subset
L^{q}(\R)$ and $B^{s_{1}}_{p,p}(\R^n)\subset L^{r}(\R^n)$, we have
\begin{equation}
\norm{u}_{L^{q}(\R,L^{r}(Q_{\alpha}))}
\lesssim\norm{(I-\partial^{2}_{t})^{s_{0}/2}\sigma_{\alpha}u}_{L^{p}(\R,B^{s_{1}}_{p,p}(\R^n))}.\label{qr1}
\end{equation}
Using the same way as in Lemma \ref{l23}, we can show that
\begin{equation}
\left(\sum_{\alpha\in\mathbb{Z}^n}\norm{u}^{p}_{L^{q}(\R,L^{r}(Q_{\alpha}))}\right)^{1/p}
\lesssim\norm{(I-\partial^{2}_{t})^{s_{0}/2}u}_{L^{p}(\R,B^{s_{1}}_{p,p}(\R^n))}.\label{pqr2}
\end{equation}
One can repeat the procedures as in the proof of Lemma \ref{l23} to
conclude that
\begin{equation}
\sum_{\alpha\in\mathbb{Z}^n}\norm{(I-\partial^{2}_{t})^{s_{0}/2}\sigma_{\alpha}S_{\eps}(t)u}^{p}_{L^{p}(\R,B^{s_{1}}_{p,p}(\R^n))}
\lesssim\sum_{j=0}^{\infty}\norm{\mathscr{F}^{-1}\varphi_{j}(h_{\eps}(\xi))\mathscr{F}
u_{0}}_{H^{s_{1}+4s_{0}}(\R^n)}.\label{ppp}
\end{equation}
Applying an analogous way as in the proof of Proposition \ref{p25},
we can get
\begin{equation}
\sum_{j=0}^{\infty}\norm{\mathscr{F}^{-1}\varphi_{j}(h_{\eps}(\xi))\mathscr{F}
u_{0}}_{H^{s_{1}+4s_{0}}(\R^n)}\lesssim\norm{u_{0}}_{H^{s_{1}+4s_{0}+4\rho}}.\label{sum1}
\end{equation}
Combining \eqref{pqr2} with \eqref{sum1}, we immediately get
\eqref{pqr1}.
\end{proof}

\section{Global-local estimates on time-space}
\subsection{Time-global and space-local Strichartz estimates}

We need to make some modifications to the Strichartz estimates,
which are global in the time variable and local in spatial
variables. We always denote by $S_{\eps}(t)$ and
$\mathscr{A}_{\eps}$ the fourth order Schr\"{o}dinger semi-group and
the integral operator as before.
\begin{proposition}\label{p31}
For $n\geq5$, we have
\begin{align}
\sup_{\alpha\in
\mathbb{Z}^n}\norm{S_{\eps}(t)u_0}_{L^{2}_{t,x}(\R\times
Q_{\alpha})}\lesssim &\norm{u_0}_{2},\label{str5}\\
\sup_{\alpha\in\mathbb{Z}^n}\norm{\mathscr{A}_{\eps}F}_{L^{2}_{t,x}(\R\times
Q_{\alpha})}\lesssim &\sum_{\alpha\in
\mathbb{Z}^n}\norm{F}_{L^{1}_{t}L^{2}_{x}(\R\times
Q_{\alpha})},\label{str6}\\
\sup_{\alpha\in\mathbb{Z}^n}\norm{\mathscr{A}_{\eps}F}_{L^{2}_{t,x}(\R\times
Q_{\alpha})}\lesssim &\sum_{\alpha\in
\mathbb{Z}^n}\norm{F}_{L^{2}_{t}L^{2}_{x}(\R\times
Q_{\alpha})}.\label{str7}
\end{align}
\end{proposition}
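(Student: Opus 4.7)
The plan is to derive all three estimates from the endpoint Strichartz estimates, i.e.\ Propositions \ref{eps} (for $\eps=1$) and \ref{p24} (for $\eps=0$), by combining them with Hölder's inequality on each unit cube and Minkowski's inequality in time on the partition $\R^n = \bigcup_{\alpha \in \Z^n} Q_\alpha$. The relevant exponent pair is $p = 2n/(n-4)$, for which $4/\gamma(p) = n(1/2 - 1/p) = 2$, so that the time exponent is exactly the $L^2_t$ required by the target norms. The hypothesis $n \geq 5$ is precisely what guarantees that this endpoint pair is admissible for both values of $\eps$.

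The common first step is a purely spatial Hölder embedding: since $|Q_\alpha| = 1$ and $2n/(n-4) \geq 2$, one has $\|f\|_{L^2(Q_\alpha)} \leq \|f\|_{L^{2n/(n-4)}(Q_\alpha)} \leq \|f\|_{L^{2n/(n-4)}(\R^n)}$. Taking $L^2_t$ norms gives $\|u\|_{L^2_{t,x}(\R \times Q_\alpha)} \leq \|u\|_{L^2_t L^{2n/(n-4)}_x(\R \times \R^n)}$, uniformly in $\alpha$. Applying this with $u = S_\eps(t) u_0$ and invoking the endpoint homogeneous Strichartz estimate immediately yields \eqref{str5}. Applying it with $u = \mathscr{A}_\eps F$ and invoking the inhomogeneous Strichartz estimate in the form with $\rho = 2$ (whence $\gamma(\rho)' = 1$ and $\rho' = 2$) gives $\|\mathscr{A}_\eps F\|_{L^2_{t,x}(\R \times Q_\alpha)} \lesssim \|F\|_{L^1_t L^2_x(\R \times \R^n)}$. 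At each fixed $t$, the global $L^2_x$ norm is controlled by $\sum_\alpha \|F(t,\cdot)\|_{L^2(Q_\alpha)}$ via the sequence-space embedding $\ell^1 \hookrightarrow \ell^2$; integrating in $t$ with Fubini then produces \eqref{str6}.

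The most delicate estimate is \eqref{str7}. Choosing $\rho = 2n/(n-4)$ in the inhomogeneous endpoint, one obtains the doubly-endpoint bound $\|\mathscr{A}_\eps F\|_{L^2_t L^{2n/(n-4)}_x} \lesssim \|F\|_{L^2_t L^{2n/(n+4)}_x}$, and the spatial embedding from the previous paragraph then reduces the matter to proving
\begin{align*}
\|F\|_{L^2_t L^{2n/(n+4)}_x(\R \times \R^n)} \lesssim \sum_{\alpha \in \Z^n} \|F\|_{L^2_{t,x}(\R \times Q_\alpha)}.
\end{align*}
On each cube, Hölder gives $\|F(t,\cdot)\|_{L^{2n/(n+4)}(Q_\alpha)} \leq \|F(t,\cdot)\|_{L^2(Q_\alpha)}$ (valid since $2n/(n+4) \leq 2$ and $|Q_\alpha|=1$), and one then needs the sequence-space inclusion $\ell^1 \hookrightarrow \ell^{2n/(n+4)}$ in the cube index, followed by Minkowski in $t$. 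The main obstacle, and the only place in the argument where the dimensional hypothesis enters non-trivially beyond the availability of the endpoint Strichartz, is precisely this last inclusion: it requires $2n/(n+4) \geq 1$, that is $n \geq 4$. Together with the Strichartz constraint $n \geq 5$ this closes the proof, and also explains conceptually why the cases $n = 3, 4$ need to be handled separately (as in Theorem \ref{t11}).
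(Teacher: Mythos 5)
Your proof is correct and follows the same route as the paper: Hölder on unit cubes reduces the $L^2_x(Q_\alpha)$ norm to the $L^{2n/(n-4)}_x(\R^n)$ norm, to which the endpoint Strichartz estimates of Propositions~\ref{eps} and~\ref{p24} apply, and the right-hand sides are then patched back to cube sums via $\ell^1\hookrightarrow\ell^p$ in the cube index and Minkowski in $t$; the paper states this only in outline for \eqref{str6} and \eqref{str7}, and you have simply filled in the omitted steps. One small caveat on your closing remark: the dimensional restriction here is driven by the endpoint Strichartz requirement $n\geq 5$, and the auxiliary condition $2n/(n+4)\geq 1$ (i.e.\ $n\geq 4$) is subsumed by it, so it is not really an independent obstruction nor the conceptual reason the paper treats $n=3,4$ separately --- as noted in Remark~\ref{rm}, the case $\eps=1$ with $n=3,4$ can still be handled by Proposition~\ref{Guo1}, and the genuine gap is only for $\eps=0$ in low dimensions, where a suitable $L^2_t$-Strichartz endpoint is unavailable.
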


\begin{proof}
 In view of H\"{o}lder's
inequality and the endpoint Strichartz estimate, we have
\begin{align*}
\norm{S_{\eps}(t)u_0}_{L^{2}_{t,x}(\R\times Q_{\alpha})}\lesssim
&\norm{S_{\eps}(t)u_0}_{L^{2}_{t}L^{2n/(n-4)}_{x}(\R\times
Q_{\alpha})}\nonumber\\
\lesssim &\norm{S_{\eps}(t)u_0}_{L^{2}_{t}L^{2n/(n-4)}_{x}(\R\times
\R^n)}\nonumber\\
\lesssim &\norm{u_0}_{L^{2}_{x}(\R^n)}.
\end{align*}
Using the above ideas and the following Strichartz estimate
\begin{align*}
\norm{\mathscr{A}_{\eps}F}_{L^{2}_{t}L^{2n/(n-4)}_{x}(\R\times
\R^n)}\lesssim
&\norm{F}_{L^{1}_{t}L^{2}_{x}(\R\times \R^{n})},\\
\norm{\mathscr{A}_{\eps}F}_{L^{2}_{t}L^{2n/(n-4)}_{x}(\R\times
\R^n)}\lesssim &\norm{F}_{L^{2}_{t}L^{2n/(n+4)}_{x}(\R\times
\R^{n})},
\end{align*}
 one can
easily get \eqref{str6} and \eqref{str7}.
\end{proof}
\begin{remark}\label{rm} Using Proposition \ref{Guo1}, we can verify that Proposition \ref{p31} also holds for
$n=3,4$ when only consider the case $\eps=1$. For example, taking
$q=2, p=8$ in Proposition \ref{Guo1} when $n=3$ we can get the
result desired.
\end{remark}
Since the endpoint Strichartz estimates are used in the
above proof , Proposition \ref{p31} only holds for $n\geq5$. It is
not clear for us whether \eqref{str5}  holds or not
 for $n=3,4$ when consider $S_{0}(t)$. It is why we need an additional condition that
 $u_{0}\in \dot{H}^{-3/2}$ is small in the case $n=3,4$ . However, we have the
 following (cf. \cite{KPV2})
\begin{proposition}\label{p32}
Let $n=3,4$. Then we have for any $1\leq r<6/5 $ for $n=3$, or
$1\leq r<4/3$ for $n=4$
\begin{equation}
\sup_{\alpha\in
\mathbb{Z}^n}\norm{S_{\eps}(t)u_0}_{L^{2}_{t,x}(\R\times
Q_{\alpha})}\lesssim
\min(\norm{(-\Delta)^{-3/4}u_{0}}_{L^2(\R^n)},\norm{u_0}_{L^{2}\cap
L^{r}(\R^n)}).\label{s2}
\end{equation}
Observing  \eqref{s2} is strictly weaker than \eqref{str5} in the
low frequencies.
\end{proposition}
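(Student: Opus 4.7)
The first inequality in the minimum follows immediately from \eqref{Kenig1}, since $\|(-\Delta)^{-3/4}u_0\|_{L^2} = \|u_0\|_{\dot{H}^{-3/2}}$ by Plancherel. The substantive content is therefore the bound
$$\sup_{\alpha\in\Z^n}\|S_\eps(t)u_0\|_{L^2_{t,x}(\R\times Q_\alpha)} \lesssim \|u_0\|_{L^2\cap L^r}$$
for $r$ in the stated range. My plan is to split the time integration at $|t|=1$, handling the short-time piece by the $L^2$-isometry property of $S_\eps(t)$ and the long-time piece by its dispersive decay. This is natural because the endpoint Strichartz pair $(2,2n/(n-4))$ is missing in dimensions $n=3,4$, so neither direct Strichartz nor the earlier Kato smoothing \eqref{Kenig1} covers the low-frequency regime of $u_0$ in an $L^2$-only norm; giving up some spatial integrability (replacing $L^2$ by $L^r$ with $r<2$) trades for summable time decay, which is what the condition $r<2n/(n+2)$ encodes.

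Since $|Q_\alpha|=1$, H\"older's inequality gives, for any $p\geq 2$,
$$\|S_\eps(t)u_0\|_{L^2(Q_\alpha)} \leq \|S_\eps(t)u_0\|_{L^p(Q_\alpha)} \leq \|S_\eps(t)u_0\|_{L^p(\R^n)},$$
uncoupling the spatial norm from $\alpha$. For $|t|\leq 1$, $L^2$-unitarity of $S_\eps(t)$ yields $\|S_\eps(t)u_0\|_{L^2(Q_\alpha)}\leq\|u_0\|_2$, and the $t$-integral over $[-1,1]$ contributes a constant multiple of $\|u_0\|_2^2$. For $|t|\geq 1$, the $L^1\to L^\infty$ decay of the biharmonic group, $\|S_0(t)g\|_\infty\lesssim |t|^{-n/4}\|g\|_1$ (and the analogue for $\eps=1$ from Proposition~\ref{pp}, whose long-time decay is in fact even better), interpolated with $L^2$-conservation via Riesz--Thorin, gives
$$\|S_\eps(t)u_0\|_{L^p(\R^n)} \lesssim |t|^{-\frac{n}{4}(1-2/p)}\|u_0\|_{p'},\qquad p\geq 2.$$

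Squaring and integrating over $|t|\geq 1$ converges precisely when $n(1-2/p)/2>1$, i.e.\ $p>2n/(n-2)$, which dualizes exactly to $r=p'<2n/(n+2)$, giving the stated thresholds $6/5$ for $n=3$ and $4/3$ for $n=4$. For such $p$ the long-time contribution is bounded by $\|u_0\|_{L^r}^2$. Summing the short-time and long-time pieces and taking the supremum over $\alpha$ delivers the claimed estimate, and then taking the minimum with \eqref{Kenig1} finishes the proof. The main obstacle is simply to recognize that one cannot reach the desired $r$-range by Sobolev-embedding $\|u_0\|_{\dot H^{-3/2}}\lesssim\|u_0\|_{L^{2n/(n+3)}}$ into \eqref{Kenig1} (the exponent $2n/(n+3)$ is strictly smaller than $2n/(n+2)$, and degenerates to $1$ in three dimensions); the time-splitting above is what circumvents this endpoint failure.
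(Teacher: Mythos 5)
Your proof is correct and takes essentially the same route as the paper: the paper's inequality \eqref{s2r} is precisely your pointwise-in-time bound obtained by combining $L^2$-unitarity for $|t|\leq 1$ with the $L^r\to L^{r'}$ dispersive decay for $|t|\geq 1$ and H\"older on the unit cube, and squaring and integrating in time gives the threshold $r<2n/(n+2)$. The only minor organizational difference is that the paper disposes of the case $\eps=1$ separately via Remark~\ref{rm} and the Strichartz estimate, while you fold both values of $\eps$ into the same dispersive argument, using the faster long-time decay from Proposition~\ref{pp} when $\eps=1$.
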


\begin{proof}
By Remark \ref{rm} and Lemma \ref{l34}, it suffices to show that
\begin{equation}
\sup_{\alpha\in
\mathbb{Z}^n}\norm{S_{0}(t)u_0}_{L^{2}_{t,x}(\R\times
Q_{\alpha})}\lesssim \norm{u_0}_{L^{2}\cap L^{r}(\R^n)}.\label{s22}
\end{equation}
Using the unitary property in $L^2$ and the $L^{p}$-$L^{p'}$ decay
estimate of $S_{0}(t)$, we have
\begin{equation}
\sup_{\alpha\in \mathbb{Z}^n}\norm{S_{0}(t)u_0}_{L^{2}_{x}(
Q_{\alpha})}\lesssim
(1+|t|)^{\frac{n}{4}(1-\frac{2}{r})}\norm{u_0}_{L^{2}\cap
L^{r}(\R^n)}.\label{s2r}
\end{equation}
Taking the $L^{2}_{t}$ norm in both sides of \eqref{s2r}, we
immediately get \eqref{s22}.
\end{proof}

\begin{proposition}\label{p33}
Let $n=3,4$. Then we have
\begin{equation*}
\sup_{\alpha\in\mathbb{Z}^n}\norm{\mathscr{A}_{\eps}F}_{L^{2}_{t,x}(\R\times
Q_{\alpha})}\lesssim\sum_{\alpha\in
\mathbb{Z}^n}\norm{F}_{L^{1}_{t}L^{2}_{x}(\R\times Q_{\alpha})}.
\end{equation*}
\end{proposition}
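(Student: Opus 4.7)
The plan is to reduce this inhomogeneous estimate to the homogeneous maximal-function bound of Proposition~\ref{p32} by spatially localizing $F$ and using Minkowski's integral inequality in the time variable. Heuristically, the operator $F\mapsto\mathscr{A}_{\eps}F$ is an average of $S_{\eps}(t-\tau)F(\tau)$ in $\tau$, and a linear maximal estimate for $S_{\eps}(t)$ acting on an $L^2$ function supported in a single unit cube, combined with an $L^1_\tau$ average, should deliver exactly the right-hand side $\sum_\alpha\norm{F}_{L^1_tL^2_x(\R\times Q_\alpha)}$.

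Concretely, I would first decompose $F=\sum_{\alpha\in\Z^n}F\chi_{Q_\alpha}$ and apply the triangle inequality
\begin{equation*}
\norm{\mathscr{A}_{\eps}F}_{L^{2}_{t,x}(\R\times Q_{\beta})}\leq\sum_{\alpha\in\Z^n}\norm{\mathscr{A}_{\eps}(F\chi_{Q_\alpha})}_{L^{2}_{t,x}(\R\times Q_{\beta})}.
\end{equation*}
For each $\alpha$ I would write $\mathscr{A}_{\eps}(F\chi_{Q_\alpha})(t,x)=\int_{\R}\chi_{[0,t]}(\tau)S_{\eps}(t-\tau)(F(\tau)\chi_{Q_\alpha})(x)\,d\tau$, bound the retarded cutoff by $1$, and apply Minkowski's integral inequality to pull the $d\tau$-integration outside the $L^2_{t,x}(\R\times Q_\beta)$ norm. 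Since the inner $L^2_t$ norm is over all of $\R$, translation invariance in $t$ replaces $S_{\eps}(t-\tau)$ by $S_{\eps}(t)$, and Proposition~\ref{p32} then gives the $\beta$-uniform bound
\begin{equation*}
\norm{S_{\eps}(t)(F(\tau)\chi_{Q_\alpha})}_{L^{2}_{t,x}(\R\times Q_\beta)}\lesssim\norm{F(\tau)\chi_{Q_\alpha}}_{L^2\cap L^r(\R^n)}.
\end{equation*}
Because $Q_\alpha$ has unit measure and the admissible $r$ in Proposition~\ref{p32} satisfies $r<2$, H\"older's inequality yields $\norm{F(\tau)\chi_{Q_\alpha}}_{L^r(\R^n)}\lesssim\norm{F(\tau)}_{L^2(Q_\alpha)}$, so the full $L^2\cap L^r$ norm is controlled by $\norm{F(\tau)}_{L^2(Q_\alpha)}$. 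Integrating in $\tau$ and summing in $\alpha$ then produces the claimed bound, uniformly in $\beta$.

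The only subtlety I anticipate is the retarded nature of $\mathscr{A}_\eps$, which prevents a clean convolution identity and blocks any direct $L^1_\tau$-to-$L^\infty_t$ duality argument. However, this causes no loss here: once the indicator $\chi_{[0,t]}(\tau)$ is pointwise dominated by $1$, the subsequent $t$-integral extends to all of $\R$, time-translation invariance is restored, and Proposition~\ref{p32} applies uniformly in $\beta$. Everything else is a routine assembly of Proposition~\ref{p32} with H\"older's inequality on unit cubes.
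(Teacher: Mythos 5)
Your proof is correct, and it follows the same basic reduction as the paper but packages the steps differently. The paper first disposes of $\eps=1$ separately via Remark~\ref{rm}, and then for $\eps=0$ re-derives the pointwise-in-$t$ decay $\sup_\alpha\norm{S_0(t)u_0}_{L^2_x(Q_\alpha)}\lesssim(1+\abs{t})^{-3/4}\norm{u_0}_{L^2\cap L^r}$, inserts this into the Duhamel integral, and applies Young's convolution inequality $L^2_t\star L^1_t\to L^2_t$; Hölder on unit cubes then converts $\norm{F(\tau)}_{L^1\cap L^2(\R^n)}$ into $\sum_\alpha\norm{F(\tau)}_{L^2(Q_\alpha)}$. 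You instead spatially localize $F$ at the outset, use Minkowski's integral inequality in $\tau$ together with translation invariance of the global $L^2_t$ norm to strip off the retarded cutoff, and then invoke Proposition~\ref{p32} as a black box (which already carries the $L^2_t$ integrability of the decay built in). Both routes amount to the same thing: an $L^1_\tau$ average of an $L^2_t$-controlled free evolution, followed by Hölder on unit cubes. Your version has the modest advantage of handling $\eps\in\{0,1\}$ uniformly rather than dispatching $\eps=1$ separately, while the paper's reliance on the explicit pointwise decay plus Young is a bit more self-contained (it does not need to unwind the statement of Proposition~\ref{p32} back to its proof). Neither gains anything quantitative over the other for this particular estimate.
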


\begin{proof}

 Firstly, the case $\eps=1$ holds by Remark \ref{rm}. Secondly, we notice that
\begin{equation*}
\sup_{\alpha\in \mathbb{Z}^n}\norm{S_{0}(t)u_0}_{L^{2}_{x}(
Q_{\alpha})}\lesssim (1+|t|)^{-3/4}\norm{u_0}_{L^{2}\cap
L^{r}(\R^n)}.
\end{equation*}
It follows that
\begin{equation*}
\norm{\mathscr{A}_{0}F}_{L^{2}_{x}( Q_{\alpha})}\lesssim
\int_{\R}(1+|t|)^{-3/4}\norm{F(\tau)}_{L^{1}_{x}\cap
L^{2}_{x}(\R^n)}d\tau.
\end{equation*}
Using Young's inequality, one has that
\begin{equation}
\norm{\mathscr{A}_{0}F}_{L^{2}_{t,x}(\R\times Q_{\alpha})}\lesssim
\norm{F}_{L^{1}(\R, L^{1}_{x}\cap L^{2}_{x}(\R^n))}.\label{2211}
\end{equation}
In view of H\"{o}lder's inequality, \eqref{2211} yields the result,
as desired.
\end{proof}

\begin{remark}
For $n=2$, we can't establish the similar result as in Proposition
\ref{p33}. So our results can't cover this case.
\end{remark}

\subsection{Note on  the time-global and space-local smoothing effects}
Kenig, Ponce and Vega \cite{KPV1,KPV2} obtained the local
smoothing effect estimates for the Schr\"{o}dinger semi-group
$e^{it\Delta}$, and their results can also be developed to the
fourth order Schr\"{o}dinger semi-group
$e^{it(\Delta^2-\eps\Delta)}$. On the basis of their results and
Proposition \ref{p31}, we can obtain a time-global version of the
local smoothing effect estimates with the inhomogeneous
differential operator $(I-\Delta)^{1/2}$ instead of the
homogeneous one $\nabla$, which is useful to control the low
frequencies parts of the nonlinearity.

\begin{lemma}[cf. \cite{KPV1}]\label{l34}
Let $\Omega$ be an open set in $\R^n$, $\phi$ be a $C^{1}(\Omega)$
function such that $\nabla\phi(\xi)\neq 0$ for any $\xi\in \Omega$.
Assume that there is a $N\in\N$ such that for any
$\bar{\xi}:=(\xi_{1},...,\xi_{n-1})\in\R^{n-1}$ and $r\in \R$, the
equation $\phi(\xi_{1},...,\xi_{k},x,\xi_{k+1},...,\xi_{n-1})=r$ has
at most $N$ solutions. For $a(x,s)\in L^{\infty}(\R^{n}\times \R)$
and $f\in\mathscr{S}'(\R^{n})$, we denote
\begin{equation*}
W(t)f(x)=\int_{\Omega}e^{i(t\phi(\xi)+x\xi)}a(x,\phi(\xi))\hat{f}(\xi)d\xi,
\end{equation*}
then for $n\geq 2$, we have
\begin{equation}
\norm{W(t)f(x)}_{L^{2}_{t,x}(\R\times B(0,R))}\leq
CNR^{1/2}\norm{|\nabla\phi|^{-1/2}\hat{f}}_{L^{2}(\Omega)}.\label{222}
\end{equation}
\end{lemma}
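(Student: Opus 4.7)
\textbf{Proof proposal for Lemma~\ref{l34}.}
The plan is to peel off the factor $e^{it\phi(\xi)}$ by Plancherel in $t$, exploit the factor $e^{ix\cdot\xi}$ and Plancherel in the $n-1$ ``transverse'' spatial directions to absorb most of $\hat f$, and let the remaining ``parallel'' spatial variable (along which $\phi$ is non-stationary) range over an interval of length $\sim R$ to produce the $R^{1/2}$ gain. The hypotheses $\nabla\phi\neq0$ and ``at most $N$ solutions'' will be used to make a change of variables of the form $\eta=(\xi_{1},\ldots,\hat\xi_{k},\ldots,\xi_{n},\phi(\xi))$ simultaneously a local diffeomorphism and globally at most $N$-to-one.

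First I would decompose $\Omega$ so that on each piece one coordinate derivative of $\phi$ dominates: since $\nabla\phi\neq 0$, at each $\xi$ some index $k$ satisfies $|\partial_{\xi_{k}}\phi(\xi)|\geq|\nabla\phi(\xi)|/\sqrt{n}$, hence $\Omega=\bigcup_{k=1}^{n}\Omega_{k}$ with this estimate valid on $\Omega_{k}$. The level-set hypothesis then lets me subdivide each $\Omega_{k}$ into at most $N$ open subsets on which $\xi\mapsto(\xi_{1},\ldots,\hat\xi_{k},\ldots,\xi_{n},\phi(\xi))$ is injective. Writing $W(t)f$ as a sum of at most $nN$ pieces and using the triangle inequality in $L^{2}_{t,x}$, it suffices to prove the estimate on a single such piece $\Omega'$; without loss of generality take $k=n$. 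Summing will then contribute the factor $N$, with the $n$ absorbed into the constant $C$.

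On $\Omega'$ I would perform the change of variables $\eta=(\xi_{1},\ldots,\xi_{n-1},\phi(\xi))$, whose Jacobian is $|\partial_{\xi_{n}}\phi|$, so that $\xi_{n}$ becomes a smooth function $\xi_{n}(\bar\eta',\eta_{n})$. Writing $x=(\bar x',x_{n})$ and $\bar\xi'=\bar\eta'$, the operator becomes
\begin{equation*}
W(t)f(x)=\int e^{it\eta_{n}}\,e^{i\bar x'\cdot\bar\eta'}\,e^{ix_{n}\xi_{n}(\bar\eta',\eta_{n})}\,a(x,\eta_{n})\,\frac{\hat f(\bar\eta',\xi_{n}(\bar\eta',\eta_{n}))}{|\partial_{\xi_{n}}\phi(\bar\eta',\xi_{n}(\bar\eta',\eta_{n}))|}\,d\eta.
\end{equation*}
For each fixed $x_{n}$, Plancherel in $t$ (against $\eta_{n}$) followed by Plancherel in $\bar x'$ (extending the domain of integration from the slice $B(0,R)\cap\{x_{n}=\text{const}\}$ up to all of $\R^{n-1}$, which only enlarges the norm) combined with $\|a\|_{\infty}<\infty$ yields
\begin{equation*}
\int_{\R}\!\!\int_{\R^{n-1}}|W(t)f(\bar x',x_{n})|^{2}\,d\bar x'\,dt\;\lesssim\;\|a\|_{\infty}^{2}\int\frac{|\hat f(\bar\eta',\xi_{n}(\eta))|^{2}}{|\partial_{\xi_{n}}\phi|^{2}}\,d\eta.
\end{equation*}

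Finally, integrating over $x_{n}\in(-R,R)$ produces the factor $2R$, and changing variables back from $\eta$ to $\xi$ contributes one power of $|\partial_{\xi_{n}}\phi|$, leaving the weight $|\partial_{\xi_{n}}\phi|^{-1}\leq\sqrt{n}\,|\nabla\phi|^{-1}$ by the definition of $\Omega'$. Taking square roots and summing over the at most $nN$ pieces delivers the claimed bound $CNR^{1/2}\||\nabla\phi|^{-1/2}\hat f\|_{L^{2}(\Omega)}$. The main obstacle I foresee is purely technical: setting up the partition of $\Omega$ so that the change of variables is simultaneously a diffeomorphism onto its image, at most $N$-to-one globally, and has Jacobian comparable to $|\nabla\phi|$. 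Once this bookkeeping is done, the remainder is a clean two-step application of Plancherel plus a trivial length estimate in the parallel direction.
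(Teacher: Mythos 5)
The paper does not actually prove Lemma~\ref{l34}; it simply cites Kenig, Ponce and Vega \cite{KPV1} and moves directly to Corollary~\ref{c35}. So there is no ``paper proof'' to compare against, and your proposal has to be judged against the known argument from \cite{KPV1}. Measured that way, your scheme is the right one and matches the standard proof: partition $\Omega$ according to which $\partial_{\xi_k}\phi$ dominates, change variables $\eta=(\bar\xi',\phi(\xi))$ with Jacobian $|\partial_{\xi_k}\phi|$, apply Plancherel first in $t$ (dual to $\eta_n=\phi$) and then in the transverse spatial variables $\bar x'$ after enlarging the slice to $\R^{n-1}$, and finally pay a factor of $2R$ by integrating trivially in the remaining ``parallel'' coordinate $x_k$. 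The bookkeeping of Jacobians, the inequality $|\partial_{\xi_k}\phi|^{-1}\le\sqrt n\,|\nabla\phi|^{-1}$ on $\Omega_k$, and the final square-root accounting are all correct.

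The one step that is not yet a proof, and which you yourself flag, is the claim that each $\Omega_k$ can be split into \emph{at most $N$} open subsets on which the map $\xi\mapsto(\bar\xi',\phi(\xi))$ is injective. The level-set hypothesis controls how often a fixed value $r$ is attained along a $\xi_k$-line; it does \emph{not} bound the number of connected components of the slice $\{\bar\xi'\}\times\R\,\cap\,\Omega_k$, which can easily exceed $N$ (or even be infinite). Two clean ways to close this gap. (i) Keep your decomposition idea, but argue via interval colouring: on each slice, $\phi$ is strictly monotone on every component (since $\partial_{\xi_k}\phi\ne0$ there), so the $\phi$-images are intervals; the hypothesis says every level $r$ is covered by at most $N$ of those image intervals, i.e.\ the clique number of the associated interval graph is $\le N$; since interval graphs are perfect, the components can be partitioned into $\le N$ groups with pairwise disjoint $\phi$-ranges, and on each group the change of variables is a bijection. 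Patching this into a \emph{measurable} colouring in $\bar\xi'$ is the remaining technicality. (ii) Simpler: do not decompose at all. Write the $\xi_k$-integral, for fixed $\bar\xi'$ and level $\eta_n=r$, as a sum over the $\le N$ branches $\xi_k^{(j)}(\bar\xi',r)$; after Plancherel in $t$ and $\bar x'$ you face $\int\bigl|\sum_j \hat f/|\partial_{\xi_k}\phi|\bigr|^2\,d\eta$, and Cauchy--Schwarz in the finite sum gives the factor $N$ (in fact $N^{1/2}$ after the square root, which is even better than claimed); changing each branch back to the $\xi_k$ variable reassembles $\int|\hat f|^2|\partial_{\xi_k}\phi|^{-1}d\xi$. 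Route (ii) avoids the measurable-selection issues entirely and is closer to how one usually writes the argument. Either way, the proposal is in essence correct and follows the same path as the cited reference.
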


\begin{corollary}\label{c35}
Let $n\geq 5$, $S_{\eps}(t)=e^{it(\Delta^2-\eps\Delta)}$. We have
\begin{align}
\sup_{\alpha\in\mathbb{Z}^n}\norm{S_{\eps}(t)u_0}_{L^{2}_{t,x}(\R\times
Q_{\alpha})}\lesssim  &\norm{u_0}_{H^{-3/2}},\label{223} \\
\norm{\mathscr{A}_{\eps}f}_{L^{\infty}(\R,H^{3/2})} \lesssim
&\sum_{\alpha\in\mathbb{Z}^n}\norm{f}_{L^{2}_{t,x}(\R\times
Q_{\alpha})}.\label{I4}
\end{align}
For $n=2,3,4$, \eqref{I4} also holds if one substitutes $H^{3/2}$ by
$\dot{H}^{3/2}$.
\end{corollary}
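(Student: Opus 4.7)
The plan is to prove \eqref{223} by combining the oscillatory-integral local smoothing of Lemma \ref{l34} with the endpoint Strichartz estimate of Proposition \ref{p31}, and then to derive \eqref{I4} from \eqref{223} by a standard $TT^*$/duality argument.

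For \eqref{223}, I would split $u_0 = \triangle_0 u_0 + (I - \triangle_0) u_0$ into its low- and high-frequency components. To the high-frequency piece I apply Lemma \ref{l34} with phase $\phi(\xi) = |\xi|^4 + \eps|\xi|^2$ and $\Omega = \{|\xi| > 1/2\}$. The hypotheses check easily: $\nabla \phi(\xi) = (4|\xi|^2 + 2\eps)\xi$, so $|\nabla \phi(\xi)| \gtrsim |\xi|^3$ on $\Omega$ uniformly in $\eps \in \{0, 1\}$; and the slice equation $\phi(\xi_1,\dots,x,\dots,\xi_{n-1}) = r$ is a quartic in the free variable $x$, so $N = 4$ solutions suffice. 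Translating in $x$ to move $Q_\alpha$ onto $Q_0 \subset B(0, \sqrt{n})$ and using the translation invariance of both norms involved, Lemma \ref{l34} yields
$$\sup_\alpha \|S_\eps(t)(I - \triangle_0) u_0\|_{L^2_{t,x}(\R \times Q_\alpha)} \lesssim \||\xi|^{-3/2} \mathscr{F}[(I - \triangle_0) u_0]\|_{L^2} \lesssim \|u_0\|_{\dot H^{-3/2}}.$$
For the low-frequency piece, Proposition \ref{p31} supplies the cheaper bound $\sup_\alpha \|S_\eps(t) \triangle_0 u_0\|_{L^2_{t,x}} \lesssim \|\triangle_0 u_0\|_{L^2}$, and since $\triangle_0 u_0$ has compactly supported Fourier transform the $L^2$ and $H^{-3/2}$ norms are equivalent on this piece. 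Summing the two pieces gives \eqref{223}.

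For \eqref{I4}, I would fix $t_0 \in \R$ and pair $\mathscr{A}_\eps f(t_0)$ with an arbitrary test function $v \in H^{-3/2}$: using the unitarity $S_\eps(t)^* = S_\eps(-t)$,
$$\langle \mathscr{A}_\eps f(t_0), v \rangle = \int_0^{t_0} \langle f(\tau), S_\eps(\tau - t_0) v \rangle_x \, d\tau.$$
A cube-by-cube Cauchy--Schwarz yields
$$|\langle \mathscr{A}_\eps f(t_0), v \rangle| \leq \Big(\sum_\alpha \|f\|_{L^2_{t,x}(\R \times Q_\alpha)}\Big) \sup_\alpha \|S_\eps(\cdot - t_0) v\|_{L^2_{t,x}(\R \times Q_\alpha)},$$
and the last factor is dominated by $\|v\|_{H^{-3/2}}$ via \eqref{223} and the time-translation invariance of $S_\eps$. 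Taking the supremum over $\|v\|_{H^{-3/2}} \leq 1$ and then over $t_0$ produces \eqref{I4}. For $n = 2, 3, 4$ the same duality argument applies verbatim, but is fed only the homogeneous input $\sup_\alpha \|S_\eps u_0\|_{L^2_{t,x}} \lesssim \|u_0\|_{\dot H^{-3/2}}$---supplied by Proposition \ref{p32} for $n = 3, 4$ and by Lemma \ref{l34} alone for $n = 2$---yielding the corresponding $\dot H^{3/2}$ conclusion.

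I expect the main technical obstacle to be verifying that $\phi(\xi) = |\xi|^4 + \eps|\xi|^2$ satisfies the hypotheses of Lemma \ref{l34} uniformly in $\eps \in \{0,1\}$---in particular the lower bound $|\nabla \phi| \gtrsim |\xi|^3$ on the high-frequency region, which is the source of the $-3/2$ regularity gain---and handling cleanly the dovetailing of the Littlewood-Paley split with the dimension restriction $n \geq 5$ in Proposition \ref{p31}, which is precisely what forces the difference between the inhomogeneous version (for $n \geq 5$) and the homogeneous version (for $n = 2, 3, 4$) of the conclusion.
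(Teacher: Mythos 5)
Your proposal is correct and follows essentially the same route as the paper: split frequencies, apply Lemma \ref{l34} with $\phi(\xi)=|\xi|^4+\eps|\xi|^2$ (noting $|\nabla\phi|\gtrsim|\xi|^3$ and the quartic slice equation) to the high-frequency part, use the endpoint-Strichartz bound of Proposition \ref{p31} on the low frequencies, and obtain \eqref{I4} by duality. The only cosmetic difference is that you use the Littlewood--Paley projection $\triangle_0$ in place of the paper's cutoff $\psi$, and you spell out the $TT^*$/duality computation that the paper merely asserts with ``\eqref{I4} is the dual version of \eqref{223}.''
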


\begin{proof}
Let $\Omega=\R^{n}\setminus B(0,1)$,
$\phi(\xi)=\abs{\xi}^{4}+\eps\abs{\xi}^{2}$ and $\psi$ be as in
\eqref{cutoff}, $a(x,s)=1-\psi(s)$ in Lemma \ref{l34}. Taking
$W(t):=S_{\eps}(t)\mathscr{F}^{-1}(1-\psi)\mathscr{F}$, from
\eqref{222} we have
\begin{equation}
\sup_{\alpha\in\mathbb{Z}^n}\norm{S_{\eps}(t)\mathscr{F}^{-1}(1-\psi)\mathscr{F}
u_0}_{L^{2}_{t,x}(\R\times Q_{\alpha})}\lesssim
\norm{\abs{\xi}^{-3/2}u_0}_{L^{2}_{\xi}(\R^{n}\setminus
B(0,1))}.\label{224}
\end{equation}
Using Proposition \ref{p31}, we have
\begin{align}
\norm{S_{\eps}(t)\mathscr{F}^{-1}\psi\mathscr{F}
u_0}_{L^{2}_{t,x}(\R\times Q_{\alpha})}\lesssim
&\norm{\mathscr{F}^{-1}\psi\mathscr{F}
u_0}_{L^{2}_{x}(\R^n)}
\lesssim \norm{\hat{u}_0}_{L^{2}_{\xi}(B(0,2))}.\label{225}
\end{align}
Combining \eqref{224} with \eqref{225} we have \eqref{223},  as
desired. \eqref{I4} is the dual version of \eqref{223}.
\end{proof}

Using the method of Kenig, Ponce and Vega \cite{KPV2} and a
modification of Hao, Hsiao and Wang \cite{Hao1}, we can prove the
following local smoothing effect estimates for the inhomogeneous
part of the solutions of the fourth order Schr\"{o}dinger equation.

 Now we turn to consider
the inhomogeneous Cauchy problem:
\begin{align}
    i\partial_t u&=\Delta^2 u-\eps\Delta u+F(t,x),\quad t\in\Real,\, x\in \Real^n,\label{eq2.1}\\
    u(0,x)&=0, \label{eq2.2}
\end{align}
with $F\in\mathscr{S}(\Real\times\Real^n)$ and $\eps=0, 1$. We have
the following estimate on the local smoothing effects in this
inhomogeneous case.

\begin{proposition}[Local smoothing effect: inhomogeneous
case]\label{p36} For any multi-index $\nu, \abs{\nu}=3$, the
solution $u(t,x)$ of the Cauchy problem \eqref{eq2.1}-\eqref{eq2.2}
satisfies
\begin{align}\label{lse3}
     \sup_{\alpha\in \Z^n}\norm{D^{\nu}
    u(t,x)}_{L^{2}_{t,x}(\R\times Q_{\alpha})
    }\lesssim\sum_{\alpha\in \Z^n}\norm{F}_{L^{2}_{t,x}(\R\times Q_{\alpha})}.
\end{align}
\end{proposition}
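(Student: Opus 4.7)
The plan is to reduce this inhomogeneous bound to the homogeneous local smoothing estimate of Corollary~\ref{c35} via a $TT^{*}$-duality argument. The $3/2$-derivative gain encoded in the homogeneous bound
$$\sup_\alpha\|S_\eps(t)u_0\|_{L^2_{t,x}(\R\times Q_\alpha)}\lesssim\|u_0\|_{\dot{H}^{-3/2}}$$
doubles under $TT^{*}$ to a $3$-derivative gain on the Duhamel-type operator, which is exactly what the proposition asks for. This is the fourth-order analogue of the Kenig--Ponce--Vega oscillatory-integral argument in~\cite{KPV2}, the modification needed to accommodate the phase $\phi(\xi)=|\xi|^{4}+\eps|\xi|^{2}$ instead of $|\xi|^{2}$.

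First I would split $F=F_{\leq 1}+F_{\geq 1}$ into a low-frequency piece $|\xi|\lesssim 1$ and a high-frequency piece $|\xi|\gtrsim 1$. On the low-frequency piece, $D^{\nu}$ is a bounded Fourier multiplier and H\"older on each unit cube reduces the desired bound to the Strichartz-type estimates of Proposition~\ref{p31} (for $n\geq 5$) and Proposition~\ref{p33} (for $n=3,4$). All the substantive work is therefore on the high-frequency piece, where Lemma~\ref{l34} can be applied with $\phi(\xi)=|\xi|^{4}+\eps|\xi|^{2}$: on $|\xi|\gtrsim 1$ one has $|\nabla\phi(\xi)|\gtrsim|\xi|^{3}$, so the factor $|\nabla\phi|^{-1/2}\lesssim|\xi|^{-3/2}$ appearing in~\eqref{222} furnishes exactly the $3/2$-derivative smoothing that will get doubled.

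Running the standard $TT^{*}$ computation, the homogeneous estimate (restricted to high frequencies) says that $S_\eps(t)D^{-3/2}\colon L^{2}_{x}\to\ell^{\infty}_{\alpha}L^{2}_{t,x}(Q_\alpha)$ is bounded, so composing with its adjoint yields
$$\sup_\alpha\Big\|D^{\nu}\int_{\R} S_\eps(t-\tau)F(\tau,\cdot)\,d\tau\Big\|_{L^{2}_{t,x}(\R\times Q_\alpha)}\lesssim\sum_\alpha\|F\|_{L^{2}_{t,x}(\R\times Q_\alpha)},\qquad|\nu|=3,$$
where the time integral now runs over all of $\R$ rather than $(0,t)$. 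This is the essentially algebraic part of the argument; the analytic content has been packaged into Lemma~\ref{l34}.

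The main obstacle, and the point where the ``modification of Hao, Hsiao and Wang''~\cite{Hao1} enters, is to pass from this non-retarded integral to the truncated Duhamel integral $\mathscr{A}_\eps F=\int_{0}^{t}S_\eps(t-\tau)F(\tau)\,d\tau$. Since both sides of the estimate sit in $L^{2}$ in the time variable, the Christ--Kiselev lemma is not available. I would handle this directly at the level of the oscillatory integral representation behind Lemma~\ref{l34}: write the indicator $\chi_{0<\tau<t}$ in terms of the temporal Hilbert transform and absorb the resulting multiplier into the Plancherel/stationary-phase analysis supporting~\eqref{222}. The $L^{2}_{t}$-boundedness of the Hilbert transform preserves all the constants, and this delivers the retarded bound~\eqref{lse3}.
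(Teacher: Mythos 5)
Your plan does not close, for two separate reasons.

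\emph{(i) The low-frequency block fails in dimensions $n=3,4$.} Proposition~\ref{p33} controls $\sup_\alpha\norm{\mathscr{A}_\eps F}_{L^2_{t,x}(\R\times Q_\alpha)}$ only by $\sum_\alpha\norm{F}_{L^1_tL^2_x(\R\times Q_\alpha)}$, i.e.\ with $L^1$ in the (unbounded) time variable. H\"older on a unit cube helps only in $x$; since $t$ ranges over all of $\R$ there is no way to dominate this by $\sum_\alpha\norm{F}_{L^2_{t,x}(\R\times Q_\alpha)}$. The $L^2_{t,x}$-input bound you actually need is \eqref{str7}, which the paper has only for $n\geq5$. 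So the low-frequency piece is simply uncovered for $n=3,4$, dimensions that Proposition~\ref{p36} must handle (it is used for $n\geq3$ via Lemma~\ref{l44}).

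\emph{(ii) The retardation step is not actually resolved by the Hilbert-transform observation.} Writing $\chi_{\tau<t}=\tfrac{1}{2}\bigl(1+\sgn(t-\tau)\bigr)$ and taking $\mathscr{F}_{t,x}$, the $\sgn$-part of the Duhamel operator becomes multiplication by $c\,\mathrm{p.v.}\,(\tau-\abs{\xi}^4-\eps\abs{\xi}^2)^{-1}$. That is precisely the resolvent multiplier; the trick leads you back to the problem, not past it. You still must show that $\xi^\nu(\tau-\abs{\xi}^4-\eps\abs{\xi}^2)^{-1}$, after the $\xi=\tau^{1/4}\eta$ normalization, is bounded on $L^2$ of a unit cube \emph{uniformly in $\tau$}. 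That spatially local, $\tau$-singular multiplier bound is Lemma~\ref{l37} and is \emph{not} contained in Lemma~\ref{l34}, whose amplitude $a(x,\phi(\xi))$ is required to be bounded — a resolvent with a pole along $\phi(\xi)=\tau$ is excluded. The claim that ``$L^2_t$-boundedness of the Hilbert transform preserves all the constants'' is too fast: the norm in play is $\ell^\infty_\alpha L^2_{t,x}(\R\times Q_\alpha)$, and after conjugation by $S_\eps(\mp t)$ the temporal Hilbert transform does not commute with the cube decomposition in $x$, so the $\ell^1_\alpha\to\ell^\infty_\alpha$ mapping must be re-proved, not inherited. For comparison, the paper's proof uses neither $TT^{*}$ nor a high/low split nor any truncation device: it passes directly from the equation to $\hat u_\beta=\hat F_\beta/(\tau-\eps\abs{\xi}^2-\abs{\xi}^4)$ by Fourier transform in $(t,x)$, applies Plancherel in $t$, rescales by $\tau^{1/4}$, and invokes Lemma~\ref{l37} — a single frequency-uniform, dimension-free estimate that makes both of the difficulties above moot.
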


\begin{proof}Separating
\begin{align*}
F&=\sum_{\alpha\in \Z^n}F \chi_{Q_\alpha}=\sum_{\alpha\in
\Z^n}F_\alpha,
 \intertext{and}
  u&=\sum_{\alpha\in \Z^n}u_\alpha,
\end{align*}
where $u_\alpha(t,x)$ is the corresponding solution of the Cauchy
problem
\begin{align}
    i\partial_t u_\alpha&=\Delta^2 u_\alpha-\eps\Delta u_\alpha+F_\alpha(t,x),\quad t\in\Real,\, x\in \Real^n,\label{eq21}\\
    u_\alpha(0,x)&=0\label{eq22},
\end{align}
we formally take Fourier transform in both variables $t$ and $x$ in
the equation \eqref{eq21} and obtain
\begin{equation*}
    \hat{u}_\alpha(\tau,\xi)=\frac{\hat{F}_\alpha(\tau,\xi)}{\tau-\eps\abs{\xi}^2-\abs{\xi}^4},\quad
    \textrm{ for each } \alpha\in\Z^n.
\end{equation*}

By the Plancherel theorem in the time variable, we can get
\begin{align}
    &\sup_{\alpha\in \Z^n}\norm{D^{\nu}
    u_\beta(t,x)}_{L^{2}_{\tau,x}(\R\times Q_{\alpha})}
    =\sup_{\alpha\in \Z^n}\norm{D^{\nu}
    \F_{t} u_{\beta}(\tau,x)}_{L^{2}_{\tau,x}(\R\times Q_{\alpha})}\nonumber\\
    =&\sup_{\alpha\in \Z^n}\norm{D^{\nu}
    \ft_{\xi}^{-1}\left(\frac{\hat{F}_\beta(\tau,\xi)}{\tau-\eps\abs{\xi}^2-\abs{\xi}^4}\right)}_{L^{2}_{\tau,x}(\R\times Q_{\alpha})}\nonumber\\
    =& \sup_{\alpha\in \Z^n}\norm{
    \F_{\xi}^{-1}\left(\frac{\xi^{\nu}}{\tau-\eps\abs{\xi}^2-\abs{\xi}^4}\hat{F}_\beta(\tau,\xi)\right)}_{L^{2}_{\tau,x}(\R\times Q_{\alpha})}\nonumber\\
   =& \sup_{\alpha\in \Z^n}\left(\int_{Q_\alpha}\int_{-\infty}^\infty \abs{\F_{\xi}^{-1}
    \left(\frac{\xi^\nu}{\tau-\eps\abs{\xi}^2-\abs{\xi}^4}\hat{F}_\beta(\tau,\xi)\right)}^{2}d\tau dx\right)^{1/2}. \label{eq2.3}
\end{align}
In order to continue the above estimate, we introduce the following
estimate.
\begin{lemma}\label{l37}
Let $\M f=\FF m(\xi)\F f$ and
$m(\xi)=\frac{\xi^\nu}{1-\eps\tau^{-\frac{1}{2}}\abs{\xi}^2-\abs{\xi}^4}$
for $\tau>0$ and $\eps\in\{0,1\},\abs{\nu}=3$, where $\F(\FF)$
denotes the Fourier (inverse, respectively) transform in x only.
Then, we have
\begin{equation*}
\sup_{\alpha\in \Z^n} \left(\int_{Q_\alpha}\abs{\M
(g\chi_{Q_\beta})}^{2}dx\right)^{1/2}\leq CR
\left(\int_{Q_\beta}\abs{g}^{2}dx\right)^{1/2}.
\end{equation*}
where $R$ is the  size of $Q_{\alpha}$.
\end{lemma}

\begin{proof}
Since the proof is similar to Hao, Hsiao and Wang \cite{Hao1}, we
omit it.
\end{proof}

Now, we go back to the proof of Proposition \ref{p36}. We first
consider the part when $\tau>0$ in \eqref{eq2.3}, \textit{i.e.},
\begin{align*}
    &\sup_{\alpha\in \Z^n}\left(\int_{Q_\alpha}\int_0^\infty \abs{\F_{\xi}^{-1}
    \left(\frac{\xi^\nu}{\tau-\eps\abs{\xi}^2-\abs{\xi}^4}\hat{F}_\beta(\tau,\xi)\right)}^{2}d\tau d
    x\right)^{1/2}\\
    =&\left(\int_0^\infty \tau^{\frac{n}{4}-\frac{1}{2}} \sup_{\alpha\in
    \Z^n}\int_{\tau^{\frac{1}{4}}Q_\alpha}\abs{\int_{\Real^n}
    e^{iy\eta}\frac{\eta^\nu}{1-\eps\tau^{-\frac{1}{2}}\abs{\eta}^2-\abs{\eta}^4}\hat{F}_\beta
    (\tau,\tau^{\frac{1}{4}}\eta)d\eta}^{2}dy d\tau\right)^{1/2}\\
    =&\left(\int_0^\infty \tau^{-\frac{n}{4}-\frac{1}{2}} \sup_{\alpha\in
    \Z^n}\int_{\tau^{\frac{1}{4}}Q_\alpha}\abs{\M \F_t
    F_\beta(\tau,\tau^{-\frac{1}{4}}y)}^{2}dy d\tau\right)^{1/2}\\
    \lesssim &\left(\int_0^\infty \tau^{-\frac{n}{4}}
    \int_{\tau^{\frac{1}{4}}Q_\beta} \abs{\F_t F_\beta(\tau,\tau^{-\frac{1}{4}}y)}^{2}d
    y d\tau\right)^{1/2}\\
    \lesssim &\left(\int_0^\infty
    \int_{Q_\beta} \abs{\F_t F(\tau,x)}^{2}d
    x d\tau\right)^{1/2}
    \end{align*}
where we have used the changes of variables, Lemma \ref{l37} and the
identity
\begin{align*}
    (\F_\eta^{-1}\hat{F}_\beta(\tau,\tau^{\frac{1}{4}}\eta))(\tau,y)=\tau^{-\frac{n}{4}}\F_t
    F_\beta(\tau,\tau^{-\frac{1}{4}}y).
\end{align*}

For the part when $\tau\in(-\infty, 0)$ in \eqref{eq2.3},  it is
easier to handle since this  corresponds to the symbol
$\frac{\eta^\nu}{1+\eps\abs{\tau}^{-\frac{1}{2}}\abs{\eta}^2+\abs{\eta}^4}$,
which has no singularity.

Therefore,  we obtain, by the Plancherel theorem, the Sobolev
embedding theorem and the H\"older's inequality
\begin{align*}
   & \sup_{\alpha\in \Z^n}\norm{D^{\nu}
    u_\beta(t,x)}_{L^{2}_{t,x}(\R\times Q_{\alpha})}\\
    \lesssim & \left(\int_{-\infty}^\infty \int_{Q_\beta} \abs{\F_t F(\tau,x)}^{2}d
    x d\tau\right)^{1/2}
   \lesssim  \left(\int_{Q_\beta}\int_{-\infty}^\infty \abs{\F_t F(\tau,x)}^{2} d\tau d
    x\right)^{1/2}\\
    \lesssim & \left(\int_{Q_\beta}\norm{ F(\cdot,x)}_{L^{2}}^{2}d
    x\right)^{1/2}
  \lesssim \norm{F}_{L^{2}_{t,x}(\R\times Q_{\beta})},
\end{align*}
which implies the desired result \eqref{lse3}. In general, the
solution $u(t,x)$ of \eqref{eq2.1} may not vanish at $t=0$,  we can
dealt with this case by the method of Hao, Hsiao and Wang
\cite{Hao}.
\end{proof}

\section{Proof of Theorem 1.1}
\begin{lemma}\label{l41}
(Sobolev Inequality). Let $\Omega\subset\R^{n}$ be a bounded domain
with $\partial\Omega\in C^{m}$, $m, \ell\in\mathbb{N}\cup\{0\}$,
$1\leq r,p,q\leq\infty$. Assume that
$$\frac{\ell}{m}\leq\theta\leq1,\quad \frac{1}{p}-\frac{\ell}{n}=\theta(\frac{1}{r}-\frac{m}{n})+\frac{1-\theta}{q}.$$
Then we have
\begin{equation*}
\sum_{|\beta|=\ell}\norm{D^{\beta}u}_{L^{p}(\Omega)}\lesssim
\norm{u}^{1-\theta}_{L^{q}(\Omega)}\norm{u}^{\theta}_{W^{m}_{r}(\Omega)},
\end{equation*}
where $\norm{u}_{W^{m}_{r}(\Omega)}=\sum\limits_{|\beta|\leq
m}\norm{D^{\beta}u}_{L^{r}(\Omega)}$.

\end{lemma}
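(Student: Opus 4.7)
The plan is to reduce the statement to the corresponding Gagliardo--Nirenberg interpolation inequality on the whole space $\R^n$, and then invoke an extension operator provided by the regularity hypothesis $\partial\Omega \in C^m$. Explicitly, since $\partial\Omega \in C^m$, there exists a bounded linear extension operator $E: W^m_r(\Omega) \to W^m_r(\R^n)$ that simultaneously satisfies $\|Eu\|_{L^q(\R^n)} \lesssim \|u\|_{L^q(\Omega)}$ for every $1 \le q \le \infty$ (the standard Calder\'on or Stein construction gives this). Applying the $\R^n$ version to $v = Eu$ and restricting $D^\beta v$ back to $\Omega$ yields the desired bound, since $\|u\|_{L^q(\Omega)} \le \|Eu\|_{L^q(\R^n)}$ and $\|u\|_{W^m_r(\Omega)} \sim \|Eu\|_{W^m_r(\R^n)}$ (up to the extension constant).

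For the $\R^n$ statement, the plan is to prove
\begin{equation*}
\sum_{|\beta|=\ell}\|D^\beta v\|_{L^p(\R^n)} \lesssim \|v\|^{1-\theta}_{L^q(\R^n)}\,\|v\|^{\theta}_{W^m_r(\R^n)}
\end{equation*}
under the scaling condition $\frac{1}{p}-\frac{\ell}{n} = \theta(\frac{1}{r}-\frac{m}{n}) + \frac{1-\theta}{q}$ and $\ell/m \le \theta \le 1$. My approach here would be via Littlewood--Paley decomposition: writing $v = \sum_{j\ge 0}\triangle_j v$, I would estimate the low-frequency piece by the $L^q$ norm (using Bernstein's inequality to pay the derivatives $D^\beta$ and the change of Lebesgue exponent $q \to p$), and the high-frequency piece by the $W^m_r$ norm (again via Bernstein to convert between $L^r$ and $L^p$ and to absorb the $m-\ell$ missing derivatives). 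Splitting the sum at the dyadic scale where the two bounds balance gives the geometric-mean structure $\|v\|^{1-\theta}_{L^q}\|v\|^{\theta}_{W^m_r}$; the scaling hypothesis is precisely what makes the two exponents add up correctly, and the constraint $\ell/m \le \theta \le 1$ ensures that the convergent side of each geometric series lies in the correct regime.

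The main technical obstacle is the endpoint behaviour. When $r = 1$ or $p = \infty$ the Bernstein argument needs small modifications (using that $\|\triangle_j v\|_\infty \lesssim 2^{nj/r}\|\triangle_j v\|_r$ still holds), and when $\theta = \ell/m$ the two dyadic pieces balance at every scale, so one needs an extra logarithmic room that is supplied by the fact that we sum $\|D^\beta v\|_{L^p}$ rather than bound a borderline Besov norm. A cleaner alternative, which avoids these endpoints entirely, is to use complex interpolation: the operator $T: v \mapsto D^\beta v$ acts boundedly from $W^m_r$ to $W^{m-\ell}_r$ and from $L^q$ to $W^{-\ell}_q$, and the interpolation space between these two Sobolev/Besov spaces coincides with a space embedding into $L^p$ under exactly the stated scaling relation. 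Either route produces the $\R^n$ inequality, after which the extension argument closes the proof on $\Omega$.
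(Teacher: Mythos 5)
This lemma is the classical Gagliardo--Nirenberg interpolation inequality, which the paper states without proof (the standard reference is Nirenberg's 1959 paper), so there is no proof of record here to compare against; what follows is an assessment of your argument on its own merits. The reduction to $\R^n$ via a bounded extension operator is sound, provided you use Stein's (or Calder\'on's) universal extension, which on a Lipschitz --- hence $C^m$ --- domain is simultaneously bounded on $W^k_s$ for all $k\ge 0$ and $1\le s\le\infty$; in addition one should truncate by a smooth cutoff so the extension has compact support, a fact you implicitly lean on later.

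The Littlewood--Paley step, however, has a gap that is more serious than the endpoint issues you flag. To pass from $L^q$ to $L^p$ on low frequencies and from $L^r$ to $L^p$ on high frequencies you invoke Bernstein's inequality, which requires the source exponent to be \emph{no larger} than the target: $\norm{\triangle_j f}_p\lesssim 2^{jn(1/s-1/p)}\norm{\triangle_j f}_s$ is valid only for $s\le p$ and is false in the other direction on $\R^n$. But the scaling hypothesis of the lemma does not force $q\le p$ or $r\le p$. Concretely, $n=10$, $m=1$, $\ell=0$, $r=1$, $q=100$, $\theta=\tfrac12$ gives $\tfrac1p=\tfrac{1}{2q}+\tfrac12(1-\tfrac1{10})=0.455$, i.e.\ $p\approx 2.2$, so $q>p$ with $\theta<1$ (the $L^q$ factor genuinely present), and your dyadic estimate for the low-frequency block is not available. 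Nirenberg's original argument circumvents this entirely by reducing to one-dimensional integration-by-parts identities and inducting on $m$, which covers the full range of exponents; a Fourier-side argument needs extra steps (e.g.\ H\"older on the compact support to reduce to $q\le p$, $r\le p$) that you do not supply. Your ``cleaner alternative'' via complex interpolation also does not close: complex interpolation of a bounded operator gives a bound on the interpolation-space norm, not the multiplicative estimate $\norm{D^\beta v}_p\lesssim\norm{v}_q^{1-\theta}\norm{v}_{W^m_r}^\theta$. The multiplicative form comes from \emph{real} interpolation --- the $K$-functional bound $K(t,v;L^q,W^m_r)\le\min(\norm{v}_q,t\norm{v}_{W^m_r})$ --- followed by an identification of the real interpolation space and a Sobolev embedding, neither of which is automatic when $q\ne r$.
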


\begin{proof}[Proof of Theorem 1.1]
For simplicity, we first consider a simple case
$s=[\frac{n}{2}]+\frac{11}{2}$ and there is no difficult to
generalize the proof to the case $s>\frac{n}{2}+\frac{9}{2}$ with
$s+1/2\in \mathbb{N}$. We assume without loss of generality that
\begin{align*}
&F((\partial_x^\alpha u)_{\abs{\alpha}\leq 3},
(\partial_x^\alpha\bar{u})_{\abs{\alpha}\leq 3}
)\\
=&F((\partial_x^\alpha u)_{\abs{\alpha}\leq3})\\
=&\sum_{\Lambda_{R_{0},R_{1},R_{2},R_{3}}}C_{R_{0}R_{1}R_{2}R_{3}}u^{R_{0}}(D^{\alpha_{1}}
u)^{R_{1}}(D^{\alpha_{2}}u)^{R_{2}}(D^{\alpha_{3}}u)^{R_{3}},
\end{align*}
where $R_{i}, \alpha_{i}$ ($i=1,2,3$) are multi-indices,
$\abs{\alpha_{i}}=i$ $(i=1,2,3)$ and
$$\Lambda_{R_{0},R_{1},R_{2},R_{3}}=\{(R_{0},R_{1},R_{2},R_{3}):m+1\leq R_{0}+|R_{1}|+|R_{2}|+|R_{3}|\leq M+1\}.$$
Since we only use the Sobolev norm, $u$ and $\bar{u}$ have the same
norm. Hence, the general cases can be handled in the same way.
Denote
\begin{align*}
\lambda_{1}(v):=&\norm{v}_{\ell^{\infty}_{\alpha}(L^{2}_{t,x}(\R\times
Q_{\alpha}))},\\
\lambda_{2}(v):=&\norm{v}_{\ell^{2^*}_{\alpha}(L^{\infty}_{t,x}(\R\times
Q_{\alpha}))},\\
\lambda_{3}(v):=&\norm{v}_{\ell^{2^*}_{\alpha}(L^{2m}_{t}L^{\infty}_{x}(\R\times
Q_{\alpha}))},
 \end{align*}
 Put
\begin{equation*}
{\mathscr{D}_{n}}=\left\{{ u:\sum_{|\beta|\leq\
[n/2]+7}\lambda_{1}(D^{\beta}u)+\sum_{|\beta|\leq3}\sum_{i=2,3}\lambda_{i}(D^{\beta}u)}\leq\varrho
\right\}.
\end{equation*}
Considering the mapping
\begin{equation}
\mathscr{\mathscr{T}}:u(t)\mapsto
S_{\eps}(t)u_{0}-i\mathscr{A}_{\eps}F((\partial_x^\alpha
u)_{\abs{\alpha}\leq 3}),\label{ct}
\end{equation}
we show that $\mathscr{T}:{\mathscr{D}_{n}}\rightarrow
{\mathscr{D}_{n}}$ is a contraction mapping for any $n\geq 3$.

\emph{Step 1.} For any $u\in \mathscr{D}_{n}$, we first estimate
$\lambda_{1}(D^{\beta}\mathscr{T}u)$ for
$|\beta|\leq7+[\frac{n}{2}]$. Using Lemma \ref{l41}, the cases
$|\beta|=1$ and $|\beta|=2$ can be dominated by terms of the cases
$|\beta|=0$ and $|\beta|=3$. Thus, we only need to consider the
cases $|\beta|=0$ and $|\beta|=3$. We split it into
 three cases as the following.

\emph{Case 1.} $n\geq3$, $3\leq |\beta|\leq7+[\frac{n}{2}]$. For
simplicity, we denote
$\Lambda_{\tilde{R}}=\Lambda_{R_{0},R_{1},R_{2},R_{3}}$. In view
of Corollary \ref{c35} and Proposition \ref{p36}, we have for any
$\beta$ with $3\leq |\beta|\leq7+[\frac{n}{2}]$ that
\begin{align}
&\lambda_{1}(D^{\beta}\mathscr{T}u)\nonumber\\
\lesssim
&\norm{S_{\eps}(t)D^{\beta}u_0}_{\ell^{\infty}_{\alpha}(L^{2}_{t,x}(\R\times
Q_{\alpha}))}\nonumber\\
&+\sum_{|\beta|\leq4+[\frac{n}{2}]}\sum_{\Lambda_{\tilde{R}}}\sum_{\alpha\in\mathbb{Z}^{n}}
\norm{D^{\beta}[u^{R_{0}}(D^{\alpha_{1}}
u)^{R_{1}}(D^{\alpha_{2}}u)^{R_{2}}(D^{\alpha_{3}}u)^{R_{3}}]}_{L^{2}_{t,x}(\R\times
Q_{\alpha})}\nonumber\\
\lesssim &
\norm{u_{0}}_{H^s}+\sum_{|\beta|\leq4+[\frac{n}{2}]}\sum_{\Lambda_{\tilde{R}}}\sum_{\alpha\in\mathbb{Z}^{n}}\norm{D^{\beta}[u^{R_{0}}(D^{\alpha_{1}}
u)^{R_{1}}(D^{\alpha_{2}}u)^{R_{2}}(D^{\alpha_{3}}u)^{R_{3}}]}_{L^{2}_{t,x}(\R\times
Q_{\alpha})}.\label{ct1}
\end{align}
For simplicity, we only consider the case
$u^{R_{0}}(D^{\alpha_{1}}
u)^{R_{1}}(D^{\alpha_{2}}u)^{R_{2}}(D^{\alpha_{3}}u)^{R_{3}}=u^{R_{0}}(\partial^{3}_{x_{1}}u)^{R_{3}}$
in \eqref{ct1} and the general case can be treated in an analogous
way\footnote{One can see below for a general treating.}. So, one
can rewrite \eqref{ct1} as
\begin{equation*}
\sum_{3\leq|\beta|\leq7+[\frac{n}{2}]}\lambda_{1}(D^{\beta}\mathscr{T}u)\lesssim
\norm{u_{0}}_{H^s}+\sum_{|\beta|\leq4+[\frac{n}{2}]}\sum_{\Lambda_{\tilde{R}}}\sum_{\alpha\in\mathbb{Z}^{n}}\norm{D^{\beta}[u^{R_{0}}(\partial^{3}_{x_{1}}u)^{R_{3}}]}_{L^{2}_{t,x}(\R\times
Q_{\alpha})}.
\end{equation*}

Observing that
\begin{equation}
|D^{\beta}[u^{R_{0}}(\partial^{3}_{x_{1}}u)^{R_{3}}]|\lesssim
\sum_{\beta_{1}+\cdots+\beta_{\tilde{R}}
=\beta}|D^{\beta_{1}}u\cdots
D^{\beta_{R}}uD^{\beta_{R_{0}+1}}(\partial^{3}_{x_{1}}u)D^{\beta_{\tilde{R}}}(\partial^{3}_{x_{1}}u)|.\label{dd1}
\end{equation}
By H\"{o}lder's inequality, we get
\begin{equation}
\norm{D^{\beta}[u^{R_{0}}(\partial^{3}_{x_{1}}u)^{R_{3}}]}_{L^{2}_{x}(Q_{\alpha})}\lesssim
\sum_{\beta_{1}+...+\beta_{\tilde{R}}}\prod_{i=1}^{R_{0}}\norm{D^{\beta_{i}}u}_{L^{p_{i}}(Q_{\alpha})}
\prod_{i=R_{0}+1}^{\tilde{R}}\norm{D^{\beta_{i}}(\partial^{3}_{x_{1}}u)}_{L^{p_{i}}(Q_{\alpha})},\label{dd2}
\end{equation}
where
\begin{align*}
p_{i}=\left\{
\begin{array}{ll}
2|\beta|/|\beta_{i}|,&|\beta_{i}|\geq1,\\
\infty, &|\beta_{i}|=0.
\end{array}
\right.
\end{align*}
It is clear that for $\theta_{i}=|\beta_{i}|/|\beta|$,
$$\frac{1}{p_{i}}-\frac{|\beta_{i}|}{n}=\theta_{i}(\frac{1}{2}-\frac{|\beta|}{n})+\frac{1-\theta_{i}}{\infty}.$$
Using  Sobolev's inequality, one has that for
$B_{\alpha}=\{x:|x-a|\leq\sqrt{n}\}$
\begin{equation}
\norm{D^{\beta_{i}}u}_{L^{p_{i}}_{x}(Q_{\alpha})}\leq\norm{D^{\beta_{i}}u}_{L^{p_{i}}_{x}(B_{\alpha})}\lesssim
\norm{u}^{1-\theta_{i}}_{L^{\infty}_{x}(B_{\alpha})}\norm{u}^{\theta_{i}}_{W^{|\beta|}_{2}(B_{\alpha})},\quad
i=1,\cdots,R_{0},\label{sb1}
\end{equation}
\begin{equation}
\norm{D^{\beta_{i}}(\partial^{3}_{x_{1}}u)}_{L^{p_{i}}_{x}(Q_{\alpha})}\lesssim
\norm{\partial^{3}_{x_{1}}u}^{1-\theta_{i}}_{L^{\infty}_{x}(B_{\alpha})}
\norm{\partial^{3}_{x_{1}}u}^{\theta_{i}}_{W^{|\beta|}_{2}(B_{\alpha})},\quad
i=R_{0}+1,\cdots,\tilde{R}.\label{sb2}
\end{equation}
Since
$$\sum_{i=1}^{\tilde{R}}\theta_{i}=1,\quad \sum_{i=1}^{\tilde{R}}(1-\theta_{i})=\tilde{R}-1,$$
by \eqref{dd1}-\eqref{sb1} we have
\begin{align}
\norm{D^{\beta}[u^{R_{0}}(\partial^{3}_{x_{1}}u)^{R_{3}}]}_{L^{2}_{x}(Q_{\alpha})}\lesssim
&\sum_{|\beta|\leq4+[\frac{n}{2}]}\left(\norm{u}_{W^{|\beta|}_{2}(B_{\alpha})}+\norm{\partial^{3}_{x_{1}}u}_{W^{|\beta|}_{2}(B_{\alpha})}\right)\nonumber\\
&\times\left(\norm{u}^{\tilde{R}-1}_{L^{\infty}_{x}(B_{\alpha})}+\norm{\partial^{3}_{x_{1}}u}^{\tilde{R}-1}_{L^{\infty}_{x}(B_{\alpha})})\right)\nonumber\\
\lesssim
&\sum_{|\gamma|\leq7+[\frac{n}{2}]}\norm{D^{\gamma}u}_{L^{2}_{x}(B_{\alpha})}\sum_{|\beta|\leq3}
\norm{D^{\beta}(\partial^{3}_{x_{1}}u)}^{\tilde{R}-1}_{L^{\infty}_{x}(B_{\alpha})}.\label{sb3}
\end{align}
It follows, from \eqref{sb2} and
$\ell^{2^*}\subset\ell^{\tilde{R}-1}$, that
\begin{align*}
&\sum_{|\beta|\leq4+[\frac{n}{2}]}\sum_{\alpha\in\mathbb{Z}^n}\norm{D^{\beta}[u^{R_{0}}(\partial^{3}_{x_{1}}u)^{R_{3}}]}_{L^{2}_{t,x}(\R\times
Q_{\alpha})}\\
\lesssim&
\sum_{\alpha\in\mathbb{Z}^n}\sum_{|\gamma|\leq7+[\frac{n}{2}]}\norm{D^{\gamma}u}_{L^{2}_{t,x}(\R\times
B_{\alpha})}\sum_{|\beta|\leq3}\norm{D^{\beta}u}_{L^{\infty}_{t,x}(\R\times
B_{\alpha})}\\
\lesssim&
\sum_{|\gamma|\leq7+[\frac{n}{2}]}\lambda_{1}(D^{\gamma}u)\sum_{|\beta|\leq3}\lambda_{2}(D^{\beta}u)^{\tilde{R}-1}.
\end{align*}
Hence, in view of \eqref{ct1}
 and \eqref{sb3} we have
\begin{equation*}
\sum_{3\leq|\beta|\leq7+[\frac{n}{2}]}\lambda_{1}(D^{\beta}\mathscr{T}u)\lesssim\norm{u_0}_{H^{s}}+\sum_{\tilde{R}=m+1}^{M+1}\varrho^{\tilde{R}}.
\end{equation*}

\emph{Case 2.} $n\geq5$ and $|\beta|=0 $. By Corollary \ref{c35},
the local Strichartz estimate \eqref{str6} and H\"{o}lder's
inequality, we obtain
\begin{align*}
\lambda_{1}(\mathscr{T}u)\lesssim
&\norm{S_{\eps}(t)u_{0}}_{\ell^{\infty}_{\alpha}(L^{2}_{t,x}(\R\times
Q_{\alpha}))}+\norm{\mathscr{A}_{\eps}F((\partial_x^\alpha
u)_{\abs{\alpha}\leq3})}_{\ell^{\infty}_{\alpha}(L^{2}_{t,x}(\R\times
Q_{\alpha}))}\\
\lesssim
&\norm{u_{0}}_{L^2}+\sum_{\alpha\in\mathbb{Z}^{n}}\norm{F((\partial_x^\alpha
u)_{\abs{\alpha}\leq3})}_{L^{1}_{t}L^{2}_{x}(\R\times
Q_{\alpha})}\\
\lesssim
&\norm{u_0}_{L^2}+\sum_{\Lambda_{\tilde{R}}}\sum_{\alpha\in\mathbb{Z}^n}\norm{u^{R_{0}}(\partial^{3}_{x_{1}}u)^{R_{3}}}_{L^{1}_{t}L^{2}_{x}(\R\times
Q_{\alpha})}
\\
 \lesssim
&\norm{u_{0}}_{L^2}+\sum_{\tilde{R}=m+1}^{M+1}\sum_{|\gamma|\leq3}\sup_{\alpha\in\mathbb{Z}^{n}}
\norm{D^{\gamma}u}_{L^{2}_{t,x}(\R\times
Q_{\alpha})}\sum_{|\beta|\leq3}\sum_{\alpha\in\mathbb{Z}^{n}}
\norm{D^{\beta}u}^{\tilde{R}-1}_{L^{2(\tilde{R}-1)}_{t}L^{\infty}_{x}(\R\times
Q_{\alpha})}\\
\lesssim
&\norm{u_{0}}_{L^2}+\sum_{\tilde{R}=m+1}^{M+1}\sum_{|\gamma|\leq3}\lambda_{1}({D^{\gamma}u})\sum_{i=2,3}\sum_{|\beta|\leq3}\lambda_{i}(D^{\beta}u)^{\tilde{R}-1}\\
\lesssim
&\norm{u_0}_{L^2}+\sum_{{\tilde{R}=m+1}}^{M+1}\varrho^{\tilde{R}}.
\end{align*}

\emph{Case 3.} $n=3,4$, $|\beta|=0$. By Propositions \ref{p32}, we
have
\begin{equation*}
\lambda_{1}(\mathscr{T}u)\lesssim\norm{u_0}_{\dot{H}^{-3/2}}+\sum_{\alpha\in\mathbb{Z}^{n}}\norm{F((\partial_x^\alpha
u)_{\abs{\alpha}\leq3})}_{L^{1}_{t}L^{2}_{x}(\R\times Q_{\alpha})}.
\end{equation*}
Using the same way as in Case 2, we have
\begin{equation}
\lambda_{1}(\mathscr{T}u)\lesssim\norm{u_0}_{\dot{H}^{-3/2}}+\sum_{\tilde{R}=m+1}^{M+1}\varrho^{\tilde{R}}.\label{sct1}
\end{equation}

\emph{Step 2.} We consider the estimates of
$\lambda_{2}(D^{\beta}\mathscr{T}u)$ for $|\beta|\leq3$. Using the
estimates of the maximal function as in Proposition \ref{p25}, we
have for $|\beta|\leq3$ and $0<\rho\ll 1$
\begin{align}
\lambda_{2}(D^{\beta}\mathscr{T}u) \lesssim
&\norm{S_{\eps}(t)D^{\beta}u_0}_{\ell^{2^*}_{\alpha}(L^{\infty}_{t,x}(\R\times
Q_{\alpha}))} +\norm{\mathscr{A}_{\eps}D^{\beta}F((\partial_x^\alpha
u)_{\abs{\alpha}\leq3}))}_{\ell^{2^*}_{\alpha}(L^{\infty}_{t,x}(\R\times
Q_{\alpha}))}\nonumber\\
\lesssim
&\norm{D^{\beta}u_0}_{H^{n/2+\rho}}+\sum_{|\beta|\leq3}\norm{D^{\beta}F((\partial_x^\alpha u)_{\abs{\alpha}\leq3})}_{L^{1}(\R,H^{n/2+\rho})}\nonumber\\
\lesssim
&\norm{u_0}_{H^{n/2+3+\rho}}+\sum_{|\beta|\leq4+[n/2]}\sum_{\alpha\in\mathbb{Z}^{n}}\norm{D^{\beta}F((\partial_x^\alpha
u)_{\abs{\alpha}\leq3})}_{L^{1}_{t}L^{2}_{x}(\R\times
Q_{\alpha}))}.\label{ct2}
\end{align}
Applying the same way as in Step 1, for any $|\beta|\leq4+[n/2]$, we
can get
\begin{eqnarray}
\sum_{\alpha\in\mathbb{Z}^{n}}\norm{D^{\beta}F((\partial_x^\alpha
u)_{\abs{\alpha}\leq3})}_{L^{2}_{x}(B_{\alpha})}\lesssim\sum_{\tilde{R}=m+1}^{M+1}\sum_{|\beta|\leq3}\norm{D^{\beta}u}^{\tilde{R}-1}_{L^{\infty}_{x}(B_{\alpha})}\nonumber\\
\times
\sum_{|\gamma|\leq7+[n/2]}\norm{D^{\gamma}u}_{L^{2}_{x}(B_{\alpha})}.\label{sct2}
\end{eqnarray}
By H\"{o}lder's inequality, we have from \eqref{ct2} that
\begin{align}
\norm{D^{\beta}F((\partial_x^\alpha
u)_{\abs{\alpha}\leq3})}_{L^{1}_{t}L^{2}_{x}(\R\times
B_{\alpha})}\lesssim
\sum_{\tilde{R}=m+1}^{M+1}\sum_{\abs{\gamma}\leq7+[n/2]}\norm{D^{\gamma}u}_{L^{2}_{t,x}(\R\times B_{\alpha})}\nonumber \\
\times\sum_{|\beta|\leq3}\norm{D^{\beta}u}^{\tilde{R}-1}_{L^{2(\tilde{R}-1)}_{t}L^{\infty}_{x}(\R\times
B_{\alpha})}.\label{nsct2}
\end{align}
Summating \eqref{sct2} over all $\alpha\in\Z^{n}$, we have for any
$|\beta|\leq4+[n/2]$
\begin{align*}
&\sum_{\alpha\in\mathbb{Z}^{n}}\norm{D^{\beta}F((\partial_x^\alpha u)_{\abs{\alpha}\leq3})}_{L^{1}_{t}L^{2}_{x}(\R\times Q_{\alpha})}\\
\lesssim
&\sum_{\tilde{R}=m+1}^{M+1}\sum_{|\gamma|\leq7+[n/2]}\lambda_{1}(D^{\gamma}u)\sum_{|\beta|\leq3}\sum_{\alpha\in\mathbb{Z}^{n}}
\norm{D^{\beta}u}^{\tilde{R}-1}_{L^{2(\tilde{R}-1)}_{t}L^{\infty}_{x}(\R\times B_{\alpha})}\\
\lesssim
&\sum_{\tilde{R}=m+1}^{M+1}\sum_{|\gamma|\leq7+[n/2]}\lambda_{1}(D^{\gamma}u)\sum_{|\beta|\leq3}\sum_{\alpha\in\mathbb{Z}^{n}}
\norm{D^{\beta}u}^{\tilde{R}-1}_{(L^{2m}_{t}L^{\infty}_{x})\cap L^{\infty}_{t,x}(\R\times B                               _{\alpha})}\\
\lesssim
&\sum_{\tilde{R}=m+1}^{M+1}\sum_{|\gamma|\leq7+[n/2]}\lambda_{1}(D^{\gamma}u)
\sum_{|\beta|\leq3}(\lambda_{2}(D^{\beta}u)^{\tilde{R}-1}+\lambda_{3}(D^{\beta}u)^{\tilde{R}-1})\\
\lesssim &\sum_{\tilde{R}=m+1}^{M+1}\varrho^{\tilde{R}}.
\end{align*}
Combining \eqref{sct1} with \eqref{nsct2}, we get
\begin{equation*}
\sum_{|\beta|\leq3}\lambda_{2}(D^{\beta}\mathscr{T}u)\lesssim\norm{u_0}_{H^{n/2+3+\rho}}+\sum_{\tilde{R}=m+1}^{M+1}\varrho^{\tilde{R}}.
\end{equation*}

\emph{Step 3.} Now, we estimate $\lambda_{3}(D^{\beta}\mathscr{T}u)$
for $|\beta|\leq3$. In view of Proposition \ref{p29},  one has that
\begin{align*}
&\lambda_{3}(D^{\beta}\mathscr{T}u)\\
\lesssim &
\norm{S_{\eps}(t)D^{\beta}u_0}_{\ell^{2^*}_{\alpha}(L^{2m}_{t}L^{\infty}_{x}(\R\times
Q_{\alpha}))}+\norm{\mathscr{A}_{\eps}D^{\beta}F((\partial_x^\alpha
u)_{\abs{\alpha}\leq3})}_{\ell^{2^*}_{\alpha}(L^{2m}_{t}L^{\infty}_{x}(\R\times
Q_{\alpha}))}\\
\lesssim &
\norm{D^{\beta}u_0}_{H^{n/2-2/m+\rho}}+\sum_{|\beta|\leq3}\norm{D^{\beta}F((\partial_x^\alpha u)_{\abs{\alpha}\leq3})}_{L^{1}(\R,H^{n/2-2/m+\rho}(\R^n))}\\
\lesssim &
\norm{u_0}_{H^{n/2+3+\rho}}+\sum_{|\beta|\leq4+[\frac{n}{2}]}\sum_{\alpha\in\mathbb{Z}^n}\norm{D^{\beta}F((\partial_x^\alpha
u)_{\abs{\alpha}\leq3})}_{L^{1}_{t}L^{2}_{x}(\R\times Q_{\alpha}))},
\end{align*}
which reduces to the case as in \eqref{sct1}.

Therefore, collecting the estimates as in Step 1-3, we have for
$n\geq5$
\begin{equation*}
\sum_{|\beta|\leq\
[n/2]+7}\lambda_{1}(D^{\beta}\mathscr{T}u)+\sum_{|\beta|\leq3}\sum_{i=2,3}\lambda_{i}(D^{\beta}\mathscr{T}u)
\lesssim\norm{u_0}_{H^s}+\sum_{\tilde{R}=m+1}^{M+1}\varrho^{\tilde{R}},
\end{equation*}
and for $n=3,4$
\begin{equation*}
\sum_{|\beta|\leq\
[n/2]+7}\lambda_{1}(D^{\beta}\mathscr{T}u)+\sum_{|\beta|\leq3}\sum_{i=2,3}\lambda_{i}(D^{\beta}\mathscr{T}u)
\lesssim\norm{u_0}_{H^{s}\cap\dot{H}^{-3/2}}+\sum_{\tilde{R}=m+1}^{M+1}\varrho^{\tilde{R}}.
\end{equation*}
It follows that for $n\geq5$, $T:\mathscr{D}_{n}\rightarrow
\mathscr{D}_{n}$ is a contraction mapping if both $\varrho$ and
$\norm{u_0}_{H^s}$ are small
enough (similarly for $n=3,4$). \\

Before considering the case $s>n/2+9/2$, we first recall a nonlinear
mapping estimate (cf.\cite{Wang}).
\begin{lemma}\label{l42}
Let $n\geq2$, $s>0$, $K\in\mathbb{N}$. Let $1\leq p,p_{i}, q,
q_{i}\leq\infty$ satisfy $1/p=1/p_{1}+(K-1)/p_{2}$ and
$1/q=1/q_{1}+(K-1)/q_{2}$. We have
\begin{align}
\norm{v_{1}\cdots
v_{K}}_{\ell^{1,s}_{\triangle}\ell^{1}_{\alpha}(L^{q}_{t}L^{p}_{x}(\R\times
Q_{\alpha}))}\lesssim
&\sum_{k=1}^{K}\norm{v_{k}}_{\ell^{1,s}_{\triangle}\ell^{\infty}_{\alpha}(L^{q_{1}}_{t}L^{p_{1}}_{x}(\R\times
Q_{\alpha}))}\nonumber\\
&\times\prod_{i\neq
k,i=1,...,K}\norm{v_{i}}_{\ell^{1}_{\triangle}\ell^{K-1}_{\alpha}(L^{q_{2}}_{t}L^{p_{2}}_{x}(\R\times
Q_{\alpha}))}.\label{nm}
\end{align}
\end{lemma}
\begin{lemma}\label{l43}
Let $n\geq5$,  for any $s>0$ and any multi-index $\alpha$, we have
\begin{align*}
\sum_{\abs{\alpha}=0,3}\norm{S_{\eps}(t)D^{\alpha}u_0}_{\ell^{1,s}_{\triangle}\ell^{\infty}_{\alpha}(L^{2}_{t,x}(\R\times
Q_{\alpha}))}\lesssim &\norm{u_0}_{B^{s+3/2}_{2,1}},\\
\sum_{\abs{\alpha}=0,3}\norm{\mathscr{A}_{\eps}D^{\alpha}F}_{\ell^{1,s}_{\triangle}\ell^{\infty}_{\alpha}(L^{2}_{t,x}(\R\times
Q_{\alpha}))}\lesssim
&\norm{F}_{\ell^{1,s}_{\alpha}\ell^{1}_{\alpha}(L^{2}_{t,x}(\R\times
Q_{\alpha}))}.
\end{align*}
\begin{proof}
In view of Corollary \ref{c35} and Propositions \ref{p31} and
\ref{p36},  we have the desired results.
\end{proof}
\end{lemma}
\begin{lemma}\label{l44}
let $n=3,4$, for any $s>0$ and any multi-index $\alpha$, we have
\begin{align*}
\sum_{\abs{\alpha}=0,3}\norm{S_{\eps}(t)D^{\alpha}u_0}_{\ell^{1,s}_{\triangle}\ell^{\infty}_{\alpha}(L^{2}_{t,x}(\R\times
Q_{\alpha}))}\lesssim
&\norm{u_0}_{B^{s+3/2}_{2,1}\cap\dot{H}^{-3/2}},\\
\norm{\mathscr{A}_{\eps}D^{\alpha}F}_{\ell^{1,s}_{\triangle}\ell^{\infty}_{\alpha}(L^{2}_{t,x}(\R\times
Q_{\alpha}))}\lesssim
&\norm{F}_{\ell^{1,s}_{\alpha}\ell^{1}_{\alpha}(L^{2}_{t,x}(\R\times
Q_{\alpha}))}, \quad \abs{\alpha}=3,\\
\norm{\mathscr{A}_{\eps}F}_{\ell^{1,s}_{\triangle}\ell^{\infty}_{\alpha}(L^{2}_{t,x}(\R\times
Q_{\alpha}))}\lesssim
&\norm{F}_{\ell^{1,s}_{\alpha}\ell^{1}_{\alpha}(L^{1}_{t}L^{2}_{x}(\R\times
Q_{\alpha}))}.
\end{align*}

\begin{proof}By Propositions \ref{p32},  \ref{p33} and \ref{p36},  we have the
results, as desired.
\end{proof}
\end{lemma}
 We now continue the proof of Theorem \ref{t11} and consider
 the general case $s>n/2+9/2$. We write
\begin{align}
\lambda_{1}(v):=&\sum_{i=0,3}\norm{D^{i}v}_{\ell^{1,s-3/2}_{\triangle}\ell^{\infty}_{\alpha}(L^{2}_{t,x}(\R\times
Q_{\alpha}))}\nonumber,\\
\lambda_{2}(v):=&\sum_{i=0,3}\norm{D^{i}v}_{\ell^{1}_{\triangle}\ell^{2^*}_{\alpha}(L^{\infty}_{t,x}(\R\times
Q_{\alpha}))}\nonumber,\\
\lambda_{3}(v):=&\sum_{i=0,3}\norm{D^{i}v}_{\ell^{1}_{\triangle}\ell^{2^*}_{\alpha}(L^{2m}_{t}L^{\infty}_{x}(\R\times
Q_{\alpha}))}\nonumber,\\
\mathscr{D}=&\{v:\sum_{i=1,2,3}\lambda_{i}(v)\leq\varrho\}.\label{ddd}
\end{align}
Note $\lambda_{i}$ and $\mathscr{D}$ defined here are different from
those in the above. We only give the details of the proof for the
case $n\geq5$. The cases $n=3,4$ can be showed by a slight
modification. Let $\mathscr{T}$ be defined  as in \eqref{ct}.  Using
Lemma \ref{l43}, we have
\begin{equation}
\lambda_{1}(\mathscr{T}u)\lesssim\norm{u_0}_{B^{s}_{2,1}}+\norm{F}_{\ell^{1,s-3/2}_{\triangle}\ell^{1}_{\alpha}(L^{2}_{t,x}(\R\times
Q_{\alpha}))}.\label{414}
\end{equation}
For simplicity, we write
\begin{equation}
F=u^{R_{0}}(D^{\alpha_{1}}
u)^{R_{1}}(D^{\alpha_{2}}u)^{R_{2}}(D^{\alpha_{3}}u)^{R_{3}},\label{SF}
\end{equation}
and $\tilde{R}=R_{0}+|R_{1}|+|R_{2}|+|R_{3}|$. Here
$\abs{\alpha_{i}}=i$ $(i=1,2,3)$ are multi-index. By Lemma
\ref{l42}, we have
\begin{align}
&\norm{F}_{\ell^{1,s-3/2}_{\triangle}\ell^{1}_{\alpha}(L^{2}_{t,x}(\R\times
Q_{\alpha}))}\nonumber\\
&\lesssim\left(\sum_{\abs{\alpha}=0}^{3}\norm{D^{\alpha}u}_{\ell^{1,s-3/2}_{\triangle}\ell^{1}_{\alpha}(L^{2}_{t,x}(\R\times
Q_{\alpha}))}\right)\times\left(\sum_{\abs{\beta}=0}^{3}\norm{D^{\beta}u}^{\tilde{R}-1}_{\ell^{1}_{\triangle}\ell^{\tilde{R}-1}_{\alpha}(L^{\infty}_{t,x}(\R\times
Q_{\alpha}))}\right)\label{FI}
\end{align}
Using Lemma \ref{l41}, the terms of $\abs{\alpha}=1,2$ and
$\abs{\beta}=1,2$ in the above can be dominated by that of
$\abs{\alpha}=0,3$ and $\abs{\beta}=0,3$, respectively. Therefore,
we get
\begin{align*}
&\norm{F}_{\ell^{1,s-3/2}_{\triangle}\ell^{1}_{\alpha}(L^{2}_{t,x}(\R\times
Q_{\alpha}))}\nonumber\\
&\lesssim\left(\sum_{\abs{\alpha}=0,3}\norm{D^{\alpha}u}_{\ell^{1,s-3/2}_{\triangle}\ell^{\infty}_{\alpha}(L^{2}_{t,x}(\R\times
B_{\alpha}))}\right)\times\left(\sum_{\abs{\beta}=0,3}\norm{D^{\beta}u}^{\tilde{R}-1}_{\ell^{1}_{\triangle}
\ell^{\tilde{R}-1}_{\alpha}(L^{\infty}_{t,x}(\R\times
B_{\alpha}))}\right)
\end{align*}
Hence, if $u\in \mathscr{D}$, in view of \eqref{414} and \eqref{FI},
we have
\begin{equation}
\lambda_{1}(\mathscr{T}u)\lesssim\norm{u_0}_{B^{s}_{2,1}}+\sum_{m+1\leq|\tilde{R}|\leq
M+1}\varrho^{\tilde{R}}.\label{gct1}
\end{equation}
In view of the estimate for the maximal function as in Proposition
\ref{p25}, one has that
\begin{equation}
\lambda_{2}(S_{\eps}(t)u_0)\lesssim\norm{u_0}_{B^{s}_{2,1}},\label{gct21}
\end{equation}
and for multi-index $\abs{\alpha}=0,3$
\begin{align}
\norm{\mathscr{A}_{\eps}D^{\alpha}F}_{\ell^{1}_{\triangle}\ell^{2^*}_{\alpha}(L^{\infty}_{t,x}(\R\times
Q_{\alpha}))}\leq
&\sum_{j=0}^{\infty}\int_{\R}\norm{S(t-\tau)(\triangle_{j}D^{\alpha}F)(\tau)}_{\ell^{2^*}_{\alpha}(L^{\infty}_{t,x}(\R\times
Q_{\alpha}))}\nonumber\\
\lesssim
&\sum_{j=0}^{\infty}\int_{\R}\norm{(\triangle_{j}D^{\alpha}F)(\tau)}_{H^{s-9/2}(\R^{n})}d\tau\nonumber\\
\lesssim
&\sum_{j=0}^{\infty}2^{(s-3/2)j}\int_{\R}\norm{(\triangle_{j}F)(\tau)}_{L^{2}(\R^{n})}d\tau.\label{gsct2}
\end{align}
Hence, by \eqref{gct1} and \eqref{gct21}, it follows
\begin{equation}
\lambda_{2}(\mathscr{T}u)\lesssim\norm{u_0}_{B^{s}_{2,1}}+\norm{F}_{\ell^{1,s-3/2}_{\triangle}\ell^{1}_{\alpha}(L^{1}_{t}L^{2}_{x}(\R\times
Q_{\alpha}))}.\label{gct2}
\end{equation}
Similar to \eqref{gsct2}, in view of Proposition \ref{p29},  we have
\begin{equation}
\lambda_{3}(\mathscr{T}u)\lesssim\norm{u_0}_{B^{s}_{2,1}}+\norm{F}_{\ell^{1,s-3/2}_{\triangle}\ell^{1}_{\alpha}(L^{1}_{t}L^{2}_{x}(\R\times
Q_{\alpha}))}.\label{gct3}
\end{equation}
In view of Lemma \ref{l41} and \ref{l42},  we have
\begin{align*}
&\norm{F}_{\ell^{1,s-3/2}_{\triangle}\ell^{1}_{\alpha}(L^{1}_{t}L^{2}_{x}(\R\times
Q_{\alpha}))}\\
\lesssim
&\left(\sum_{\abs{\beta}=0,3}\norm{D^{\beta}u}^{\tilde{R}-1}_{\ell^{1}_{\triangle}\ell^{(\tilde{R}-1)}_{\alpha}
(L^{2(\tilde{R}-1)}_{t}L^{\infty}_{x}(\R\times
B_{\alpha}))}\right)\times\left(\sum_{\abs{\alpha}=0,3}\norm{D^{\alpha}u}_{\ell^{1,s-3/2}_{\triangle}\ell^{\infty}_{\alpha}(L^{2}_{t,x}(\R\times
B_{\alpha}))}\right).
\end{align*}
Hence,  if $u\in \mathscr{D}$,  we have
\begin{equation*}
\lambda_{2}(\mathscr{T}u)+\lambda_{3}(\mathscr{T}u)\lesssim\norm{u_0}_{B^{s}_{2,1}}+\sum_{m+1\leq|\tilde{R}|\leq
M+1}\varrho^{\tilde{R}}.
\end{equation*}
Repeating the procedures as in the above,  we obtain that there
exists a unique $u\in \mathscr{D}$ satisfying the integral equation
$\mathscr{T}u=u$, which finishes the proof of Theorem \ref{t11}.
\end{proof}

\section{Proof of Theorem 1.2}

We prove Theorem \ref{t14}  by following some idea as in Wang and
Wang \cite{Wang}. The following is the estimate for the solutions
of the fourth oredr Schr\"{o}dinger equation, see Kenig, Ponce and
Vega \cite{KPV1} and Hao, Hsiao and Wang \cite{Hao}. Recall that
$\triangle_{j}:=\mathscr{F}^{-1}\delta(2^{-j}\cdot)\mathscr{F}$,
$j\in\mathbb{Z}$ and $\delta(\cdot)$ are as in Section
\ref{functionspace}.

\begin{lemma}\label{l71}
Let $g\in\mathscr{S}(\R),f\in\mathscr{S}(\R^{2})$, we have
\begin{align}
\norm{\triangle_{j}S_{0}(t)g}_{L^{\infty}_{t}L^{2}_{x}\cap
L^{10}_{t,x}}\lesssim &\norm{\triangle_{j}g}_{2},\label{1str1}\\
\norm{\triangle_{j}S_{0}(t)g}_{L^{p}_{x}L^{\infty}_{t}}\lesssim &
2^{j(\frac{1}{2}-\frac{1}{p})}\norm{\triangle_{j}g}_{2},\label{1str2}\\
\norm{\triangle_{j}S_{0}(t)g}_{L^{\infty}_{x}L^{2}_{t}}\lesssim &
2^{-\frac{3j}{2}}\norm{\triangle_{j}g}_{2},\label{1str3}\\
\norm{\triangle_{j}\mathscr{A}_{0}f}_{L^{\infty}_{t}L^{2}_{x}\cap
L^{10}_{t,x}}\lesssim &\norm{\triangle_{j}f}_{L^{10/9}_{x,t}},\label{1str4}\\
\norm{\triangle_{j}\mathscr{A}_{0}f}_{L^{p}_{x}L^{\infty}_{t}}\lesssim
&
2^{j(\frac{1}{2}-\frac{1}{p})}\norm{\triangle_{j}f}_{L^{10/9}_{x,t}},\label{1str5}\\
\norm{\triangle_{j}\mathscr{A}_{0}f}_{L^{\infty}_{x}L^{2}_{t}}\lesssim
&
2^{-\frac{3j}{2}}\norm{\triangle_{j}f}_{L^{10/9}_{x,t}},\label{1str6}
\end{align}
and
\begin{align}
\norm{\triangle_{j}\mathscr{A}_{0}(\partial^{3}_{x}f)}_{L^{\infty}_{t}L^{2}_{x}\cap
L^{10}_{t,x}}\lesssim &2^{\frac{3j}{2}}\norm{\triangle_{j}f}_{L^{1}_{x}L^{2}_{t}},\label{1str7}\\
\norm{\triangle_{j}\mathscr{A}_{0}(\partial^{3}_{x}f)}_{L^{p}_{x}L^{\infty}_{t}}\lesssim
&
2^{\frac{3j}{2}}2^{j(\frac{1}{2}-\frac{1}{p})}\norm{\triangle_{j}f}_{L^{1}_{x}L^{2}_{t}},\label{1str8}\\
\norm{\triangle_{j}\mathscr{A}_{0}(\partial^{3}_{x}f)}_{L^{\infty}_{x}L^{2}_{t}}\lesssim
&\norm{\triangle_{j}f}_{L^{1}_{x}L^{2}_{t}}.\label{1str9}
\end{align}
\end{lemma}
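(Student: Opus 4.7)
The estimates fall into three natural groups, and I would treat them in order. First I would establish the homogeneous bounds \eqref{1str1}--\eqref{1str3} for $S_0(t)=e^{it\partial_x^4}$ on a single dyadic piece. The $L^\infty_t L^2_x$ part of \eqref{1str1} is unitarity; the $L^{10}_{t,x}$ part is the standard $L^2$-Strichartz estimate for the fourth order Schr\"odinger group on $\R$, since the admissibility relation $4/q = 1\cdot(1/2-1/p)$ has the diagonal solution $(q,p)=(10,10)$. Estimate \eqref{1str3} is a direct application of Lemma \ref{l34} with $\phi(\xi)=\xi^4$: on $\supp \delta(2^{-j}\cdot)$ one has $|\phi'(\xi)|^{-1/2}=|4\xi^3|^{-1/2}\sim 2^{-3j/2}$, which produces the stated weight. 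For the maximal function estimate \eqref{1str2}, I would reduce to unit frequency via the scaling $(t,x)\mapsto(2^{-4j}t,2^{-j}x)$: the $L^p_x L^\infty_t$ norm scales as $2^{-j/p}$ and the $L^2_x$ norm as $2^{-j/2}$, giving the factor $2^{j(1/2-1/p)}$, and the reduced unit-frequency bound is a classical maximal function estimate (cf.\ \cite{KPV1,KPV2}).

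Second, I would obtain \eqref{1str4}--\eqref{1str6} by a $TT^*$ composition followed by the Christ-Kiselev lemma. Dualizing \eqref{1str1} gives
\begin{equation*}
\Big\|\int_{\R} S_0(-\tau)\triangle_j F(\tau)\,d\tau\Big\|_{L^2}\lesssim \|\triangle_j F\|_{L^{10/9}_{x,t}}.
\end{equation*}
Composing on the left with $S_0(t)$ and applying each of \eqref{1str1}--\eqref{1str3} yields the non-retarded versions of \eqref{1str4}--\eqref{1str6} with $\int_\R$ in place of $\int_0^t$ and the expected dyadic weights. Since none of the pairs involved equals the forbidden endpoint $(q,p)=(2,\infty)$, the Christ-Kiselev lemma converts $\int_\R$ into the retarded integral $\mathscr{A}_0=\int_0^t$, giving the claimed inhomogeneous bounds.

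Third, for \eqref{1str7}--\eqref{1str9} I would run the same $TT^*$ scheme but pair each of \eqref{1str1}--\eqref{1str3} with the adjoint of the local smoothing estimate \eqref{1str3}, namely
\begin{equation*}
\Big\|\int_{\R} S_0(-\tau)\triangle_j F(\tau)\,d\tau\Big\|_{L^2}\lesssim 2^{-3j/2}\|\triangle_j F\|_{L^1_x L^2_t}.
\end{equation*}
The extra $\partial_x^3$ is absorbed by the Bernstein-type bound $\|\triangle_j \partial_x^3 f\|_{L^1_x L^2_t}\lesssim 2^{3j}\|\triangle_j f\|_{L^1_x L^2_t}$ (since $\partial_x^3$ commutes with $S_0(t)$ and $\triangle_j$, and frequency localization yields the pointwise-in-$x$ Bernstein inequality after taking $L^2_t$ first). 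The resulting cumulative dyadic weights are $2^{-3j/2}\cdot 2^{3j}=2^{3j/2}$ for \eqref{1str7}, $2^{j(1/2-1/p)}\cdot 2^{-3j/2}\cdot 2^{3j}=2^{j(1/2-1/p)}2^{3j/2}$ for \eqref{1str8}, and $2^{-3j/2}\cdot 2^{-3j/2}\cdot 2^{3j}=1$ for \eqref{1str9}, matching the statement. The main technical obstacle is the careful bookkeeping of the bilinear $TT^*$ pairings in mixed-norm spaces such as $L^1_x L^2_t$ and $L^p_x L^\infty_t$, which do not fit the Keel-Tao framework directly; verifying the non-endpoint hypothesis of Christ-Kiselev for each combination (all involved time exponents are finite, so this check is routine) is the step requiring the most care.
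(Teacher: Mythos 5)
The paper does not actually prove Lemma \ref{l71}; it states it and refers the reader to Kenig--Ponce--Vega \cite{KPV1} and to Hao--Hsiao--Wang \cite{Hao}, so there is no internal proof to compare against. Your sketch is a reasonable reconstruction of the standard route for most of the estimates, but it has one genuine gap and one minor misstep.

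The gap is in \eqref{1str9}. Your scheme is $TT^{*}$ (pairing the dual of \eqref{1str3} with \eqref{1str3} itself) followed by Christ--Kiselev to pass from $\int_{\R}$ to $\int_{0}^{t}$. However, for \eqref{1str9} the source and target time exponents are both equal to $2$ (input $L^{1}_{x}L^{2}_{t}$, output $L^{\infty}_{x}L^{2}_{t}$), and the Christ--Kiselev lemma, even in its mixed-norm variants, requires the strict inequality $q_{1}<q_{2}$ between the time exponents. Your remark that ``all involved time exponents are finite, so this check is routine'' misses this: the forbidden case is not $(2,\infty)$ here but the equality $q_{1}=q_{2}=2$. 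This is precisely the estimate that cannot be obtained by $TT^{*}$ plus retardation; in the literature (and in the paper's own multi-dimensional analogue, Proposition~\ref{p36}) it is proved by a direct Fourier computation: take Plancherel in $t$, observe the multiplier $\xi^{3}/(\tau-\xi^{4})$ is bounded uniformly in $\tau$ on a frequency-localized annulus, and then close by Plancherel in $x$. Without such a direct argument, \eqref{1str9} is not established by your method. By contrast, your $TT^{*}$ and Christ--Kiselev reasoning for \eqref{1str4}--\eqref{1str8} is sound, since there the source time exponent ($10/9$ or $2$) is strictly less than the target time exponent ($10$, $\infty$, or $2$ only in the \eqref{1str6} case where the source is $10/9$).

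A smaller point: for \eqref{1str3} you invoke Lemma~\ref{l34}, but that lemma as stated in the paper is restricted to $n\geq2$. In one dimension the estimate $\|S_0(t)g\|_{L^\infty_x L^2_t}\lesssim\||\xi|^{-3/2}\widehat g\|_{L^2}$ should instead be obtained from the one-dimensional change of variables $\eta=\xi^{4}$ together with Plancherel in $t$ (the classical Kenig--Ponce--Vega local smoothing argument), which is simpler and does give the $2^{-3j/2}$ weight on $|\xi|\sim 2^{j}$; the conclusion is of course the same, but the cited lemma does not apply as written. The rest of your sketch --- unitarity and diagonal Strichartz for \eqref{1str1}, scaling to unit frequency for \eqref{1str2}, the Bernstein absorption $2^{3j}$ of $\partial_{x}^{3}$, and the bookkeeping of dyadic weights --- is correct.
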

For convenience, we write for any Banach function space X,
$$\norm{f}_{\ell^{1,s}_{\triangle}(X)}=\sum_{j\in\mathbb{Z}}2^{sj}\norm{\triangle_{j}f}_{X},\ \ \
  \ \norm{f}_{\ell^{1}_{\triangle}(X)}:= \norm{f}_{\ell^{1,0}_{\triangle}(X)}.    $$
Now we recall a result of Wang and Wang, see \cite{Wang}.
\begin{lemma}\label{l72}
Let $s>0$, $1\leq p,p_{i},\gamma,\gamma_{i}\leq\infty$ satisfy
\begin{equation*}
\frac{1}{p}=\frac{1}{p_{1}}+...+\frac{1}{p_{N}}, \quad
\frac{1}{\gamma}=\frac{1}{\gamma_{1}}+...+\frac{1}{\gamma_{N}},
\end{equation*}
then
\begin{align}
\norm{u_{1}\cdots
u_{N}}_{\ell^{1,s}_{\triangle}(L^{p}_{x}L^{\gamma}_{t})}\lesssim &
\norm{u_{1}}_{\ell^{1,s}_{\triangle}(L^{p_{1}}_{x}L^{\gamma_{1}}_{t})}\prod_{i=2}^{N}
\norm{u_{i}}_{\ell^{1}_{\triangle}(L^{p_{i}}_{x}L^{\gamma_{i}}_{t})}\nonumber\\
&+\norm{u_{2}}_{\ell^{1,s}_{\triangle}(L^{p_{2}}_{x}L^{\gamma_{2}}_{t})}\prod_{i\neq2,i=1,...,N}
\norm{u_{i}}_{\ell^{1}_{\triangle}(L^{p_{i}}_{x}L^{\gamma_{i}}_{t})}\nonumber\\
&+\cdots+\norm{u_N}_{\ell^{1,s}_{\triangle}(L^{p_{N}}_{x}L^{\gamma_{N}}_{t})}\prod_{i=1}^{N-1}\norm{u_{i}}_{\ell^{1}_{\triangle}(L^{p_{i}}_{x}L^{\gamma_{i}}_{t})}.\label{nm2}
\end{align}

In particular, if $u_{1}=\cdots=u_{N}=u$, then
\begin{equation}
\norm{u^{N}}_{\ell^{1,s}_{\triangle}(L^{p}_{x}L^{\gamma}_{t})}
\lesssim\norm{u}_{\ell^{1,s}_{\triangle}(L^{p_{1}}_{x}L^{\gamma_{1}}_{t})}\prod_{i=2}^{N}
\norm{u}_{\ell^{1}_{\triangle}(L^{p_{i}}_{x}L^{\gamma_{i}}_{t})}.\label{nms}
\end{equation}
Replace the spaces $L^{p}_{x}L^{\gamma}_{t}$ and
$L^{p_{i}}_{x}L^{\gamma_{i}}_{t}$ by $L^{\gamma}_{t}L^{p}_{x}$ and
$L^{\gamma_{i}}_{t}L^{p_{i}}_{x}$, respectively, \eqref{nm2} and
\eqref{nms} also hold.
\end{lemma}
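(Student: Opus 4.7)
The plan is to prove Lemma \ref{l72} by a paraproduct-style argument based on the Littlewood--Paley decomposition and Bony-type frequency bookkeeping, followed by Hölder's inequality in the mixed norm $L^{p}_{x}L^{\gamma}_{t}$ and a geometric summation that exploits $s>0$. I work only with the product formulation \eqref{nm2}; the special case \eqref{nms} and the variant with $L^{p}_{x}L^{\gamma}_{t}$ replaced by $L^{\gamma}_{t}L^{p}_{x}$ follow by identical reasoning after permuting the order of the norms (using Minkowski's inequality is not needed since the roles of $t$ and $x$ are symmetric in the argument).

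First, I decompose each factor as $u_{i}=\sum_{j_{i}\in\Z}\triangle_{j_{i}}u_{i}$, so that
\begin{equation*}
u_{1}\cdots u_{N}=\sum_{j_{1},\ldots,j_{N}\in\Z}\triangle_{j_{1}}u_{1}\cdots\triangle_{j_{N}}u_{N}.
\end{equation*}
Since $\widehat{\triangle_{j_{i}}u_{i}}$ is supported in the annulus $\{|\xi|\sim 2^{j_{i}}\}$, the spectrum of the product $\triangle_{j_{1}}u_{1}\cdots\triangle_{j_{N}}u_{N}$ is contained in a sum of $N$ annuli and thus in the ball $\{|\xi|\lesssim 2^{\max_{i}j_{i}}\}$. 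Consequently, there exists a universal constant $C_{N}$ such that
\begin{equation*}
\triangle_{k}\bigl(\triangle_{j_{1}}u_{1}\cdots\triangle_{j_{N}}u_{N}\bigr)=0\quad\text{whenever}\quad \max_{i}j_{i}<k-C_{N}.
\end{equation*}
I split the sum over $\vec{j}$ according to which index realizes the maximum: modulo a harmless tie-breaking, write $\triangle_{k}(u_{1}\cdots u_{N})=\sum_{\ell=1}^{N}I_{k,\ell}$, where $I_{k,\ell}$ collects those $\vec{j}$ with $j_{\ell}\geq j_{i}$ for $i\neq\ell$ (and necessarily $j_{\ell}\geq k-C_{N}$).

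Next, for each fixed $\vec{j}$ I apply Hölder's inequality in $L^{p}_{x}L^{\gamma}_{t}$ combined with the uniform boundedness of $\triangle_{k}$ on mixed Lebesgue spaces (which follows from the fact that $\triangle_{k}$ is convolution in $x$ with a Schwartz function of $L^{1}$-norm independent of $k$, and from Minkowski's inequality), obtaining
\begin{equation*}
\norm{\triangle_{k}(\triangle_{j_{1}}u_{1}\cdots\triangle_{j_{N}}u_{N})}_{L^{p}_{x}L^{\gamma}_{t}}\lesssim\prod_{i=1}^{N}\norm{\triangle_{j_{i}}u_{i}}_{L^{p_{i}}_{x}L^{\gamma_{i}}_{t}}.
\end{equation*}
Multiplying by $2^{sk}$, summing in $k$ with the constraint $k\leq j_{\ell}+C_{N}$, and using that $s>0$ gives $\sum_{k\leq j_{\ell}+C_{N}}2^{sk}\lesssim 2^{sj_{\ell}}$. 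The resulting weight $2^{sj_{\ell}}$ sits naturally on the $\ell$-th factor.

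Finally, I factor the remaining sum over $\vec{j}$: the sum over $j_{\ell}$ carrying the $2^{sj_{\ell}}$ weight produces $\norm{u_{\ell}}_{\ell^{1,s}_{\triangle}(L^{p_{\ell}}_{x}L^{\gamma_{\ell}}_{t})}$, while each sum over $j_{i}$ for $i\neq\ell$ is bounded above by $\norm{u_{i}}_{\ell^{1}_{\triangle}(L^{p_{i}}_{x}L^{\gamma_{i}}_{t})}$ (the constraint $j_{i}\leq j_{\ell}$ is dropped and replaced by a sum over all $j_{i}\in\Z$). Adding the contributions of $\ell=1,\ldots,N$ yields exactly \eqref{nm2}. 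The main obstacle is the bookkeeping of the paraproduct decomposition in the mixed-norm setting: the spectral localization argument must be carried out carefully to ensure that only finitely many scales contribute near the output frequency $2^{k}$, and one must check that the convolution kernels of $\triangle_{k}$ act boundedly on $L^{p}_{x}L^{\gamma}_{t}$ uniformly in $k$; once these are in hand, the Hölder plus geometric-sum step is routine.
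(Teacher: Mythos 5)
Your proof is correct, but there is nothing in the present paper to compare it against: the paper states Lemma \ref{l72} as "a result of Wang and Wang" with a bare citation to \cite{Wang} and gives no proof at all. Your argument is the standard one for such $\ell^{1,s}_{\triangle}$ product estimates — Littlewood--Paley expand every factor, use the spectral localization $\triangle_k\bigl(\triangle_{j_1}u_1\cdots\triangle_{j_N}u_N\bigr)=0$ unless $\max_i j_i\geq k-C_N$, apply mixed-norm H\"older together with the uniform $L^{p}_{x}L^{\gamma}_{t}$-boundedness of $\triangle_k$ (which, since $\triangle_k$ convolves only in $x$, does use Minkowski's integral inequality to push the $L^{\gamma}_{t}$-norm inside; your parenthetical remark that Minkowski is "not needed" is slightly misleading, though harmless), and finally use $s>0$ to sum the geometric series $\sum_{k\leq j_\ell+C_N}2^{sk}\lesssim 2^{sj_\ell}$ before factoring the sum over $\vec{j}$ by which index attains the maximum. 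One small bookkeeping point worth making explicit: rather than first choosing the maximal index $\ell$ for each $\vec{j}$ and then summing $k$, it is cleaner to swap the $k$- and $\vec{j}$-sums, use $2^{s\max_i j_i}\leq\sum_{\ell}2^{sj_\ell}$, and then factor — this sidesteps the tie-breaking entirely. The passage to the $L^{\gamma}_{t}L^{p}_{x}$ version is even simpler than the one you handled, since $\triangle_k$ acting only in $x$ is bounded on $L^{p}_{x}$ pointwise in $t$ with no Minkowski step required. The argument is sound and complete.
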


\begin{remark}[\cite{Wang}]\label{rmk7.3}
One easily sees that \eqref{nms} can be sightly improved by
\begin{equation}
\norm{u^{N}}_{\ell^{1,s}_{\triangle}(L^{p}_{x}L^{\gamma}_{t})}\lesssim\norm{u}_{\ell^{1,s}_{\triangle}(L^{p_{1}}_{x}L^{\gamma_{1}}_{t})})\prod_{i=2}^{N}
\norm{u}_{L^{p_{i}}_{x}L^{\gamma_{i}}_{t}}.\label{nmt}
\end{equation}
In fact, from Minkowski's inequality it follows that
\begin{equation}
\norm{S_{r}u}_{L^{p}_{x}L^{\gamma}_{t}}\lesssim\norm{u}_{L^{p}_{x}L^{\gamma}_{t}}.\label{bnm}
\end{equation}
From Lemma \ref{l72} and \eqref{bnm}, we get \eqref{nmt}.
\end{remark}

\begin{proof}[Proof of Theorem \ref{t14}]
We can assume, without loss of generality, that
\begin{equation*}
F((\partial_x^\alpha u)_{\abs{\alpha}\leq 3},
(\partial_x^\alpha\bar{u})_{\abs{\alpha}\leq
3})=\sum_{m+1\leq\tilde{R}\leq
M+1}\lambda_{R_{0}R_{1}R_{2}R_{3}}u^{R_0}u^{R_{1}}_{x}u^{R_2}_{xx}u^{R_3}_{xxx}.
\end{equation*}
where $\tilde{R}$ is as before.

\emph{Step 1.} We consider the case $m>8$. Recall that
\begin{align}
\norm{u}_{X}=&\sup_{s_{m}\leq s\leq
\tilde{s}_{M}}\sum_{i=0,3}\sum_{j\in\mathbb{Z}}|\!|\!|\partial^i_x\triangle_{j}
u|\!|\!|_{s},\label{1sum}\\
|\!|\!|\triangle_{j}v|\!|\!|_{s}:=&2^{sj}(\norm{\triangle_{j}v}_{L^{\infty}_{t}L^{2}_{x}\cap
L^{10}_{t,x}}+2^{\frac{3j}{2}}\norm{\triangle_{j}v}_{L^{\infty}_{x}L^{2}_{t}})\nonumber\\
&+2^{(s-\tilde{s}_{m})j}\norm{\triangle_{j}v}_{L^{m}_{x}L^{\infty}_{t}}+2^{(s-\tilde{s}_{M})j}
\norm{\triangle_{j}v}_{L^{M}_{x}L^{\infty}_{t}}.\nonumber
\end{align}
Considering the mapping
\begin{equation*}
\mathscr{T}:u(t)\mapsto
S_{0}(t)u_{0}-i\mathscr{A}_{0}F((\partial_x^\alpha
u)_{\abs{\alpha}\leq 3},
(\partial_x^\alpha\bar{u})_{\abs{\alpha}\leq 3}),
\end{equation*}
we will show that $\mathscr{T}:X\rightarrow X$ is a contraction
mapping. We have
\begin{equation}
\norm{\mathscr{T}u}_{X}\lesssim\norm{S_{0}(t)u_0}_{X}+\norm{\mathscr{A}_{0}F((\partial_x^\alpha
u)_{\abs{\alpha}\leq 3},
(\partial_x^\alpha\bar{u})_{\abs{\alpha}\leq 3})}_{X}.\label{1ct}
\end{equation}
In view of \eqref{1str1}, \eqref{1str2} and \eqref{1str3}, we have
\begin{equation*}
|\!|\!|\partial^{i}_{x}\triangle_{j}S_{0}(t)u_{0}|\!|\!|_{s}\lesssim2^{sj}\norm{\partial^{i}_{x}\triangle_{j}u_0}_{2}.
\end{equation*}
It follows  that
\begin{equation}
\norm{S_{0}(t)u_0}_{X}\lesssim\sup_{s_{m}\leq s\leq
\tilde{s}_{M}}\sum_{i=0,3}\sum_{j\in\mathbb{Z}}2^{sj}
\norm{\partial^{i}_{x}\triangle_{j}u_0}_{2}\lesssim\norm{u_0}_{\dot{B}^{3+\tilde{s}_{M}}_{2,1}\cap\dot{B}^{s_{m}}_{2,1}}.\label{1q}
\end{equation}
We now estimate $\norm{\mathscr{A}_{0}F((\partial_x^\alpha
u)_{\abs{\alpha}\leq 3},
(\partial_x^\alpha\bar{u})_{\abs{\alpha}\leq 3})}_{X}$. Using
\eqref{1str4}, \eqref{1str5} and \eqref{1str6}, we have
\begin{align}
&|\!|\!|\triangle_{j}(\mathscr{A}_{0}F((\partial_x^\alpha
u)_{\abs{\alpha}\leq 3},
(\partial_x^\alpha\bar{u})_{\abs{\alpha}\leq
3}))|\!|\!|_{s}\lesssim2^{sj}\norm{\triangle_{j}F((\partial_x^\alpha
u)_{\abs{\alpha}\leq 3},
(\partial_x^\alpha\bar{u})_{\abs{\alpha}\leq
3})}_{L^{10/9}_{t,x}}.\label{1bs1}
\end{align}
From \eqref{1str7}, \eqref{1str8} and \eqref{1str9}, it follows that
\begin{align}
&|\!|\!|\triangle_{j}(\mathscr{A}_{0}\partial^{3}_{x}F((\partial_x^\alpha
u)_{\abs{\alpha}\leq 3},
(\partial_x^\alpha\bar{u})_{\abs{\alpha}\leq
3}))|\!|\!|_{s}\lesssim2^{sj}2^{3j/2}\norm{\triangle_{j}F((\partial_x^\alpha
u)_{\abs{\alpha}\leq 3},
(\partial_x^\alpha\bar{u})_{\abs{\alpha}\leq
3})}_{L^{1}_{x}L^{2}_{t}}.\label{1bs2}
\end{align}
Hence, from \eqref{1sum}, \eqref{1bs1} and \eqref{1bs2} we have
\begin{align}
&\norm{\mathscr{A}_{0}F((\partial_x^\alpha u)_{\abs{\alpha}\leq 3}, (\partial_x^\alpha\bar{u})_{\abs{\alpha}\leq 3})}_{X}\nonumber\\
\lesssim&2^{sj}\norm{\triangle_{j}F((\partial_x^\alpha u)_{\abs{\alpha}\leq 3}, (\partial_x^\alpha\bar{u})_{\abs{\alpha}\leq 3})}_{L^{10/9}_{t,x}}\nonumber\\
&+2^{sj}2^{3j/2}\norm{\triangle_{j}F((\partial_x^\alpha
u)_{\abs{\alpha}\leq 3},
(\partial_x^\alpha\bar{u})_{\abs{\alpha}\leq
3})}_{L^{1}_{x}L^{2}_{t}}=I+II.\label{FJ}
\end{align}
Now we perform the nonlinear estimates. By Lemma \ref{l72}, we have
\begin{align*}
I\lesssim &\sum_{m+1\leq\tilde{R}\leq
M+1}\bigg(\norm{u}_{\ell^{1,s}_{\triangle}(L^{10}_{t,x})}\norm{u}^{R_{0}-1}_{\ell^{1}_{\triangle}
(L^{(5(\tilde{R}-1)/4}_{t,x})}\prod_{i=1}^{3}\norm{\partial^{i}_{x}}^{R_{i}}_{\ell^{1}_{\triangle}
(L^{(5(\tilde{R}-1)/4}_{t,x})}\\
&+\norm{\partial_{x}u}_{\ell^{1,s}_{\triangle}(L^{10}_{t,x})}
\norm{\partial_{x}u}^{R_{1}-1}_{\ell^{1}_{\triangle}(L^{(5(\tilde{R}-1)/4}_{t,x})}
\prod_{i=0,2,3}\norm{\partial^{i}_{x}u}^{R_{i}}_{\ell^{1}_{\triangle}(L^{(5(\tilde{R}-1)/4}_{t,x})}\\
&+\norm{\partial^{2}_{x}u}_{\ell^{1,s}_{\triangle}(L^{10}_{t,x})}
\norm{\partial^{2}_{x}u}^{R_{2}-1}_{\ell^{1}_{\triangle}(L^{(5(\tilde{R}-1)/4}_{t,x})}
\prod_{i=0,1,3}\norm{\partial^{i}_{x}u}^{R_{i}}_{\ell^{1}_{\triangle}(L^{(5(\tilde{R}-1)/4}_{t,x})}\\
&+\norm{\partial^{3}_{x}u}_{\ell^{1,s}_{\triangle}(L^{10}_{t,x})}
\norm{\partial^{3}_{x}u}^{R_{3}-1}_{\ell^{1}_{\triangle}(L^{(5(\tilde{R}-1)/4}_{t,x})}
\prod_{i=0}^{2}\norm{\partial^{i}_{x}u}^{R_{i}}_{\ell^{1}_{\triangle}(L^{(5(\tilde{R}-1)/4}_{t,x})}\bigg)\\
\lesssim&\sum_{m+1\leq\tilde{R}\leq
M+1}\left(\sum_{i=0}^{3}\norm{\partial^{i}_{x}u}_{\ell^{1,s}_{\triangle}(L^{10}_{t,x})}\right)
\left(\sum_{i=0}^{3}\norm{\partial^{i}_{x}u}^{\tilde{R}-1}_{\ell^{1}_{\triangle}(L^{5(\tilde{R}-1)/4}_{t,x})}\right).
\end{align*}
By Sobolev imbedding theorem, we have
\begin{equation}
I\lesssim\sum_{m+1\leq\tilde{R}\leq
M+1}\left(\sum_{i=0,3}\norm{\partial^{i}_{x}u}_{\ell^{1,s}_{\triangle}(L^{10}_{t,x})}\right)\left(\sum_{i=0,3}
\norm{\partial^{i}_{x}u}^{\tilde{R}-1}_{\ell^{1}_{\triangle}(L^{5(\tilde{R}-1)/4}_{t,x})}\right).\label{1I}
\end{equation}
For any $m\leq \lambda\leq M$,  let
$\rho=\frac{1}{2}-\frac{16}{5\lambda}$. Observing that the following
inclusions hold:
\begin{equation}
L^{\infty}_{t}(\R,\dot{H}^{s_{\lambda}})\cap
L^{10}_{t}(\dot{H}^{s_{\lambda}}_{10})\subset
L^{5\lambda/4}_{t}(\R,\dot{H}^{s_{\lambda}})\subset
L^{5\lambda/4}_{x,t}\label{IB}.
\end{equation}
More precisely, we have
\begin{align}
\sum_{j\in\mathbb{Z}}\norm{\triangle_{j}u}_{L^{5\lambda/4}_{t,x}}\lesssim &\sum_{j\in\mathbb{Z}}\norm{\triangle_{j}u}_{L^{5\lambda/4}_{t}(\R,\dot{H}^{s_{\lambda}}_{\rho})}\nonumber\\
\lesssim &\sum_{j\in\mathbb{Z}}\norm{\triangle_{j}u}^{8/\lambda}_{L^{10}_{t}(\R,\dot{H}^{s_{\lambda}}_{10})}\norm{\triangle_{j}u}^{1-8/\lambda}_{L^{\infty}_{t}(\R,\dot{H}^{s_{\lambda}})}\nonumber\\
\lesssim
&\norm{u}^{8/\lambda}_{\ell^{1,s_{\lambda}}_{\triangle}(L^{10}_{x,t})}
\norm{u}^{1-8/\lambda}_{\ell^{1,s_{\lambda}}_{\triangle}(L^{\infty}_{t}L^{2}_{x})}.\label{1Ib}
\end{align}
Using \eqref{1Ib} and noticing that $s_{m}\leq s_{\tilde{R}-1}\leq
s_{M}<\tilde{s}_{M}$, for $i=0,3$, we have
\begin{align}
\norm{\partial^{i}_{x}u}^{\tilde{R}-1}_{\ell^{1}_{\triangle}(L^{5\lambda/4}_{t,x})}
\lesssim
&\norm{\partial^{i}_{x}u}^{8/\lambda}_{\ell^{1,s_{\lambda}}_{\triangle}(L^{10}_{x,t})}\norm{\partial^{i}_{x}u}^{1-8/\lambda}_{\ell^{1,s_{\lambda}}_{\triangle}(L^{\infty}_{t}L^{2}_{x})}
\lesssim \norm{u}^{\tilde{R}-1}_{X}.\label{1Ip}
\end{align}
Combining \eqref{1I} with \eqref{1Ip}, we have
\begin{equation}
I\lesssim\sum_{m+1\leq\tilde{R}\leq
M+1}\norm{u}^{\tilde{R}}_{X}.\label{IR}
\end{equation}
 Now we estimate the term $II$.  By
Lemma \ref{l72}, we have
\begin{align}
II\lesssim &\sum_{m+1\leq\tilde{R}\leq
M+1}\bigg(\norm{u}_{\ell^{1,s+3/2}_{\triangle}(L^{\infty}_{x}L^{2}_{t})}
\norm{u}^{R_{0}-1}_{\ell^{1}_{\triangle}(L^{\tilde{R}-1}_{x}L^{\infty}_{t})}\prod_{i=1}^{3}\norm{\partial^{i}_{x}}^{R_{i}}_{\ell^{1}_{\triangle}(L^{\tilde{R}-1}_{x}L^{\infty}_{t})}\nonumber\\
&+\norm{\partial_{x}u}_{\ell^{1,s+3/2}_{\triangle}(L^{\infty}_{x}L^{2}_{t})}
\norm{\partial_{x}u}^{R_{1}-1}_{\ell^{1}_{\triangle}(L^{\tilde{R}-1}_{x}L^{\infty}_{t})}\prod_{i=0,2,3}\norm{\partial^{i}_{x}u}^{R_{i}}_{\ell^{1}_{\triangle}(L^{\tilde{R}-1}_{x}L^{\infty}_{t})}\nonumber\\
&+\norm{\partial^{2}_{x}u}_{\ell^{1,s+3/2}_{\triangle}(L^{\infty}_{x}L^{2}_{t})}
\norm{\partial^{2}_{x}u}^{R_{2}-1}_{\ell^{1}_{\triangle}(L^{\tilde{R}-1}_{x}L^{\infty}_{t})}\prod_{i=0,1,3}\norm{\partial^{i}_{x}u}^{R_{i}}_{\ell^{1}_{\triangle}(L^{\tilde{R}-1}_{x}L^{\infty}_{t})}\nonumber\\
&+\norm{\partial^{3}_{x}u}_{\ell^{1,s+3/2}_{\triangle}(L^{\infty}_{x}L^{2}_{t})}
\norm{\partial^{3}_{x}u}^{R_{3}-1}_{\ell^{1}_{\triangle}(L^{\tilde{R}-1}_{x}L^{\infty}_{t})}\prod_{i=0}^{2}\norm{\partial^{i}_{x}u}^{R_{i}}_{\ell^{1}_{\triangle}(L^{\tilde{R}-1}_{x}L^{\infty}_{t})}\bigg)\nonumber\\
\lesssim &\sum_{m+1\leq\tilde{R}\leq
M+1}\left(\sum_{i=0}^{3}\norm{\partial^{i}_{x}u}_{\ell^{1,s+3/2}_{\triangle}(L^{\infty}_{x}L^{2}_{t})}\right)\left(\sum_{i=0}^{3}
\norm{\partial^{i}_{x}u}^{\tilde{R}-1}_{\ell^{1}_{\triangle}(L^{\tilde{R}-1}_{x}L^{\infty}_{t})}\right).\label{II1}
\end{align}
As before, we have
\begin{align}
II\lesssim &\sum_{m+1\leq\tilde{R}\leq M+1}\left(\sum_{i=0,3}
\norm{\partial^{i}_{x}u}_{\ell^{1,s+3/2}_{\triangle}(L^{\infty}_{x}L^{2}_{t})}\right)\left(\sum_{i=0,3}
\norm{\partial^{i}_{x}u}^{\tilde{R}-1}_{\ell^{1}_{\triangle}(L^{\tilde{R}-1}_{x}L^{\infty}_{t})}\right)\nonumber\\
\lesssim &\sum_{m+1\leq\tilde{R}\leq
M+1}\norm{u}_{X}\left(\sum_{i=0,3}
\norm{\partial^{i}_{x}u}^{\tilde{R}-1}_{\ell^{1}_{\triangle}(L^{m}_{x}L^{\infty}_{t}\cap L^{M}_{x}L^{\infty}_{t})}\right)\nonumber\\
\lesssim &\sum_{m+1\leq\tilde{R}\leq
M+1}\norm{u}^{\tilde{R}}_{X}.\label{IIR}
\end{align}
Collecting \eqref{FJ}, \eqref{1I}, \eqref{IR} and \eqref{IIR},  we
have
\begin{equation}
\norm{\mathscr{A}_{0}F((\partial_x^\alpha u)_{\abs{\alpha}\leq 3},
(\partial_x^\alpha\bar{u})_{\abs{\alpha}\leq
3})}_{X}\lesssim\sum_{m+1\leq\tilde{R}\leq
M+1}\norm{u}^{\tilde{R}}_{X}.\label{AR}
\end{equation}
By \eqref{1ct}, \eqref{1q} and \eqref{AR}, it follows
\begin{equation*}
\norm{\mathscr{T}u}_{X}\lesssim\norm{u_0}_{\dot{B}^{3+\tilde{s}_{M}}_{2,1}\cap\dot{B}^{s_{m}}_{2,1}}+\sum_{m+1\leq\tilde{R}\leq
M+1}\norm{u}^{\tilde{R}}_{X}.
\end{equation*}

\emph{Step 2.} We consider the case $m=8$. Recall that
$$
\norm{u}_{X}=\sum_{i=0,3}(\norm{\partial^{i}_{x}u}_{L^{\infty}_{t}L^{2}_{x}\cap
L^{10}_{t,x}}+\sup_{s_{9}\leq s\leq
\tilde{s}_{M}}\sum_{j\in\mathbb{Z}}|\!|\!|\partial^{i}_{x}u|\!|\!|_{s}).
$$
By \eqref{1str1}, \eqref{1str2} and \eqref{1str3}, we get
\begin{equation*}
\norm{S_{0}(t)u_0}_{X}\lesssim\norm{u_0}_{L^{2}}+\sup_{s_{9}\leq
s\leq
\tilde{s}_{M}}\sum_{i=0,3}\sum_{j\in\mathbb{Z}}2^{sj}\norm{\partial^{i}_{x}\triangle_{j}u_0}_{2}
\lesssim\norm{u_0}_{B^{3+\tilde{s}_{M}}_{2,1}}
\end{equation*}
We now estimate $\norm{\mathscr{A}_{0}F((\partial_x^\alpha
u)_{\abs{\alpha}\leq 3},
(\partial_x^\alpha\bar{u})_{\abs{\alpha}\leq 3})}_{X}$. By
Strichartz's and H\"{o}lder's inequality, we have
\begin{align}
&\norm{\mathscr{A}_{0}F((\partial_x^\alpha u)_{\abs{\alpha}\leq 3}, (\partial_x^\alpha\bar{u})_{\abs{\alpha}\leq 3})}_{L^{\infty}_{t}L^{2}_{x}\cap L^{10}_{x,t}}\nonumber\\
&\lesssim \sum_{9\leq\tilde{R}\leq M+1}\norm{F((\partial_x^\alpha u)_{\abs{\alpha}\leq 3}, (\partial_x^\alpha\bar{u})_{\abs{\alpha}\leq 3})}_{L^{10/9}_{t,x}}\nonumber\\
&\lesssim \sum_{9\leq\tilde{R}\leq M+1}\left(\sum_{i=0}^{3}\norm{\partial^{i}_{x}u}_{L^{10}_{x,t}}\right)\left(\sum_{i=0}^{3}\norm{\partial^{i}_{x}u}^{\tilde{R}-1}_{L^{5(\tilde{R}-1)/4}_{x,t}}\right)\nonumber\\
&\lesssim \sum_{9\leq\tilde{R}\leq M+1}\left(\sum_{i=0}^{3}\norm{\partial^{i}_{x}u}_{L^{10}_{x,t}}\right)\left(\sum_{i=0}^{3}\norm{\partial^{i}_{x}u}^{\tilde{R}-1}_{L^{10}_{x,t}\cap L^{5M/4}_{x,t}}\right)\nonumber\\
&\lesssim \sum_{9\leq\tilde{R}\leq
M+1}\left(\sum_{i=0}^{3}\norm{\partial^{i}_{x}u}_{L^{10}_{x,t}}\right)^{\tilde{R}}
+\sum_{9\leq\tilde{R}\leq M+1}\left(\sum_{i=0}^{3}\norm{\partial^{i}_{x}u}_{L^{10}_{x,t}}\right)\left(\sum_{i=0}^{3}\norm{\partial^{i}_{x}u}^{\tilde{R}-1}_{L^{5M/4}_{x,t}}\right)\nonumber\\
&\lesssim \sum_{9\leq\tilde{R}\leq
M+1}\left(\sum_{i=0,3}\norm{\partial^{i}_{x}u}_{L^{10}_{x,t}}\right)^{\tilde{R}}+\sum_{9\leq\tilde{R}\leq
M+1}\left(\sum_{i=0,3}\norm{\partial^{i}_{x}u}_{L^{10}_{x,t}}\right)\left(\sum_{i=0,3}\norm{\partial^{i}_{x}u}^{\tilde{R}-1}_{L^{5M/4}_{x,t}}\right).\label{II2}
\end{align}
Applying \eqref{1Ib} and \eqref{II2}, it implies that
\begin{equation}
\norm{\mathscr{A}_{0}F((\partial_x^\alpha u)_{\abs{\alpha}\leq 3},
(\partial_x^\alpha\bar{u})_{\abs{\alpha}\leq
3})}_{L^{\infty}_{t}L^{2}_{x}\cap
L^{10}_{x,t}}\lesssim\sum_{9\leq\tilde{R}\leq
M+1}\norm{u}^{\tilde{R}}_{X}.\label{II3}
\end{equation}
Using Bernstein's inequality and \eqref{1str7}, it follows that
\begin{align*}
&\norm{\partial^{3}_{x}\mathscr{A}_{0}F((\partial_x^\alpha u)_{\abs{\alpha}\leq 3}, (\partial_x^\alpha\bar{u})_{\abs{\alpha}\leq 3})}_{L^{\infty}_{t}L^{2}_{x}\cap L^{10}_{x,t}}\nonumber\\
\leq&\norm{P_{\leq1}(\mathscr{A}_{0}\partial^{3}_{x}F((\partial_x^\alpha u)_{\abs{\alpha}\leq 3}, (\partial_x^\alpha\bar{u})_{\abs{\alpha}\leq 3}))}_{L^{\infty}_{t}L^{2}_{x}\cap L^{10}_{x,t}}\nonumber\\
&+\norm{P_{>1}(\mathscr{A}_{0}\partial^{3}_{x}F((\partial_x^\alpha u)_{\abs{\alpha}\leq 3}, (\partial_x^\alpha\bar{u})_{\abs{\alpha}\leq 3}))}_{L^{\infty}_{t}L^{2}_{x}\cap L^{10}_{x,t}}\nonumber\\
\lesssim&\norm{\mathscr{A}_{0}F((\partial_x^\alpha u)_{\abs{\alpha}\leq 3}, (\partial_x^\alpha\bar{u})_{\abs{\alpha}\leq 3})}_{L^{\infty}_{t}L^{2}_{x}\cap L^{10}_{x,t}}\nonumber\\
&+\sum_{j\geq1}2^{3j/2}\norm{\triangle_{j}F((\partial_x^\alpha u)_{\abs{\alpha}\leq 3}, (\partial_x^\alpha\bar{u})_{\abs{\alpha}\leq 3})}_{L^{1}_{x}L^{2}_{t}}\nonumber\\
\lesssim&\norm{\mathscr{A}_{0}F((\partial_x^\alpha u)_{\abs{\alpha}\leq 3}, (\partial_x^\alpha\bar{u})_{\abs{\alpha}\leq 3})}_{L^{\infty}_{t}L^{2}_{x}\cap L^{10}_{x,t}}\nonumber\\
&+\sum_{j\in\mathbb{Z}}2^{\tilde{s}_{M}}2^{3j/2}\norm{\triangle_{j}F((\partial_x^\alpha
u)_{\abs{\alpha}\leq 3},
(\partial_x^\alpha\bar{u})_{\abs{\alpha}\leq
3})}_{L^{1}_{x}L^{2}_{t}}=III+IV.
\end{align*}
The estimates of $III$ and $IV$  have been given in \eqref{II3} and
\eqref{II1}, respectively. We have
\begin{equation*}
\norm{\partial^{3}_{x}\mathscr{A}_{0}F((\partial_x^\alpha
u)_{\abs{\alpha}\leq 3},
(\partial_x^\alpha\bar{u})_{\abs{\alpha}\leq
3})}_{L^{\infty}_{t}L^{2}_{x}\cap L^{10}_{x,t}}
\lesssim\sum_{9\leq\tilde{R}\leq M+1}\norm{u}^{\tilde{R}}_{X}.
\end{equation*}
We have from \eqref{1str1}-\eqref{1str3} and
\eqref{1str7}-\eqref{1str9} that
\begin{align}
&\sum_{j\in\mathbb{Z}}|\!|\!|\triangle_{j}(\mathscr{A}_{0}F((\partial_x^\alpha u)_{\abs{\alpha}\leq 3}, (\partial_x^\alpha\bar{u})_{\abs{\alpha}\leq 3}))|\!|\!|_{s}\nonumber\\
\lesssim&\sum_{j\in \mathbb{Z}}2^{sj}\norm{\triangle_{j}F((\partial_x^\alpha u)_{\abs{\alpha}\leq 3}, (\partial_x^\alpha\bar{u})_{\abs{\alpha}\leq 3})}_{L^{10/9}_{x,t}},\label{II4}\\
&\sum_{j\in\mathbb{Z}}|\!|\!|\triangle_{j}(\mathscr{A}_{0}\partial^{3}_{x}F((\partial_x^\alpha u)_{\abs{\alpha}\leq 3}, (\partial_x^\alpha\bar{u})_{\abs{\alpha}\leq 3}))|\!|\!|_{s}\nonumber\\
\lesssim&\sum_{j\in
\mathbb{Z}}2^{3j/2}2^{sj}\norm{\triangle_{j}F((\partial_x^\alpha
u)_{\abs{\alpha}\leq 3},
(\partial_x^\alpha\bar{u})_{\abs{\alpha}\leq
3})}_{L^{1}_{x}L^{2}_{t}},\label{II5}
\end{align}
holds for all $s>0$. The right hand side in \eqref{II5} has been
estimated by \eqref{II1}.  Thus, it suffices to consider the
estimate of the right hand side in \eqref{II4}. let us observe the
equality
\begin{align*}
&F((\partial_x^\alpha u)_{\abs{\alpha}\leq 3}, (\partial_x^\alpha\bar{u})_{\abs{\alpha}\leq 3})\nonumber\\
=&\sum_{\tilde{R}=9}\lambda_{R_{0}R_{1}R_{2}R_{3}}u^{R_0}u^{R_{1}}_{x}u^{R_2}_{xx}u^{R_3}_{xxx}+\sum_{9<\tilde{R}\leq M+1}\lambda_{R_{0}R_{1}R_{2}R_{3}}u^{R_0}u^{R_{1}}_{x}u^{R_2}_{xx}u^{R_3}_{xxx}\nonumber\\
=& V+VI.
\end{align*}
For any $s_{9}\leq s\leq \tilde{s}_{M}$, $VI$ has been handled in
\eqref{IB}-\eqref{IR}:
\begin{equation*}
\sum_{9<\tilde{R}\leq M+1}\sum_{j\in
\mathbb{Z}}2^{sj}\norm{\triangle_{j} F((\partial_x^\alpha
u)_{\abs{\alpha}\leq 3},
(\partial_x^\alpha\bar{u})_{\abs{\alpha}\leq 3})}_{L^{10/9}_{x,t}}
\lesssim\sum_{9<\tilde{R}\leq M+1}\norm{u}^{\tilde{R}}_{X}.
\end{equation*}
For the estimate of $V$, we use Remark \ref{rmk7.3} and get for any
$s_{9}\leq s\leq\tilde{s}_{M}$,
\begin{align*}
\sum_{\tilde{R}=9}\sum_{j\in
\mathbb{Z}}2^{sj}\norm{\triangle_{j}(u^{R_0}u^{R_{1}}_{x}u^{R_2}_{xx}u^{R_3}_{xxx})}\lesssim
&\left(\sum_{i=0}^{3}\norm{\partial^{i}_{x}u}^{8}_{L^{10}_{x,t}}\right)\left(\sum_{i=0}^{3}\norm{\partial^{i}_{x}u}_{\ell^{1,s}_{\triangle}(L^{10}_{x,t})}\right)
\lesssim \norm{u}^{9}_{X}.
\end{align*}
Summating the estimates above, we obtain
\begin{equation*}
\norm{\mathscr{T}u(t)}_{X}\lesssim\norm{u_0}_{B^{3+\tilde{s}_{M}}_{2,1}}+\sum_{9\leq\tilde{R}\leq
M+1}\norm{u}^{\tilde{R}}_{X}.
\end{equation*}
Hence, we have the results, as desired.
\end{proof}

\section*{Acknowledgments}
The author would like to express his great thanks to Professor
Baoxiang Wang and Doctor Chenchun Hao for their valuable suggestions
and frequent encouragement during every stage of this work. The
author was partially supported by the National Natural Science
Foundation of China (grants numbers 10571004 and 10621061), the 973
Project Foundation of China, grant number 2006CB805902.

\end{document}